\newcommand{\A}{\mathbb{A}}
\newcommand{\B}{\mathbb{B}}
\newcommand{\C}{\mathbb{C}}
\newcommand{\G}{\mathbb{G}}
\newcommand{\N}{\mathbb{N}}
\renewcommand{\P}{\mathbb{P}}
\newcommand{\Q}{\mathbb{Q}}
\newcommand{\R}{\mathbb{R}}
\renewcommand{\S}{\mathbb{S}}
\newcommand{\Z}{\mathbb{Z}}
\newcommand{\fk}{\mathfrak{k}}
\newcommand{\fs}{\mathfrak{s}}
\newcommand{\ft}{\mathfrak{t}}
\newcommand{\cB}{\mathcal{B}}
\newcommand{\cD}{\mathcal{D}}
\newcommand{\cE}{\mathcal{E}}
\newcommand{\cH}{\mathcal{H}}
\newcommand{\cJ}{\mathcal{J}}
\newcommand{\cL}{\mathcal{L}}
\newcommand{\cO}{\mathcal{O}}
\newcommand{\cR}{\mathcal{R}}
\newcommand{\cX}{\mathcal{X}}
\renewcommand{\a}{\alpha}
\renewcommand{\b}{\beta}
\renewcommand{\d}{\delta}
\newcommand{\e}{\varepsilon}
\newcommand{\s}{\sigma}
\renewcommand{\phi}{\varphi}
\newcommand{\eg}{{\rm e.g.\ }} 
\newcommand{\ie}{{\rm i.e.\ }}
\renewcommand{\leq}{\leqslant}
\renewcommand{\geq}{\geqslant}
\newcommand{\abs}[1]{\left\lvert#1\right\rvert}
\newcommand{\norm}[1]{\left\|#1\right\|}
\renewcommand{\DH}{\mathrm{DH}}
\newcommand{\NA}{\mathrm{NA}}
\DeclareMathOperator{\Aut}{Aut}
\DeclareMathOperator{\Hom}{Hom}
\DeclareMathOperator{\id}{id}
\DeclareMathOperator{\IM}{Im}
\DeclareMathOperator{\lct}{lct}
\DeclareMathOperator{\MA}{MA}
\DeclareMathOperator{\ord}{ord}
\DeclareMathOperator{\PSH}{PSH}
\DeclareMathOperator{\RE}{Re}
\DeclareMathOperator{\Ric}{Ric}
\DeclareMathOperator{\supp}{supp}
\DeclareMathOperator{\Wedge}{\bigwedge}
\numberwithin{equation}{section}       
\newtheorem{prop} {Proposition} [section]
\newtheorem{thm}[prop] {Theorem} 
\newtheorem{dfn}[prop] {Definition}
\newtheorem{lem}[prop] {Lemma}
\newtheorem{cor}[prop]{Corollary}
\newtheorem{exam}[prop]{Example}
\newtheorem{rem}[prop]{Remark}
\theoremstyle{remark}
\newtheorem*{ackn}{\bf{Acknowledgment}}
\newtheorem*{thmA}{\bf{Theorem A}} 
\newtheorem*{thmB}{\bf{Theorem B}}
\newtheorem*{dfn*}{\bf{Definition}}
\title[]{Mabuchi's soliton metric and relative D-stability} 
\date{\today} 
\author{Tomoyuki Hisamoto}
\address{Graduate School of Mathematics\\
  Nagoya University\\
  Furocho\\
  Chikusa\\
  Nagoya\\ 
  Japan}
\email{hisamoto@math.nagoya-u.ac.jp}
\begin{document}

\maketitle

\setcounter{tocdepth}{1}

\begin{abstract}
For Fano manifolds T. Mabuchi introduced a generalization of the K\"ahler-Einstein metric, which is characterized as the critical point of the Ricci-Calabi functional. 
We show that a Fano manifold admits Mabuchi's metric if and only if it is uniformly relatively D-stable. 
The idea of the proof includes some equivariant generalization of the recent developed variational approach to the K\"ahler-Einstein problem. 
\end{abstract}

\tableofcontents 

\section{Introduction}

Let $X$ be a Fano manifold. 
In a central problem of complex geometry we are guided to look for a standard K\"ahler metric in the first Chern class $c_1(X)=c_1(-K_X)$. 
The fundamental result established in \cite{CDS15} states that there exists a K\"ahler-Einstein metric if and only if $X$ is K-polystable (see also \cite{Tian15}). 
Not all the Fano manifold satisfy the stability; for example one-point blow up of $\P^2$ is never K\"ahler-Einstein. 
On the other hand, for an {\em arbitrary} Fano manifold $X$ we may consider a canonical geometric flow which should optimally destabilize $X$. 
The self-similar solution of the flow coincides with T. Mabuchi's generalization of K\"ahler-Einstein metric. 
The purpose of this paper is to clarify which Fano manifold admits such a metric. 

For the definition, let us denote a K\"ahler metric by $\omega$ and the normalized Ricci potential function by $\rho$ which is the unique function satisfies 
\begin{equation}\label{Ricci potential}
\Ric \omega -\omega = dd^c\rho, ~~~
\int_X (e^\rho -1) \omega^n=0. 
\end{equation}
We also write $\omega = dd^c\phi$ locally so as to identify the metric with a collection of smooth functions $\phi$ patching together to define the fiber metric of $-K_X$.  
Our standard metric first introduced by \cite{Mab01} is the critical point of the Ricci-Calabi functional 
\begin{equation}\label{Ricci-Calabi functional}
R(\omega)=R(\phi) := \frac{1}{V}\int_X (e^\rho-1)^2 \omega^n. 
\end{equation}
Here the volume $V= \int_X \omega^n$ is independent of $\omega$. 
The straightforward variational computation shows that the metric $\omega$ is a critical point iff $e^\rho-1$ is the Hamilton function for some one-parameter subgroup $\eta\colon \G_m \to \Aut(X, -K_X)$. 
It is also clear from the definition that the condition gives the Ricci-analogue of the extremal K\"ahler metric defined in terms of the classical Calabi functional. 
These two metrics are not the same while the above $\eta$ is generated by the extremal vector field. 
There as well exists the infinite-dimensional GIT picture \cite{Don15} so that the Ricci-Calabi functional can be seen as the square norm of a certain moment map.  
The role of the Kemp-Ness functional in GIT is then played by the famous D-energy 
\begin{equation*}
D(\phi) = 
-\log \frac{1}{V}\int_X e^{-\phi}
-\frac{1}{(n+1)V}\sum_{i=0}^{n} \int_X (\phi-\phi_0) \omega^i\wedge \omega_0^{n-i}. 
\end{equation*}
The definition of the D-energy first appeared in \cite{BM86} and was written down to this form by \cite{Din88}. 
The gradient flow 
\begin{equation*}
\frac{\partial}{\partial t} \phi = 1-e^\rho  
\end{equation*}
was initially studied in our previous work \cite{CHT17}. 
In \cite{His19}, \cite{Xia19} it was shown that the flow indeed minimizes $R(\phi)$ and is naturally related with the optimal degeneration of the Fano manifold. 
From now on we call the pair of the critical point of $R(\phi)$ and the one-parameter subgroup {\em Mabuchi soliton}, since it is characterized as the self-similar solution of the flow.  

Our main result claims that the existence of Mabuchi soliton is equivalent to certain algebraic stability condition. 
It extends the result of \cite{Yao17}, \cite{Nak17} for the toric case to general Fano manifolds. 
Our approach precisely follows \cite{BBJ15} where they give a new variational proof of \cite{CDS15} for a Fano manifold with finite automorphism group. 
Our first version of the preprint and \cite{Li19} extended the result to general automorphism (with $\eta=0$) case. 

\begin{thmA}
A Fano manifold $X$ admits a Mabuchi soliton if and only if it is uniformly relatively D-stable, with respect to the equivariant test configurations. 
\end{thmA}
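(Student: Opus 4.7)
The plan is to follow the variational strategy of \cite{BBJ15}, adapted to the relative (equivariant) setting by introducing a twisted Ding functional whose critical points are Mabuchi solitons. Writing $\theta_\eta$ for the normalized $\omega$-Hamiltonian of $\eta$, the soliton equation reads $e^\rho - 1 = \theta_\eta$; one constructs a twisted functional $D^\eta$ on $\eta$-invariant K\"ahler potentials whose Euler--Lagrange equation is this equation and which is invariant under the flow of $\eta$. By Berndtsson's subharmonicity theorem applied to $D$, combined with the linearity of the $\theta_\eta$-correction (obtained by coupling $\theta_\eta$ with the Monge--Amp\`ere energy), $D^\eta$ is convex along weak geodesics in the space $\mathcal{E}^1_\eta$ of $\eta$-invariant finite-energy potentials. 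On the non-Archimedean side, one similarly defines a twisted Ding invariant $D^{\NA,\eta}$ on equivariant test configurations $\cX$, so that uniform relative D-stability is the inequality $D^{\NA,\eta}(\cX)\geq \delta J^{\NA}(\cX)$ for some $\delta > 0$ and every nontrivial equivariant $\cX$, normalized transversely to the one-parameter subgroup generated by $\eta$.

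For the easy direction (soliton $\Rightarrow$ stability), given a Mabuchi soliton $\phi_{MS}$ and an equivariant test configuration $\cX$, form the Phong--Sturm weak geodesic ray $(\phi_t)_{t\geq 0}$ from $\phi_{MS}$ determined by $\cX$. Convexity of $D^\eta$ along $(\phi_t)$ and the minimizing property of $\phi_{MS}$ within $\mathcal{E}^1_\eta$ yield the nonnegativity of the right derivative at $t=0$. The asymptotic slope formula of \cite{BHJ17}, adjusted for the $\theta_\eta$-term, identifies this slope with $D^{\NA,\eta}(\cX)$, while the analogous formula for $J$ recovers $J^{\NA}(\cX)$. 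A standard perturbation argument --- replacing $D^\eta$ by $D^\eta - \delta J$ and re-running the slope computation --- upgrades this nonnegativity to the coercivity $D^{\NA,\eta}(\cX)\geq \delta J^{\NA}(\cX)$.

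For the hard direction (stability $\Rightarrow$ soliton), take an $\eta$-invariant minimizing sequence $(\phi_j)$ for $D^\eta$, normalized so as to lie on a slice transverse to the $\exp(\R\eta)$-orbit in $\mathcal{E}^1_\eta$. If no subsequence is $d_1$-precompact then $J(\phi_j) \to \infty$; using Darvas' $L^1$-Finsler geometry one constructs a unit-speed weak geodesic ray $(\phi_t)$ capturing the divergent direction of $(\phi_j)$ and admitting a non-Archimedean limit $\Phi$ which is an $\eta$-invariant finite-energy non-Archimedean psh function with $J^{\NA}(\Phi)=1$. Convexity of $D^\eta$ along $(\phi_t)$ combined with the minimizing property forces the asymptotic slope to be $\leq 0$, which together with a slope inequality $\lim_{t\to\infty} D^\eta(\phi_t)/t \leq D^{\NA,\eta}(\Phi)$ gives $D^{\NA,\eta}(\Phi)\leq 0$. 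Approximating $\Phi$ by equivariant test configurations $\cX_k$ with $D^{\NA,\eta}(\cX_k) \to D^{\NA,\eta}(\Phi)$ and $J^{\NA}(\cX_k) \to 1$ contradicts uniform relative D-stability. Hence $(\phi_j)$ has a $d_1$-cluster point $\phi_\infty \in \mathcal{E}^1_\eta$ minimizing $D^\eta$, and Berman's Moser iteration for the soliton PDE (now with the smooth Hamiltonian correction $\theta_\eta$) lifts $\phi_\infty$ to a smooth Mabuchi soliton.

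The main obstacle lies in the non-Archimedean step of the hard direction: establishing both (i) the sharp slope formula $\lim_{t\to\infty} D^\eta(\phi_t)/t \leq D^{\NA,\eta}(\Phi)$ for arbitrary $\eta$-invariant finite-energy rays, with the archimedean $\theta_\eta$-correction matching its algebraic counterpart in $D^{\NA,\eta}$; and (ii) an equivariant approximation of an $\eta$-invariant non-Archimedean finite-energy potential $\Phi$ by honest equivariant test configurations, preserving the $\eta$-action and yielding convergence of both $D^{\NA,\eta}$ and $J^{\NA}$. The analogue in the absolute case \cite{BBJ15} rests on a Li--Xu type approximation of non-Archimedean psh functions, and the present theorem requires an equivariant refinement of that tool, which is precisely where the \emph{relative} adjective becomes essential. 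Auxiliary difficulties --- constructing a slice transverse to $\exp(\R\eta)$ inside $\mathcal{E}^1_\eta$, and verifying the regularity of energy minimizers of $D^\eta$ --- should follow from equivariant versions of standard arguments, but the non-Archimedean approximation issue is the genuine heart of the proof.
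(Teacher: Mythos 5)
Your overall architecture (convexity of a modified Ding functional, destabilizing geodesic rays, Demailly/multiplier-ideal approximation by equivariant test configurations, slope formulas) is the right one, but there is a genuine gap in how you handle the gauge group: you normalize only transversely to the one-parameter subgroup $\exp(\R\eta)$, both in the definition of the stability inequality and in the choice of the minimizing sequence. The correct normalization is modulo the \emph{entire} central torus $T=C(\Aut^0(X,\eta))$, i.e.\ one must use $J_T(\phi)=\inf_{\s\in T}J(\s^*\phi)$ and its non-Archimedean counterpart $J_T^\NA(\cX,\cL)=\inf_{\mu\in N_\R}J^\NA(\cX_\mu,\cL_\mu)$, where the infimum runs over twists by \emph{all} one-parameter subgroups of $T$. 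With your normalization the theorem is false as stated: for a toric Fano with $\Aut^0(X,\eta)$ equal to the torus, every product configuration generated by $\mu\in N$ has $D_\eta^\NA(\mu)=F(\mu)-\langle\mu,\eta\rangle=0$ while the component of $\mu$ transverse to $\eta$ contributes positively to $J^\NA$, so no manifold with extra central symmetry could be ``uniformly stable'' in your sense even when it carries a soliton. Restoring the full torus is precisely where the hardest equivariant work sits, none of which appears in your outline: one must show the infimum over twists is attained at a rational $\mu_m$ (Lemma \ref{rationality}), that the optimal twists $\mu_m$ of the approximating test configurations stay bounded (Lemma \ref{boundedness}), and that the resulting twisted limit ray is nontrivial (Lemma \ref{non-triviality}); the last step uses that the negation of coercivity is quantified over \emph{all} $\s\in T$, information your ``minimizing sequence on a slice transverse to $\exp(\R\eta)$'' discards.

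The same issue breaks your easy direction. Replacing $D^\eta$ by $D^\eta-\delta J$ and ``re-running the slope computation'' cannot work when $\Aut^0(X,\eta)$ is positive-dimensional: $D_\eta$ is constant on the $T$-orbit of the soliton while $J$ blows up along it, so $D_\eta-\delta J$ is unbounded below and is certainly not minimized at the soliton. The coercivity one actually gets is $D_\eta\geq\e J_T-C$, and deriving it from existence of the metric is not a perturbation argument but the Darvas--Rubinstein properness principle (Theorem \ref{metric vs coercivity}), which in turn requires uniqueness of the soliton modulo $T$ and reductivity of $\Aut^0(X,\eta)$; converting that coercivity into the non-Archimedean inequality then needs the slope formula for $J_T$ (Theorem \ref{slope of J_T}), which is delicate because the optimal $\s\in T$ depends on $t$ along the ray. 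Two smaller omissions: a minimizer over $K$-invariant potentials a priori satisfies the Euler--Lagrange equation only against $K$-invariant test functions, and one needs the vanishing of the Futaki character outside the center plus averaging over $K$ to conclude it is a genuine weak solution (Theorem \ref{minimizer is a weak solution}); and the regularity of the weak soliton requires the numerical positivity $m_X=\inf_P(1+\langle\eta,\cdot\rangle)>0$, which must be built into the stability hypothesis.
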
 

If the extremal vector field is zero \ie $\eta=0$, we obtain the existence result of K\"ahler-Einstein metric, with no restriction for the automorphism group. 
To obtain the result we develop the equivariant formulation which was suggested in \cite{DS16}, \cite{His18}. 
One direction deriving stability is based on Theorem \ref{slope of J_T} which was proved in \cite{His18}.  
In the equivariant setting to obtain the metric one needs special discussion particularly in Lemma \ref{rationality} and Lemma \ref{boundedness}, to control the test configurations twisted by one-parameter subgroups. 
Also we need to be careful for the equivariant formalism to confirm that the minimizer of the energy is a weak solution. This is concentrated in Theorem \ref{minimizer is a weak solution}. 
The first version of our preprint however had a serious error in the final step of the proof and this was pointed out and solved by \cite{Li19}, particularly when $\eta=0$. 
In the present version we follow \cite{Li19} in this respect. 
This corresponds to the discussion around Lemma \ref{boundedness} and \ref{non-triviality}. 
We here also serve another simple proof of Lemma \ref{boundedness}. 
We hope that we could still contribute to the formalism of the problem and further extend our scope to the Mabuchi soliton. 

The required stability condition is introduced in Definition \ref{relative uniform D-stability}.
The concept of D-stability originates from \cite{Berm16}. 
As the K-stability introduced by \cite{Don02} naturally arises from the Calabi functional and the K-energy, D-stability arises from the above Ricci-Calabi functional and the D-energy. 
Uniformity of the stability was introduced in our previous work \cite{BHJ15} and \cite{Der16a} independently. 
In regard to the torus containing the soliton vector field we may also formulate the relative version of the D-stability, especially based on the author's previous work \cite{His16a} \cite{His16b}, and \cite{His18}. 
Putting these together we formulate the {\em uniform relative stability} which reflects the coercivity of the modified D-energy. 
In fact it was shown by \cite{LZ17} (in a different formulation) that the relevant coercivity is equivalent to the existence of Mabuchi soliton. 
If we derive the coercivity from the stability, the uniformity is critical in controlling the sequence of test configurations. 
The relative consideration of the energies relies on \cite{BWN14}. 
Although they were mainlsymmetricy concerned with the K\"ahler-Ricci soliton the techniques are valid for the general situations including the present case. 


One remarkable point clarified in this paper is that we may restrict ourselves to test configurations equivariant for the {\em whole} automorphism group, in showing the existence of the metric. 
This fact owes to Theorem \ref{minimizer is a weak solution}, where we used the fact that the Futaki invariant is trivial outside of the center. 

Unlike K-stability, D-stability works only for Fano manifolds, however, as Theorem A and its proof show, the treatment is much easier. 
Existence of extremal K\"ahler metric is still open problem, even for the anti-canonical polarizations. 
A simple argument shows that the Mabuchi soliton assures the extremal K\"ahler metric. 
A new circumstance in the relative setting is that the two metrics are in fact not equivalent. 
The first counterexample is raised in the latest version of \cite{NSY17}. 

Compared with K\"ahler-Ricci soliton, Mabuchi soliton has in some sense more algebraic nature. For example the soliton vector field is periodic and actually generates $\eta$.  
On the other hand, the gradient flow is not so flexible as the K\"ahler-Ricci flow. 
In addition toric examples in \cite{NSY17} is in contrast to the result of \cite{WZ04}. 

Along the variational approach we may naturally understand the uniqueness of Mabuchi soliton. 

\begin{thmB}[\cite{Mab03}, Theorem C]
Let $(\omega_0, \eta_0)$ and $(\omega_1, \eta_1)$ be smooth Mabuchi solitons. 
Then there exists an automorphism $f \in \Aut^0(X)$ in the identity component such that 
$f^* \omega_1 =\omega_0$, $f^*\eta_0 = \eta_1$. 
\end{thmB}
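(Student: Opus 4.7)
The plan is to adapt the Berman--Berndtsson variational proof of uniqueness for K\"ahler--Einstein metrics to the modified D-energy whose critical points are the Mabuchi solitons, along the lines of the K\"ahler--Ricci soliton argument in \cite{BWN14}. First I would reduce to the case where the soliton vector fields coincide: in any Mabuchi soliton $(\omega,\eta)$ the vector field $\eta$ has normalized Hamiltonian $e^\rho-1$ and is therefore determined by the Futaki character of $X$, up to $\Aut^0(X)$-conjugation. Hence, after replacing $(\omega_1,\eta_1)$ by $f_0^*(\omega_1,\eta_1)$ for a suitable $f_0\in\Aut^0(X)$, I may assume $\eta_0=\eta_1=:\eta$ and that both metrics are invariant under the compact torus $T\subset\Aut^0(X)$ generated by $\eta$.

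On the space $\mathcal{H}^T$ of $T$-invariant smooth K\"ahler potentials in $c_1(X)$, I would then work with the modified D-energy $D_\eta$, obtained from $D$ by adding the equivariant Monge--Amp\`ere twist by the Hamiltonian of $\eta$, so that its Euler--Lagrange equation on $\mathcal{H}^T$ is exactly the Mabuchi soliton equation $e^\rho-1=h_\eta$. By the equivariant extension of the Berman--Berndtsson convexity theorem (as exploited in \cite{BWN14}), $D_\eta$ is convex along the $C^{1,1}$ weak geodesics in $\mathcal{H}^T$. Choose $T$-invariant potentials $\phi_0,\phi_1$ for $\omega_0,\omega_1$ and join them by the unique such geodesic $\phi_t$, which is $C^{1,1}$ by Chen's theorem. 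Since both endpoints are smooth Mabuchi solitons, and hence minimize $D_\eta$, convexity forces $t\mapsto D_\eta(\phi_t)$ to be constant on $[0,1]$, and the rigidity in the equality case produces a real holomorphic vector field $V$ commuting with $\eta$ such that $\phi_t=\exp(tV)^*\phi_0$. Setting $f=\exp(V)\in\Aut^0(X)$ gives $f^*\omega_1=\omega_0$, and $[V,\eta]=0$ yields $f^*\eta_0=\eta_1$.

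The main obstacle is this last rigidity step. The standard tool is Berndtsson's strict subharmonicity of the Ding term on the space of positive metrics of $-K_X$, combined with the $C^{1,1}$-regularity of $\phi_t$ at its smooth endpoints: equality in the convexity inequality forces $\dot\phi_t$ to be pluriharmonic along the leaves of the Monge--Amp\`ere foliation, and this object must then be integrated to a global holomorphic vector field. Carrying out this analysis $T$-equivariantly requires care so that the resulting vector field commutes with $\eta$, which is precisely what ensures that the automorphism $f$ intertwines $\eta_0$ and $\eta_1$ at the end.
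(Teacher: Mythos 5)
Your proposal is correct and follows essentially the same route as the paper: reduce to $\eta_0=\eta_1$ by conjugacy of maximal tori and uniqueness of the extremal field, join $T$-invariant potentials by a bounded weak geodesic, use that both endpoints minimize the convex functional $D_\eta$ so that $L(\phi^t)$ is affine, and invoke Berndtsson's rigidity theorem (\cite{Bern11}, Theorem 1.2, quoted as Proposition \ref{holomorphic vector field}) to produce the holomorphic vector field generating the connecting automorphisms, checking at the end that they preserve $\eta$. The only cosmetic difference is that the paper verifies $f_t^*\eta=\eta$ a posteriori via $w=(f_t)_*v-v$ and $dd^ch_w=0$ rather than building $T$-equivariance into the rigidity step.
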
 

Our argument also gives a new proof of the Matsushima-type theorem in \cite{Mab03}, \cite{Nak18}. 
Namely, if a Fano manifold admits the Mabuchi soliton, the identity component of the group of automorphism preserving the extremal vector field is reductive.  
These uniqueness and the reductivity are key materials for the derivation of the coercivity from existence of the metric. 


\vspace{5mm}
\begin{ackn}
The author express his gratitude to Professor C. Li, for his kind communication especially pointing out the serious error in our first version of the preprint.  
The author wishes to thank Professor R.~Berman, S.~Boucksom, and M.~Jonsson for very fruitful discussions. 
Especially for the equivariant formulation he learned a lot from the three professors, as in our previous paper \cite{His18}. 
We are also grateful to E. Inoue, S. Saito and R. Takahashi for helpful comments. 
This research was supported by JSPS KAKENHI Grant Number 15H06262 and 17K14185. 
\end{ackn}


\section{Mabuchi soliton and modified D-energy} 

\subsection{Notation}\label{notation}

Throughout the paper $X$ denotes an $n$-dimensional Fano manifold and a K\"ahler metric $\omega$ is taken in the first Chern class $c_1(X)$. 
We adopt the additive notation writing the anti-canonical bundle as $-K_X$ and the fiber metric as $\phi$. 
While we do not fix a specific covering $\{ U_\a\}_\a$ of local coordinate patches $U_\a$ with index $\a$, the symbol $\phi$ is interpreted to a function $\phi_\a$ on each $U_\a \subset X$.  
In a local frame any section $s$ of $-K_X$ is identified with a function $s_\a$ and it is evaluated by the multiplication of $e^{-\phi_\a}$ to $\abs{s_\a}^2$.  
On the intersection $U_\a \cap U_\b $ for two indicies $\a, \b$ and coordinates $z_\a^i, z_\b^j$ $1 \leq i, j \leq n$ the transition function is written to $g_{\a \b} = \det{\big[\frac{\partial z_\a^i}{\partial z_\b^j}\big]_{ij}}$ and the compatibility $\phi_\a = \phi_\b +\log \abs{g_{\a \b}}^2$ holds. 
If we put $d^c = \frac{\partial-\bar{\partial}}{4\pi\sqrt{-1}}$, it follows that the Chern curvature $\omega_\phi=dd^c\phi$ is globally well-defined. 
We set $\cH(X, -K_X)$ as the collection of smooth fiber metric $\phi$ on $-K_X$ such that $\omega_\phi$ is positive. 
By $dd^c$-Lemma, any metric $\omega$ in $c_1(X)$ equals to $\omega_\phi$ for some $\phi \in \cH(X, -K_X)$ which is unique up to addition of a constant. 
For this reason $\cH(X, -K_X)$ is called {\em the space of K\"ahler metrics}. 
We essentially need $\phi$ instead of $\omega=\omega_\phi$ in order to consider the action of the Hamilton diffeomorphism group.   

\subsection{Ricci curvature formulation}\label{Ricci curvature formulation} 
 
We briefly review some energy formulations to the K\"ahler-Einstein problem, which make use of the Ricci potential. 
There has been another (and probably major) scalar curvature formulation which works for a general polarized manifold. 
In terms of the scalar curvature one may introduce the Calabi functional and notion of K-stability observing the behavior of the K-energy along the degeneration of the manifold. 
See the milestone works \cite{Cal82}, \cite{Cal85}, \cite{Fut83}, \cite{Mab86}, \cite{Tian97}, and \cite{Don02}. 
D-energy which we will explain is as well classical but the determination of the corresponding D-stability \cite{Berm16} and the momentum map picture \cite{Don15} were rather recent. 

Let us start from defining two probability measures associated to a K\"ahler metric $\omega=\omega_\phi$, or equivalently $\phi \in \cH(X, -K_X)$. 
One is the Monge-Amp\`ere measure $V^{-1}\omega_\phi^n$. 
The other one which we call the {\em canonical measure} is special for the Fano case and defined to be 
\begin{equation}\label{canonical measure}
\mu_\phi := \frac{e^{-\phi}}{\int_X e^{-\phi}}, 
\end{equation}
where $e^{-\phi}$ denotes the global volume form described as $e^{-\phi_\a} \Wedge_{i=1}^n  dz_\a^i \wedge d\bar{z}_\a^i$ on a coordinate patch $U_\a$. 
Note that the metric is K\"ahler-Einstein iff it satisfies the Monge-Amp\`ere equation $V^{-1}\omega_\phi^n=\mu_\phi$. 
Therefore we are lead to focus on the difference of these two measures. 
In fact it precisely gives the infinite-dimensional moment map. 
Namely, once we regard a fixed K\"ahler metric $\omega$ as a symplectic form  and instead collect all the complex structures $J$ compatible with $\omega$, one may attach to each $J$  the measure 
\begin{equation}\label{infinite dimensional moment map}
J \mapsto 
\mu_\phi - V^{-1}\omega^n. 
\end{equation}
The group of Hamilton diffeomorphisms naturally acts on the complex structures.  
The Lie algebra of this group is naturally identified with smooth function space $C^\infty(X; \R)$ with Poisson bracket and hence the above defines a map to the dual Lie algebra. It indeed satisfies the moment map condition. 
More precisely, we should impose to $J$ the compatibility condition with the fiber metric $\phi$, but see \cite{Don15} for the detail explanation. 

The square norm of the moment map is written down to 
\begin{equation}\label{Ricci-Calabi functional}
R(\phi) = \frac{1}{V}\int_X (e^\rho-1)^2 \omega^n,  
\end{equation}
which we call the Ricci-Calabi functional. 
Our interest is the critical point of the Ricci-Calabi functional which gives a generalization of the K\"ahler-Einstein metric. 
The first variation of $R\colon \cH(X, -K_X) \to \R$ is given as follows. 
See also \cite{Nak18} for calculating the second variation. 

\begin{prop}[\cite{CHT17}, Proposition 2.3]
Set the twisted Laplacian on functions $f \in C^{\infty}(X,\mathbb{C})$  by
\begin{equation}\label{twisted Laplacian}
L_{\rho}f = \Delta_\omega f + (\bar{\partial} \rho, \bar{\partial} f)_{\omega}. 
\end{equation}
Then the first variation of the Ricci-Calabi functional is given as  
\begin{equation}\label{variation of Ricci-Calabi functional}
\delta R(\phi) = -\frac{2}{V} \int_{X} \delta \phi \left( L_{\rho}\tilde{f} + \tilde{f} \right) d\mu_\phi, 
\end{equation}
where 
\begin{equation*}
\tilde{f} = (e^{\rho}-1)  - \frac{1}{V}\int_{X}(e^{\rho}-1) d\mu_\phi. 
\end{equation*}
\end{prop}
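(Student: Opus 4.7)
The plan is a direct variational computation. Under the infinitesimal change $\phi\mapsto\phi+t\,\delta\phi$, one first records the standard identity $\delta(\omega^n)=\Delta_\omega(\delta\phi)\,\omega^n$, and the variation of the Ricci potential
\[
\delta\rho=-\Delta_\omega(\delta\phi)-\delta\phi+c,\qquad c=\int_X\delta\phi\,d\mu_\phi,
\]
obtained by differentiating $\Ric\omega-\omega=dd^c\rho$ together with the normalization $\int_X(e^\rho-1)\omega^n=0$. The constant $c$ is pinned down using the Fano identity $e^\rho\omega^n=V\,d\mu_\phi$, which itself follows by comparing $-dd^c\log\omega^n$ with the defining equation for $\rho$.

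Substituting both into
\[
\delta R=\frac{1}{V}\Bigl[2\!\int_X (e^\rho-1)\,e^\rho\,\delta\rho\,\omega^n+\!\int_X(e^\rho-1)^2\,\Delta_\omega(\delta\phi)\,\omega^n\Bigr]
\]
and moving the Laplacians off $\delta\phi$ by integration by parts, the $\Delta_\omega(\delta\phi)$-terms collapse, via the algebraic cancellation $(e^\rho-1)^2-2(e^\rho-1)e^\rho=1-e^{2\rho}$, into a single term proportional to $\int_X\delta\phi\,\Delta_\omega(e^{2\rho})\,\omega^n$. The decisive pointwise identity is then
\[
\Delta_\omega(e^{2\rho})=2\,e^\rho\,L_\rho(e^\rho),
\]
which one checks by direct expansion (both sides equal $2e^{2\rho}(\Delta_\omega\rho+2|\bar\partial\rho|_\omega^2)$). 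Since $L_\rho$ annihilates constants, this equals $2e^\rho L_\rho(e^\rho-1)$, which is precisely what makes the twisted Laplacian appear naturally.

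Writing $f=e^\rho-1$, the $\Delta_\omega(\delta\phi)$-contribution thereby becomes a pairing of $\delta\phi$ against $L_\rho f+f$ weighted by $e^\rho\omega^n=V\,d\mu_\phi$. The remaining term, originating from the constant $c$ in $\delta\rho$, equals up to a factor $\bigl(\int\delta\phi\,d\mu_\phi\bigr)\bigl(\int f\,d\mu_\phi\bigr)$, and this is exactly what converts the $f$ inside the integrand into its mean-free version $\tilde f=f-\int f\,d\mu_\phi$ (using $L_\rho f=L_\rho\tilde f$). Reassembling the pieces produces the stated formula.

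The main obstacle I anticipate is bookkeeping the normalizations: correctly identifying $c(\delta\phi)$ with an integral against $d\mu_\phi$, converting all integrals from $\omega^n$ to $d\mu_\phi$ via $e^\rho\omega^n=V\,d\mu_\phi$, and verifying that the leftover term produced by $c$ matches the mean-subtraction in $\tilde f$. A convenient sanity check along the way is that $\delta R$ must vanish on constant $\delta\phi$, since $R$ depends only on the metric $\omega=dd^c\phi$; this forces the mean of $L_\rho\tilde f+\tilde f$ against $d\mu_\phi$ to be zero and so calibrates the constants. Once the identity $\Delta_\omega(e^{2\rho})=2e^\rho L_\rho(e^\rho)$ is isolated, the rest is mechanical.
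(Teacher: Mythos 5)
Your computation is correct and is essentially the direct variational argument that the cited source \cite{CHT17} carries out: the identities $\delta(\omega^n)=\Delta_\omega(\delta\phi)\,\omega^n$, $\delta\rho=-\Delta_\omega(\delta\phi)-\delta\phi+\int_X\delta\phi\,d\mu_\phi$, $e^\rho\omega^n=V\,d\mu_\phi$, the cancellation $(e^\rho-1)^2-2(e^\rho-1)e^\rho=1-e^{2\rho}$, and the pointwise identity $\Delta_\omega(e^{2\rho})=2e^\rho L_\rho(e^\rho)$ all check out, and the constant-$c$ term does exactly account for the mean subtraction in $\tilde f$. The only discrepancy with the displayed statement is an overall factor of $V$ (and the $\tfrac1V$ inside $\tilde f$), which traces to the paper's convention clash between the probability measure $\mu_\phi$ of (\ref{canonical measure}) and the $\tfrac1V$-averages written in the proposition, not to any error in your argument.
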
 

As a consequence, $\phi$ is the critical point of the Ricci-Calabi functional iff $e^\rho -1$ is a Hamilton function. 
One can check this by a simple application of the Bochner-Kodaira formula. 
Since $X$ is Fano any holomorphic vector field defines a function $h$ unique up to addition of a constant such that 
\begin{equation}\label{Hamilton function}
\sqrt{-1}\bar{\partial} h = i_v \omega. 
\end{equation}
We call $h$ a Hamilton function. 
\begin{dfn}
A K\"ahler metric $\omega \in c_1(X)$ is called a Mabuchi soliton if $e^\rho -1$ is a Hamilton function for some holomorphic vector field. 
\end{dfn} 
The vector field is zero iff $\rho=0$ and in this case Mabuchi soliton is nothing but K\"ahler-Einstein. 

Going back to the moment map picture, we also have the canonical energy functional $D\colon \cH(X, -K_X) \to \R$ with the outer derivative $(dD)_\phi=\mu_\phi- V^{-1}\omega^n$ at $\phi$. 
We call it D-energy. It is in fact separated into two terms $D=L-E$ and each term is specifically defined as  
\begin{equation}\label{definition of L and E}
L(\phi) := -\log \frac{1}{V} \int_X e^{-\phi}, ~~~
E(\phi) := \frac{1}{(n+1)V} \sum_{i=0}^n \int_X (\phi-\phi_0) \omega^i \wedge \omega_0^{n-i}. 
\end{equation}
We here take a reference $\phi_0$ and $\omega_0=dd^c\phi_0$. 
Note that the difference $\phi-\phi_0$ defines a global function while $\phi$ does not. 
One can easily compute to check the differential 
\begin{equation}\label{derivative of L and E}
(dL)_\phi = \mu_\phi, ~~~
 (dE)_\phi =V^{-1} \omega^n. 
\end{equation}
The definition of the Monge-Amp\`ere energy $E$ chose $\phi_0$ but it is characterized by (\ref{derivative of L and E}), up to addition of a constant.  

\subsection{Space of finite energy metrics}\label{space of finite energy metrics}

A fundamental property of the D-energy is that it is convex along any geodesic in the space of K\"ahler metrics. 
Since the difference $\phi-\psi$ of any two $\phi, \psi \in \cH(X, -K_X)$ defines a global function, tangent space at any point of $\cH(X, -K_X)$ is identified with $C^\infty(X; \R)$. 
Mabuchi's inner product \cite{Mab87} for any tangents $u, v \in C^\infty(X; \R)$ at $\phi$ is 
\begin{equation}\label{Mabuchi inner product}
\langle u, v\rangle =\frac{1}{V}\int_X uv \omega^n.  
\end{equation}
Any curve $\phi^t$ $(t \in [a, b])$ in $\cH(X, -K_X)$ defines a function 
$\Phi(\tau, x):=\phi^{-\log\abs{\tau}}(x)$ 
of complex variables $e^{-b} \leq \abs{\tau} \leq e^{-a}$ and $x\in X$. 
It is well-known (from \cite{Sem92}) that the geodesity for (\ref{Mabuchi inner product}) is equivalent to the degenerate Monge-Amp\`ere equation 
\begin{equation}\label{degenerate MA equation} 
(dd^c_{\tau, x}\Phi)^{n+1} =0. 
\end{equation}
The left-hand side at the same time describes the Monge-Amp\`ere energy by the fiber integration formula 
\begin{equation}\label{Hessian of E}
dd^c_\tau E(\phi) = \int_X (dd^c_{\tau, x}\Phi)^{n+1}. 
\end{equation} 
It follows that $E$ is affine along any geodesics. 
In fact for given smooth endpoints the bounded {\em weak} geodesic $\Phi$ connecting them uniquely exists, but it is not $C^2$ in general. 

Variational approach even requires the appropriate completion of the space of smooth metrics. 
These facts strongly motivate to consider a singular fiber metric $\phi$ which is only locally integrable and satisfies $dd^c \phi \geq  0$ in the sense of current. 
We denote the collection of all such singular $\phi$ by $\PSH(X, -K_X)$. 
It equivalent to say that in a coordinate patch $U_\a$, $\phi_\a$ is  pluri-subharmonic (psh for short) function. 
For the bounded psh function the wedge product of the current $\omega_\phi^n=(dd^c\phi)^n$ is safely defined thanks to the celebrated work of \cite{BT76}. 
In particular we may define the Monge-Amp\`ere energy $E$ for locally bounded $\phi$.  
For a smooth boundary data we have the bounded solution of (\ref{degenerate MA equation}). 
From the recent result \cite{CTW17}, the solution is actually of $C^{1, 1}$.   

The Monge-Amp\`ere operator $\phi \mapsto V^{-1}\omega_\phi^n$ can not be continuously extended to $\PSH(X, -K_X)$.
Following \cite{BEGZ10} and \cite{BBGZ13}, one can however take the reference $\phi_0$ smooth and bounded approximation $\phi^{(j)} := \max\{\phi, \phi_0-j\}$ of $\phi \in \PSH(X, -K_X)$, to define the {\em non-pluripolar Monge-Amp\`ere measure } 
\begin{equation}\label{non-pluripolar MA}
\MA(\phi) 
:= \lim_{ j \to \infty} 1_{\{ \phi > \phi_0-j \}}V^{-1}\omega_{\phi^{(j)}}. 
\end{equation} 
By the construction $\MA(\phi)$ drops the mass of the unbounded locus so it is no longer a probability measure.    
It can be further shown that $\MA(\phi)$ is local in pluri-fine topology and has no mass on any pluripolar set. 
In a similar idea taking bounded $\psi$ such that $\psi \geq \phi $ locally we define the Monge-Amp\`ere energy as 
\begin{equation}\label{E for arbitrary psh}
E(\phi) := \inf_\psi E(\psi) \in \R \cup \{ -\infty\}. 
\end{equation}
The extended Monge-Amp\`ere energy is upper-semicontinuous in the $L^1$-topology of $\PSH(X, -K_X)$. 
Moreover, the level set $\{ E \geq C\} $ is compact in this weak topology, as a non-linear analogue of the Banach-Alaoglu theorem. 

Let us now consider $p \geq 1$ and the $L^p$-Finsler distance $d_p$ of  $\cH(X, -K_X)$, defined by the norm of tangents 
\begin{equation}\label{norm}
\norm{u}_p :=\bigg[ \frac{1}{V}\int_X \abs{u}^p \omega_\phi^n  \bigg]^{\frac{1}{p}}. 
\end{equation} 
As we shall see, $p=1$ plays the special role in the variational approach.  

\begin{thm}[\cite{Dar15}, Theorem 2. See also \cite{BBJ15}, Theorem 1.7]\label{completeness}
Take a smooth non-increasing sequence of $\phi_j \in \PSH(X, -K_X)$ converges to $\phi$. 
Endow the space of metrics with finite Monge-Amp\`ere energy 
\begin{equation}\label{space of finite MA energy metrics}
\cE^1(X, -K_X) := \bigg\{ \phi \in \PSH(X, -K_X): E(\phi) > -\infty \bigg\} 
\end{equation} 
with the distance
\begin{equation}\label{d_1}
d_1(\phi, \psi):= 
\lim_{j \to \infty} d_1(\phi_j, \psi_j). 
\end{equation}
It then gives the coarsest refinement of the $L^1$-topology so that $E$ is continuous. 
The Monge-Amp\`ere energy is affine along every geodesic on $(\cE^1(X, -K_X), d_1)$. 
Moreover, $(\cE^1(X, -K_X), d_1)$ realizes the completion of $(\cH(X, -K_X), d_1)$. 
\end{thm}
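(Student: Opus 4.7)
The plan is to reduce the construction and all properties of $d_1$ to the rooftop-envelope formula for the smooth Finsler distance. For $\phi,\psi\in\cH(X,-K_X)$, introduce the envelope
\[
P(\phi,\psi):=\bigl(\sup\{u\in\PSH(X,-K_X):u\leq\min(\phi,\psi)\}\bigr)^*,
\]
which is a bounded psh potential, and establish
\[
d_1(\phi,\psi)=E(\phi)+E(\psi)-2E(P(\phi,\psi)).
\]
The upper bound follows from the triangle inequality together with the monotone-chain formula $d_1(\phi,\psi)=E(\phi)-E(\psi)$ valid when $\psi\leq\phi$, itself obtained by integrating the sign-fixed $L^1$-norm of the geodesic tangent against (\ref{Hessian of E}); the matching lower bound is obtained by projecting the connecting geodesic onto the envelope and using Pythagorean-style convexity of $E$. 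Crucially, after this identity $d_1$ is expressed entirely through $E$ and the envelope operator, both of which extend naturally to the finite-energy class $\cE^1(X,-K_X)$.

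For $\phi\in\cE^1$ and any smooth decreasing approximation $\phi_j\searrow\phi$ (produced by Demailly regularization), the envelopes $P(\phi_j,\psi_j)$ decrease to $P(\phi,\psi)$, and continuity of $E$ along decreasing sequences, built into (\ref{E for arbitrary psh}), forces the limit
\[
\lim_{j\to\infty}d_1(\phi_j,\psi_j)=E(\phi)+E(\psi)-2E(P(\phi,\psi))
\]
to exist, be finite, and be independent of the chosen approximations. The resulting formula is symmetric and makes $E$ automatically $d_1$-continuous, since $d_1(\phi_k,\phi)\to 0$ forces $E(P(\phi_k,\phi))\to E(\phi)$, hence $E(\phi_k)\to E(\phi)$. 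The coarsest-refinement claim is then equivalent to the characterization
\[
d_1(\phi_k,\phi)\to 0 \iff \phi_k\to\phi \text{ in } L^1 \text{ and } E(\phi_k)\to E(\phi),
\]
whose nontrivial direction $(\Leftarrow)$ is reduced, via the telescoping $\phi_k\leadsto\max(\phi_k,\phi)$ and $\phi_k\leadsto P(\phi_k,\phi)$, to monotone sequences where the envelope formula supplies the quantitative rate.

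Affinity of $E$ along a weak geodesic $\Phi$ in $\cE^1$ comes from approximating the endpoints from above by smooth potentials, solving (\ref{degenerate MA equation}) to obtain bounded geodesics $\Phi_j\searrow\Phi$, applying (\ref{Hessian of E}) on each $\Phi_j$, and passing to the decreasing limit. For completeness, a $d_1$-Cauchy sequence $\phi_k\in\cH$ has $E(\phi_k)$ bounded below through the envelope formula, so the compactness of $\{E\geq -C\}$ in $L^1$ extracts a subsequential limit $\phi\in\cE^1$; Cauchyness then promotes this to $E(\phi_k)\to E(\phi)$, and the characterization above yields $d_1(\phi_k,\phi)\to 0$. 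Density of $\cH$ in $\cE^1$ is automatic from the definition via smooth decreasing approximation.

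The hardest step is the $(\Leftarrow)$ implication: since one has no a priori comparison of $P(\phi_k,\phi)$ with either potential, one must route through the double reduction to $\max(\phi_k,\phi)$ and $P(\phi_k,\phi)$, and the argument rests decisively on $E$ being genuinely \emph{continuous} along monotone pluripotential limits while only upper semicontinuous on $L^1$. This monotone-versus-weak mismatch is the analytic heart of Darvas's construction.
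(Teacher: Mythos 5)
First, note that the paper itself offers no proof of this theorem: it is imported verbatim from \cite{Dar15} and \cite{BBJ15}, so the only meaningful comparison is with the proofs in those references. Your route through the rooftop envelope $P(\phi,\psi)$ and the Pythagorean identity $d_1(\phi,\psi)=E(\phi)+E(\psi)-2E(P(\phi,\psi))$, together with the monotone-chain formula $d_1(\phi,\psi)=E(\phi)-E(\psi)$ for $\psi\leq\phi$, is exactly Darvas's strategy, and reducing the coarsest-refinement claim to the equivalence ``$d_1$-convergence $\iff$ $L^1$-convergence plus convergence of $E$'' is the correct formulation of what must be shown. The extension of $d_1$ to $\cE^1(X,-K_X)$ by decreasing regularization and the affineness of $E$ along weak geodesics by decreasing approximation of the endpoints are likewise in line with the cited arguments.

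The completeness step, however, has a genuine gap. From $d_1$-Cauchyness and the Lipschitz bound $\abs{E(\phi_k)-E(\phi_j)}\leq d_1(\phi_k,\phi_j)$ you get $E(\phi_k)\to\ell$ for some $\ell$, and weak compactness of $\{E\geq -C\}$ (after normalizing $\sup_X(\phi_k-\phi_0)=0$) gives a weak subsequential limit $\phi$; but upper semicontinuity of $E$ in the $L^1$-topology only yields $E(\phi)\geq\ell$, and nothing in your argument excludes $E(\phi)>\ell$, in which case your own characterization of strong convergence would fail to apply and $\phi$ need not be the $d_1$-limit. The phrase ``Cauchyness then promotes this to $E(\phi_k)\to E(\phi)$'' is precisely the assertion that requires proof. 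The standard repair --- and what Darvas actually does --- is to pass to a subsequence with $d_1(\phi_k,\phi_{k+1})\leq 2^{-k}$ and form the envelopes $P(\phi_k,\dots,\phi_l)$, which decrease in $l$ to potentials $\psi_k\in\cE^1(X,-K_X)$ satisfying $d_1(\psi_k,\phi_k)\leq 2^{-k+2}$ by the Pythagorean formula; since the $\psi_k$ increase in $k$, the limit is produced by \emph{monotone} approximation from below, where $E$ is genuinely continuous. Ironically, this is exactly the ``monotone-versus-weak mismatch'' you single out at the end as the analytic heart of the construction, yet your completeness argument routes through weak compactness, which is the one place where that mismatch bites.
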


We usually refer to the $L^1$-topology as the ``weak" topology and the $d_1$-topology as the ``strong" topology. 
The space $\cE^1(X, -K_X)$ is contained in the {\em finite energy class} 
\begin{equation}\label{space of finite energy metrics}
 \cE(X, -K_X) := \bigg\{ \phi \in \PSH(X, -K_X): \int_X \MA(\phi)=1 \bigg\}. 
\end{equation}
Restricted to $\cE(X, -K_X)$ the non-pluripolar Monge-Amp\`ere operator is continuous along any monotone sequence. 
The determination of the domain of Monge-Amp\`ere operator owes to the pioneering work \cite{Ceg98}. The compact setting is treated in \cite{GZ07},  \cite{BEGZ10}. See also the comprehensive textbook \cite{GZ17}. 

As it was shown in \cite{Dar17a}, 
geodesics connecting two points are not unique in $\cE^1$, however, for any $\phi^0, \phi^1 \in \cE^1(X, -K_X)$ there exists the canonical one if we impose the plurisubharmonicity to the corresponding function $\Phi$ on the product space. 
\begin{thm}[\cite{BBJ15}, Theorem 1.7] 
For any $\phi^0, \phi^1 \in \cE^1(X, -K_X)$ the solution of (\ref{degenerate MA equation}) has the unique solution $\phi^t \in \cE^1(X, -K_X)$ and provides a geodesic for $d_1$. 
This special type of geodesic is called {\em psh geodesic}. 
\end{thm}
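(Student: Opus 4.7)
The plan is to obtain $\phi^t$ as a decreasing limit of smooth $C^{1,1}$ weak geodesics whose endpoints approximate $\phi^0,\phi^1$, then to verify that the limit lies in $\cE^1(X,-K_X)$ and realizes the $d_1$-geodesic joining the two endpoints, and finally to deduce uniqueness from a Perron-type envelope characterization.

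First I would fix smooth decreasing sequences $\phi^0_j,\phi^1_j\in\cH(X,-K_X)$ with $\phi^i_j\searrow\phi^i$; such approximations are available via Demailly regularization combined with the $\max(\phi,\phi_0-j)$ truncation already used in (\ref{non-pluripolar MA}). For each $j$, Chen's theorem together with the $C^{1,1}$ regularity result of \cite{CTW17} mentioned in the text produces a bounded psh solution $\Phi_j$ of (\ref{degenerate MA equation}) on the annular product, which in the $(t,x)$ variables is the weak geodesic $\phi^t_j$ between $\phi^0_j$ and $\phi^1_j$. The comparison principle for the degenerate complex Monge-Amp\`ere operator forces monotonicity: because $\phi^i_{j+1}\leq\phi^i_j$ on the two boundary components, one has $\phi^t_{j+1}\leq\phi^t_j$ throughout, while the candidate subsolution $(1-t)\phi^0_j+t\phi^1_j$ yields a pointwise lower bound. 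The upper semicontinuous regularization $\phi^t$ of the pointwise limit is therefore a well-defined family of psh potentials.

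To see that $\phi^t$ actually lies in $\cE^1(X,-K_X)$ I would exploit the affinity of the Monge-Amp\`ere energy along each smooth weak geodesic, which is immediate from the fiber-integration identity (\ref{Hessian of E}) combined with (\ref{degenerate MA equation}): this gives $E(\phi^t_j)=(1-t)E(\phi^0_j)+tE(\phi^1_j)$. Since $E$ is continuous along decreasing sequences in $\cE(X,-K_X)$ and $E(\phi^i_j)\to E(\phi^i)>-\infty$, we obtain both $\phi^t\in\cE^1(X,-K_X)$ and the affine formula $E(\phi^t)=(1-t)E(\phi^0)+tE(\phi^1)$. The limit $\Phi$ still solves (\ref{degenerate MA equation}) in the non-pluripolar sense thanks to continuity of the non-pluripolar Monge-Amp\`ere measure along monotone decreasing sequences on $\cE$. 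For the $d_1$-geodesic property I would invoke Theorem \ref{completeness}: by its definition of $d_1$ via decreasing smooth approximations, $d_1(\phi^t_j,\phi^t)\to 0$, while for smooth endpoints the curve $t\mapsto\phi^t_j$ realizes $d_1(\phi^0_j,\phi^1_j)$ as a Finsler length, and passing to the limit gives $d_1(\phi^s,\phi^t)=|s-t|\,d_1(\phi^0,\phi^1)$. Uniqueness follows from a Perron-type characterization: any psh solution of (\ref{degenerate MA equation}) with the same boundary data coincides with the upper envelope of all psh competitors lying below $\phi^0,\phi^1$ at the boundary, one inequality being tautological and the other provided by the comparison principle applied to bounded truncations.

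The main obstacle I expect is precisely this last continuity step for the non-pluripolar Monge-Amp\`ere mass along the decreasing limit: in contrast to the bounded setting, mass can a priori leak onto the pluripolar locus where $\Phi=-\infty$, so one must appeal to the $\cE^1$ hypothesis and the affinity of $E$ to force a global mass balance and conclude that $\Phi$ genuinely solves (\ref{degenerate MA equation}), rather than satisfying it only off a pluripolar set. The same delicate issue resurfaces when verifying that the limiting length equals the limiting distance, which is why the $d_1$-completeness theorem rather than mere $L^1$-compactness is essential at this point.
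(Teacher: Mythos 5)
The paper itself contains no proof of this statement: it is imported verbatim from \cite{BBJ15} (building on \cite{Dar15}, \cite{Dar17a}), so there is nothing internal to compare against. Your reconstruction follows the standard route of those references — solve the Dirichlet problem (\ref{degenerate MA equation}) for smooth decreasing approximations of the endpoints, pass to the decreasing limit, and control the limit via the affinity and monotone continuity of $E$ — and in outline this is the right architecture.

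Two genuine problems remain. First, a concrete error: $(1-t)\phi^0_j+t\phi^1_j$ is \emph{not} a subsolution of (\ref{degenerate MA equation}). Writing $t=-\log\abs{\tau}$, the mixed terms $d\log\abs{\tau}\wedge d^c(\phi^1_j-\phi^0_j)$ prevent the affine combination from being psh on the product; worse, since $\tau\mapsto\Phi_j(\tau,x)$ is $\S^1$-invariant and subharmonic, $t\mapsto\phi^t_j(x)$ is convex, so the affine combination is an \emph{upper} bound for the geodesic, not a lower one. The correct subgeodesic barrier for bounded endpoints is $\max\{\phi^0_j-Ct,\ \phi^1_j-C(1-t)\}$, and for general $\cE^1$ endpoints no uniform pointwise barrier exists: non-degeneracy of the decreasing limit must be extracted from $E(\phi^t_j)=(1-t)E(\phi^0_j)+tE(\phi^1_j)\geq(1-t)E(\phi^0)+tE(\phi^1)>-\infty$ together with monotone continuity of $E$, which you do also invoke, so the construction survives but not for the reason you give. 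Second, and more seriously, the uniqueness paragraph is a placeholder rather than a proof. One must first specify in what sense the boundary values are attained (decreasingly, in capacity, or in $d_1$) for the statement to be well-posed, and the inequality asserting that any solution is dominated by the Perron envelope requires a domination principle for unbounded potentials of finite energy; this does not follow from the Bedford--Taylor comparison principle by ``bounded truncations,'' since truncation destroys the property of solving the homogeneous equation. That domination principle in $\cE^1$ is precisely the technical heart of \cite{Dar17a} and \cite{BBJ15}, and it is the one ingredient your sketch cannot simply wave at.
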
 
The convexity of the D-energy functional along such a weak geodesic is established  by the fundamental work \cite{Bern09}, \cite{BP08}, and \cite{Bern11}. 
Since it is not scale free, \ie $E(\phi+c)=E(\phi)+c$ for constants $c$, it is convenient to introduce the Aubin's J-functional: 
\begin{equation}\label{definition of J}
J(\phi) =L_0(\phi)-E(\phi):=  \sup_X(\phi-\phi_0)-E(\phi). 
\end{equation} 
It follows that $J(\phi)- d_1(\phi, \phi_0)$ is uniformly bounded. 
As we have the uniform estimate 
\begin{equation}
\sup_X(\phi-\phi_0) \leq \frac{1}{V} \int_X (\phi-\phi_0) \omega_0^n  +C 
\end{equation} 
for $\phi \in \cH(X, -K_X)$, sometimes $ V^{ -1} \int_X (\phi-\phi_0) \omega_0^n$ is adopted for the definition of $J$. 
Moreover one may introduce the symmetric I-functional 
\begin{equation}\label{I-functional}
I(\phi, \psi)
:= \int_X (\phi-\psi)(\MA(\psi)-\MA(\phi))
\end{equation}
which satisfies the pseudo-triangle inequality: 
\begin{equation}\label{pseudo-triangle inequality}
c_n I(\phi_1, \phi_2) \leq I(\phi_1, \phi_2)+I(\phi_2, \phi_3) 
\end{equation}
for any $\phi_1, \phi_2, \phi_3 \in\cE^1(X, \omega_0)$. 
The induced topology of $\cE^1$ is equivalent to $d_1$-topology. 
See \cite{BBEGZ16}, section 1.  

We say that the D-energy is coercive if 
\begin{equation*}
D(\phi) \geq \e J(\phi) -C 
\end{equation*}
for any smooth $\phi$. 
From the weak compactness of the level set $\{ E \geq -C\}$ the coercivity guarantees the existence of a minimizer. 

\begin{rem} 
If one considers coercivity for $d_2$ there is no example of Fano manifolds satisfy the condition. 
This is confirmed by \cite{BHJ15}, Proposition 8.5 for the K-energy. 
We may check the same for the D-energy, using Definition \ref{definition of D^NA}. 
\end{rem}

\subsection{Modified D-energy}\label{modified D-energy}

Exploiting the inner product (\ref{Mabuchi inner product}) we may also modify the D-energy such that the critical point gives the Mabuchi soliton. 

It is consistent to consider the group of bundle automorphism $\Aut(X, -K_X)$, indeed any $g \in \Aut(X, -K_X)$ pulls-back $\phi \in \PSH(X, -K_X)$ to $g^*\phi$. 
More precisely, for any $x \in X$ a vector $v \in (-K_X)_x $ is evaluated as 
\begin{equation}\label{pull-back}
\abs{v}^2 e^{-(g^*\phi)(x)} = \abs{g\cdot v}^2 e^{-\phi(gx)}. 
\end{equation}
Note that the local frame identifying the function $\phi(gx)$ with the pull-backed fiber metric depends on $g$. 
Indeed we see from (\ref{pull-back}) that the function $\phi(gx)$ is unbounded in $g$. 
Since the line bundle is anti-canonical, any automorphism of X can be lifted to $-K_X$, hence the group splits into 
$\Aut(X, -K_X)= \Aut(X) \times \G_m$. 
We denote the identity component by $\Aut^0(X, -K_X)$.  
In particular, constant multiplication on each fiber defines the identical one-parameter subgroup which we denote by $1\colon \G_m\to \Aut^0(X, -K_X)$. 

Our first step is to specify the soliton vector field of the Mabuchi soliton. 
For the purpose we fix an algebraic subtorus $T \subset \Aut(X, -K_X)$. 
The compact part $S=\Hom(\S^1, T)$ is canonically defined. 
Henceforth we take a compact subgroup $K \subset \Aut^0(X, -K_X)$ which contains $S$ and commutes with $T$. It is possible that $K=S$. 
We define the space of $K$-invariant metrics 
\begin{equation}
\cH(X, -K_X)^K:= 
\bigg\{ \phi \in \cH(X, -K_X): g^*\phi= \phi ~~\text{for any $g \in K$ }\bigg\}. 
\end{equation} 

From the assumption that $K$ commutes with the torus, $T$ acts on $\cH(X, -K_X)^K$. 
Tangents are identified with smooth $K$-invariant functions. 
We take $K$-invariant functions in \ref{norm} to define the distance $d_1$. 
Similarly the space of finite energy $K$-invariant metrics $\cE^1(X, -K_X)^K$ can be defined  and has the same property as Theorem \ref{completeness}. 
Note that in the definition 
\begin{equation*}
d_1(\phi_0, \phi_1)
= \inf_{\phi_t} \int_0^1 \frac{1}{V}\int_X \abs{\dot{\phi}_s} \omega_{\phi_t}^n  ds
\end{equation*}
a path $\phi_t$, connecting $\phi_0$ to $\phi_1$, is taken as $K$-invariant. 
The unique psh geodesic is however $K$-invariant if $\phi_0, \phi_1 \in \cH(X, -K_X)^K$. 
It follows that $d_1$ for $\cH(X, -K_X)^K$ equals to the previous one for $\cH(X, -K_X)$. 

Let us denote by $N := \Hom(\G_m, T)$ the lattice of all one-parameter subgroups $\mu \colon \G_m \to T$. 
The dual lattice $M := \Hom(T, \G_m)$ is identified with the set of characters. 
Observe that the vector space $N_\R := N\otimes \R$ is identified with the Lie algebera $\fs$ of $S$. 
From the basic symplectic geometry $S$ defines the moment polytope $P \subset M_\R$ as the image of the moment map 
\begin{equation}
m_\phi\colon X \to M_\R. 
\end{equation}
Actually for any smooth $K$-invariant $\omega_\phi$ and $\mu \in N_\R$ we have the unique map satisfying 
\begin{equation}
\langle \mu, m_\phi(x) \rangle =\frac{d}{dt} \bigg\vert_{t=0} \phi(\mu(e^{t}) x). 
\end{equation}
It is easy to show that $m_\phi$ is independent of the metric. 
Once $\mu \in N_\R$ is fixed $h_\mu:= \langle \mu, m_\phi(x) \rangle$ gives the (unnormalized) Hamilton function. 
Notice that when $\mu \in N$ is generated by a vector field $v \in \ft$ we have the relation (\ref{Hamilton function}). 
The $S$-invariance of $\omega$ guarantees that $h_\mu$ is real. 
For the identical one-parameter subgroup we observe $h_1=1$. 

In this convention following \cite{FM95} we introduce the inner product 
\begin{equation}\label{inner product of Hamilton functions}
\langle \mu, \nu \rangle:= 
\int_X h_\mu h_\nu \omega^n
\end{equation}
for $\mu, \nu \in N_\R$. 
Of course $h_\mu$ depends on the choice of metric $\omega$ but as we will see in the next section the above inner product is determined only by $\mu, \nu$. 
The Hamilton function can be regarded as the tangent vector of the associated (smooth) geodesic ray 
\begin{equation}
\phi^t = \mu(e^{-t})^*\phi^0 
\end{equation} 
for a given initial $\phi^0 \in \cH(X, -K_X)^K$. 
Note again that $\phi^t$ is $K$-invariant for each $t$ since we assumed $K$ commutes with $T$. 

The slope of D-energy along this ray is independent of $t \in [0, \infty)$ and explicitly computed as 
\begin{equation}\label{Futaki invariant}
F(\mu):= \frac{1}{V}\int_X h_\mu (e^\rho -1) \omega^n. 
\end{equation}
It is precisely the classical Futaki invariant \cite{Fut83} for the vector field generating $\mu$. 
Notice that using the scalar curvature $S_{\omega_\phi}$ and its average $\hat{S}$ the Futaki invariant can be also written as 
\begin{equation}\label{L-Futaki invariant}
F(\mu)= \frac{1}{V}\int_X h_\mu(S_{\omega_\phi}-\hat{S}) \omega^n 
\end{equation}
and the description leads us to the definition of the K-energy. 
Therefore, D-energy is rather natural in view of the original description in \cite{Fut83}. 
The extremal vector field naturally arises from the optimization of $F(\mu)$ normalized by $\norm{\mu}= \langle \mu, \mu \rangle^{\frac{1}{2}}$. 
Actually a simple variational computation 
\begin{equation*}
\d \bigg( \frac{F(\mu)}{\norm{\mu}} \bigg)
= \frac{1}{\norm{\mu}} \bigg( F (\d \mu) -\frac{\langle \d\mu, \mu \rangle}{\langle \mu, \mu \rangle}F(\mu)\bigg) 
\end{equation*}
suggest us to introduce the extremal one-parameter subgroup $\eta \in N_\R$, which satisfies 
\begin{equation}\label{extremal vector} 
F(\mu)- \langle \mu, \eta \rangle =0 
\end{equation}
for any $\mu \in N_\R$. 
Since (\ref{extremal vector}) is a system of linear equations, one easily see that $\eta$ is uniquely characterized by the above relation. 
It is also easy to check $\eta \in N_\Q$ and automatically 
\begin{equation}\label{extremal is orthogonal to constant}
\langle 1, \eta \rangle= \int_X h_\eta \omega^n =0. 
\end{equation} 
On the other hand the Mabuchi soliton should minimize $R(\phi)$.  
In fact if there exists a Mabuchi soliton $\omega_\phi$ with $e^\rho -1 =h_\mu $ for some $\mu \in N_\R$ we have $\mu =\eta$ and 
\begin{equation*}
R(\phi) = \frac{F(\eta)}{\norm{\eta}}. 
\end{equation*}
That is, the both optimizer $\phi$ and $\eta$ attain the same value. 
In general the lower bound of the Ricci-Calabi functional is attained by the normalized non-Archimedean D-energies which we introduce in the next section.  
See the recent work \cite{Xia19} and \cite{His19} for this topic. 
The following is a consequence of Theorem \ref{property of E_g} in the next subsection. 

\begin{prop}
There exists the modified Monge-Amp\`ere energy $E_\eta \colon \cH(X, -K_X)^K \to \R $ satisfying 
\begin{equation*}
(d E_\eta )_\phi = 
 (1+h_\eta) V^{-1}\omega_\phi^n 
\end{equation*}
at each point $\phi \in \cH(X, -K_X)^K$. Moreover, $E_\eta$ is geodesically affine.  
\end{prop}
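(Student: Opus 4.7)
The plan is to construct $E_\eta$ as a primitive of the $1$-form
$\alpha_\phi(u):=V^{-1}\int_X u(1+h_\eta)\omega_\phi^n$
on the convex space $\cH(X,-K_X)^K$, and then to establish geodesic affineness by a direct second-derivative computation.

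For the closedness of $\alpha$, compute $d\alpha(u,v)$ for constant extensions of $K$-invariant tangents $u,v$; it is the sum of two asymmetric contributions: one from the $\phi$-dependence of $h_\eta$ (encoded by the Lie derivative $\eta(f)(x):=X_\eta f(x)$) and one from the $\phi$-dependence of $\omega_\phi^n$. For $K$-invariant metrics the Hamilton relation (\ref{Hamilton function}) specialises to the real identity $i_{X_\eta}\omega = d^c h_\eta$, since $S\subset K$-invariance eliminates the $J$-imaginary part. Cartan's magic formula then converts the first contribution into the Dirichlet-type expression $\tfrac{n}{V}\int_X[u\,dh_\eta\wedge d^cv - v\,dh_\eta\wedge d^cu]\wedge\omega^{n-1}$ (via the pointwise symmetry $du\wedge d^cv = dv\wedge d^cu$ on top forms), and integration by parts against the weight $(1+h_\eta)$ converts the second into the negative of the same quantity. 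Cancellation yields $d\alpha\equiv 0$.

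Closedness on the convex $\cH^K$ gives, for a reference $\phi_0\in\cH^K$, the $K$-invariant primitive
\begin{equation*}
E_\eta(\phi):=\int_0^1\alpha_{\phi^s}(\phi-\phi_0)\,ds, \qquad \phi^s:=(1-s)\phi_0+s\phi,
\end{equation*}
satisfying $(dE_\eta)_\phi=\alpha_\phi$. For affineness along a smooth $K$-invariant geodesic $\phi^t$, differentiate $\tfrac{d}{dt}E_\eta(\phi^t)=V^{-1}\int_X\dot\phi(1+h_\eta)\omega^n$ once more; substituting the smooth geodesic equation $\ddot\phi\,\omega^n=n\,d\dot\phi\wedge d^c\dot\phi\wedge\omega^{n-1}$, integrating by parts in the $dd^c\dot\phi$-term, and again applying the Cartan identity to $\dot\phi\,\eta(\dot\phi)\omega^n$ makes the four resulting contributions cancel pairwise, so $\tfrac{d^2}{dt^2}E_\eta\equiv 0$. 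The $C^{1,1}$-regularity of psh geodesics from \cite{CTW17} extends this to weak geodesics with smooth endpoints. The principal obstacle is the closedness verification, where the cancellation depends crucially on the identity $i_{X_\eta}\omega=d^ch_\eta$ and on careful sign bookkeeping in the Cartan and integration-by-parts steps; once this is in place, the remaining steps are routine adaptations of the arguments for $E$ in subsection \ref{Ricci curvature formulation}.
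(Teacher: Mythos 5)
Your proposal is correct in substance but takes a genuinely different route from the paper. The paper obtains this Proposition as a special case of Theorem \ref{property of E_g}, quoted from \cite{BWN14}: one takes the general modified energy $E_g$ with $(dE_g)_\phi=\MA_g(\phi)$ for the linear weight $g(x)=1+\langle\eta,x\rangle-\int_P\langle\eta,x\rangle\DH_T$ on the moment polytope, and the geodesic affineness falls out of the fiber-integration formula (\ref{fiber integration formula for E}), since $g(m_\Phi)(dd^c_{\tau,x}\Phi)^{n+1}=0$ along any bounded psh geodesic. You instead verify directly that the $1$-form $u\mapsto V^{-1}\int_X u(1+h_\eta)\omega_\phi^n$ is closed, using $\delta h_\eta=X_\eta(\delta\phi)$, the relation $i_{X_\eta}\omega=d^ch_\eta$, and the symmetry $du\wedge d^cv\wedge\omega^{n-1}=dv\wedge d^cu\wedge\omega^{n-1}$; I checked the cancellation and the sign bookkeeping does close up as you claim, and the analogous second-derivative computation for affineness also works. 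Your argument is more elementary and self-contained, but it exploits the specific linear structure of the weight (through $\delta h_\eta=X_\eta(\delta\phi)$), whereas the paper's route via $\MA_g$ covers arbitrary continuous $g$ (e.g.\ the exponential weight (\ref{choice of g for KRS}) for K\"ahler--Ricci solitons) and extends at once to $\cE^1$. One caveat: your final appeal to $C^{1,1}$ regularity to pass from smooth to weak geodesics is too quick --- the pointwise geodesic equation and the integrations by parts in your second-derivative computation are not directly licensed at $C^{1,1}$ regularity. The clean fix is exactly the paper's fiber-integration formula interpreted via Bedford--Taylor theory (or an approximation by $\varepsilon$-geodesics); since geodesics with smooth endpoints are generically not $C^2$, this step should not be waved through.
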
 

We define modified D-energy as $D_\eta:= L-E_\eta$. 
It follows from the proposition that a smooth metric $\omega$ is Mabuchi soliton iff it is a critical point of the modified D-energy. 

\subsection{Modified Monge-Amp\`ere measure}\label{modified Monge-Ampere measure}

For the variational approach it is necessary to handle with $E_\eta(\phi)$ for singular $\phi$. 
In this part following \cite{BWN14} we discuss basic properties of the modified Monge-Amp\`ere measure. 
In \cite{BWN14} the case $K=S$ is considered but the same argument works for general $K$ which contains $S$ and commuts with $T$. 
Let continuously $m_\phi \colon X \to P$ be the moment map. 

\begin{dfn}[\cite{BWN14}]\label{modified MA measure}
Let $\phi \in \cH(X, -K_X)^K$.  
For a non-negative continuous function $g\colon P \to \R$ define the modified Monge-Amp\`ere measure 
\begin{equation*}
\MA_g(\phi):= g(m_\phi(x))\MA(\phi). 
\end{equation*}
The definition further extends to general $\phi \in \PSH(X, -K_X)^K$ so that 
the measure $\MA_g(\phi)$ is 
local in plurifine topology and non-pluripolar. 
\end{dfn} 

\begin{thm}[\cite{BWN14}, Theorem 2.7]\label{DH measure}
The Duistermaat-Heckman measure 
\begin{equation*}
\DH_T:= (m_\phi)_* \MA(\phi)
\end{equation*} 
is independent of smooth $\phi$ and defines a positive measure on $M_\R$. 
For any $\phi \in \PSH(X, -K_X)^K$ we have 
\begin{equation*}
\int_X \MA_g(\phi) \leq \int_P g \DH_T. 
\end{equation*} 
The equality holds if $\phi \in \cE(X, -K_X)^K$, namely when $\MA(\phi)$ is a probability measure. 
\end{thm}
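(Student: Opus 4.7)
The plan is to establish the three assertions in order: (i) the independence of $\DH_T$ for smooth $\phi$, (ii) the inequality for general $\phi\in\PSH(X,-K_X)^K$, and (iii) equality in $\cE(X,-K_X)^K$.

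First, for smooth $\phi\in\cH(X,-K_X)^K$, I would show that $(m_\phi)_*(V^{-1}\omega_\phi^n)$ is independent of $\phi$ by differentiating along a smooth path $\phi_t$ in $\cH(X,-K_X)^K$. For any $f\in C_c^\infty(M_\R)$ and test character direction $\mu\in N_\R$, the defining identity $d\langle\mu,m_\phi\rangle=-i_{v_\mu}\omega_\phi$ together with Cartan's formula yields that the variation of $\int_X f(m_{\phi_t})\,\omega_{\phi_t}^n$ splits into two terms, one coming from the variation of the moment map (involving $\dot{m}_{\phi_t}=d^c\dot\phi_t$ restricted along the torus action) and one from the variation of the volume form $\omega_{\phi_t}^n$. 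These cancel after integration by parts on the closed manifold $X$, which is the standard equivariant Duistermaat--Heckman computation. This identifies $\DH_T$ with a positive measure on $P\subset M_\R$ that is independent of smooth $\phi$.

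Next, for bounded $K$-invariant $\phi$, pick a decreasing sequence of smooth $K$-invariant $\phi_j\searrow\phi$. The moment maps $m_{\phi_j}\colon X\to P$ are uniformly bounded and by the Bedford--Taylor theory $\omega_{\phi_j}^n\to \omega_\phi^n$ weakly. Using the plurifine locality of the Monge--Amp\`ere operator on bounded psh functions, one identifies $m_\phi$ with $m_{\phi_j}$ on each set $\{\phi>\phi_0-c\}$ where both are well-defined, and so $g(m_{\phi_j})\omega_{\phi_j}^n$ converges weakly to $g(m_\phi)\omega_\phi^n$. Passing to the limit in the smooth case gives $\int_X \MA_g(\phi)=\int_P g\,\DH_T$ for every bounded $\phi\in\PSH(X,-K_X)^K$.

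Finally, for arbitrary $\phi\in\PSH(X,-K_X)^K$, apply the canonical truncation $\phi^{(j)}:=\max(\phi,\phi_0-j)$, which is bounded and $K$-invariant. The definition \eqref{non-pluripolar MA} and plurifine locality give
\[
\int_X \MA_g(\phi)=\lim_{j\to\infty}\int_{\{\phi>\phi_0-j\}} g(m_{\phi^{(j)}})\,\MA(\phi^{(j)})\leq \lim_{j\to\infty}\int_X g(m_{\phi^{(j)}})\,\MA(\phi^{(j)})=\int_P g\,\DH_T,
\]
where the last equality uses the bounded case applied to each $\phi^{(j)}$. The inequality is strict in general because $\MA(\phi^{(j)})$ may carry positive mass on $\{\phi=\phi_0-j\}$ which is dropped in the non-pluripolar limit. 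If $\phi\in\cE(X,-K_X)^K$, then $\int_X\MA(\phi)=1$ so no mass is lost; taking $g\equiv 1$ shows the deficit measure vanishes, and then since $g$ is bounded and non-negative, pluripolar absence of mass forces equality for every continuous $g$.

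The principal obstacle is the third step: interpreting $m_\phi$ on the unbounded locus of $\phi$ and identifying the weak limit of $g(m_{\phi^{(j)}})\MA(\phi^{(j)})$. The resolution uses plurifine locality, which lets one replace $m_\phi$ by $m_{\phi^{(j)}}$ on the bounded locus $\{\phi>\phi_0-j\}$ and exploits that all mass relevant to $\MA(\phi)$ concentrates on such loci. Once this is in place, the Duistermaat--Heckman invariance from step one propagates cleanly to the whole class $\PSH(X,-K_X)^K$.
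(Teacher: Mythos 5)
Your overall architecture is sound and is in fact the same route taken by the cited source: the paper itself offers no proof of this statement, simply referring to \cite{BWN14}, Theorem 2.7, and your three-step scheme (Duistermaat--Heckman invariance for smooth invariant metrics, passage to bounded metrics by decreasing smooth approximation, then the general case via the non-pluripolar truncation $\phi^{(j)}=\max(\phi,\phi_0-j)$ with the mass-conservation argument giving equality on $\cE(X,-K_X)^K$) is exactly how that reference proceeds. Your step (iii) is correctly reasoned: $\phi=\phi^{(j)}$ on the plurifine open set $\{\phi>\phi_0-j\}$, so plurifine locality legitimately identifies $\MA_g(\phi)$ with $\MA_g(\phi^{(j)})$ there, the truncated masses increase in $j$ because $g\geq 0$, and the deficit vanishes when $\int_X\MA(\phi)=1$ by testing against $g\equiv 1$ and using boundedness of $g$.

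The one place your justification would not survive scrutiny is the bounded case, step (ii). You write that plurifine locality ``identifies $m_\phi$ with $m_{\phi_j}$ on each set $\{\phi>\phi_0-c\}$.'' This is not available: the approximants $\phi_j$ of a decreasing smooth approximation satisfy $\phi_j>\phi$ essentially everywhere, so the two metrics agree on no plurifine open set, and their moment maps (being derivatives of $\phi_j$, resp.\ $\phi$, along torus orbits) are genuinely different functions on $\{\phi>\phi_0-c\}$. Locality of the Monge--Amp\`ere operator compares measures of functions that \emph{coincide} on a set; it says nothing here. What you actually need is the continuity statement that $g(m_{\phi_j})\MA(\phi_j)\to g(m_\phi)\MA(\phi)$ weakly along decreasing bounded sequences, which follows from the pointwise convergence $m_{\phi_j}\to m_\phi$ (the orbit restrictions $t\mapsto\phi_j(\mu(e^t)x)$ are convex and decrease to that of $\phi$, so their derivatives converge where defined) combined with Bedford--Taylor convergence of $\MA(\phi_j)$; making the product converge is the technical content of \cite{BWN14}, Propositions 2.6--2.7, and is not a formal consequence of either ingredient alone. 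Since total mass $1$ is preserved for bounded metrics, this repaired step does yield $\int_X\MA_g(\phi)=\int_P g\,\DH_T$ in the bounded case, and the rest of your argument then goes through.
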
 

Let $\langle, \rangle$ be the canonical paring of the lattices $N$ and $M$. We are interested in the case 
\begin{equation}\label{choice of g}
g(x):= 
1+\langle \eta, x \rangle -\int_P \langle \eta, x \rangle \DH_T. 
\end{equation} 
Note that $g$ of this form is not necessarily non-negative. 
At least when $g \geq 0$ and $\phi$ smooth we observe 
$\MA_g(\phi)= (1+h_\eta)\MA(\phi)$ and 
\begin{equation}\label{bound of g-MA} 
(\inf_P g) \MA(\phi) \leq \MA_g(\phi) \leq (\sup_P g) \MA(\phi). 
\end{equation} 
Notice that $1+h_\eta >0$ holds if $X$ admits a Mabuchi soliton. 
As in the next section we shall see that the condition $1+h_\eta >0$ is numerical, from now on we assume that the above $g$ is positive. 
Then the equation of Mabuchi soliton may be interpreted into the Monge-Amp\`ere type equation 
\begin{equation}\label{weak Mabuchi soliton}
\MA_g(\phi) =\mu_\phi. 
\end{equation}
We call $\phi \in \PSH(X, -K_X)^K$ satisfying this condition a {\em weak Mabuchi soliton}.  

On the other hand, if we choose  
\begin{equation}\label{choice of g for KRS}
g(x)= 
\frac{e^{\langle \mu, x \rangle}}{\int_P e^{\langle \mu, x \rangle}  \DH_T}  
\end{equation} 
with certain $\mu$, equation (\ref{weak Mabuchi soliton}) gives the weak K\"ahler-Ricci soliton. In this case $g$ is always positive but $\mu \notin N_\Q$. 

\begin{thm}[\cite{BWN14}, Lemma 2.14, Proposition 2.15]\label{property of E_g}
We have the canonical energy $E_g\colon \cH(X, -K_X)^K \to \R$ such that 
$(dE_g)_\phi =\MA_g(\phi)$. 
For general $\phi \in \PSH(X, -K_X)^K$ we have 
\begin{equation*}
E_g(\phi) := \inf_{\psi \geq \phi} E_g(\psi ), 
\end{equation*}
where $\psi$ runs through bounded ones, or $\cH(X, -K_X)^K$. 
The functional $E_g$ is monotone, upper-semicontinuous in $L^1$-topology,  and continuous for any non-increasing sequence in $\PSH(X, -K_X)^K$.  
\end{thm}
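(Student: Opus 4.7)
My plan is to follow \cite{BWN14}, constructing $E_g$ in stages: first on $\cH(X,-K_X)^K$ by a line integral, then extending to bounded psh $K$-invariant metrics via a cocycle formula, and finally to all of $\PSH(X,-K_X)^K$ by infimum over bounded majorants.

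On $\cH(X,-K_X)^K$ define
\begin{equation*}
E_g(\phi) - E_g(\phi_0) := \int_0^1 \frac{1}{V}\int_X \dot\phi_t \, \MA_g(\phi_t) \, dt
\end{equation*}
along a smooth path, and verify path-independence, i.e., closedness of the 1-form $\phi \mapsto \MA_g(\phi)$ on $\cH(X,-K_X)^K$. Since the moment map $m$ is $\phi$-independent, this amounts to the identity
\begin{equation*}
\int_X u \, g(m) \, dd^c v \wedge \omega^{n-1} = \int_X v \, g(m) \, dd^c u \wedge \omega^{n-1}
\end{equation*}
for $K$-invariant tangents $u, v$. Following \cite{BWN14}, Lemma 2.14, integration by parts produces asymmetric cross terms involving $d(g \circ m) = \sum_i \partial_i g(m) \, dm_i$; the moment-map identity $dm_i = -i_{v_{e_i}}\omega_\phi$ combined with $K$-invariance of $u, v$ and Cartan's formula is precisely what cancels these cross terms. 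The resulting cocycle formula makes sense for bounded $\phi \in \PSH(X,-K_X)^K$ thanks to Bedford--Taylor theory, so $E_g$ extends to the bounded psh class. For arbitrary $\phi \in \PSH(X,-K_X)^K$, set $E_g(\phi) := \inf E_g(\psi)$ over bounded $\psi \in \PSH(X,-K_X)^K$ with $\psi \geq \phi$, valued in $\R \cup \{-\infty\}$; Demailly-type regularization composed with averaging over $K$ shows the same value results from infimizing instead over $\psi \in \cH(X,-K_X)^K$.

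Monotonicity is immediate from the infimum: if $\phi \leq \phi'$, every bounded $K$-invariant majorant of $\phi'$ is also a majorant of $\phi$. For upper-semicontinuity in $L^1$, if $\phi_j \to \phi$ in $L^1$, Hartogs' lemma gives $\phi_j \leq \phi + \e$ eventually (off a set controlled by $\omega_0^n$), whence by monotonicity $\limsup_j E_g(\phi_j) \leq E_g(\phi + \e) = E_g(\phi) + O(\e) \to E_g(\phi)$. For continuity along a non-increasing sequence $\phi_j \searrow \phi$, truncate to $\phi_j^{(k)} := \max(\phi_j, \phi_0 - k)$ and use Bedford--Taylor continuity of $\omega_{\phi_j^{(k)}}^\ell$ together with continuity of $g$ and the cocycle formula to obtain $E_g(\phi_j^{(k)}) \to E_g(\phi^{(k)})$ for each fixed $k$; then exchange the $j$ and $k$ limits using the uniform bound \eqref{bound of g-MA} and monotonicity. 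The main technical obstacle is the closedness step above, whose validity hinges precisely on the $K$-invariance of the tangent directions: the weight $g \circ m$ breaks the obvious symmetry of standard Monge-Amp\`ere manipulations, and $K$-invariance combined with the Hamiltonian structure of $g(m)$ is exactly what makes the asymmetric cross terms cancel.
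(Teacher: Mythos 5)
Your overall architecture --- a primitive defined by a line integral on $\cH(X,-K_X)^K$, extension to bounded psh metrics by Bedford--Taylor, then to all of $\PSH(X,-K_X)^K$ by infimum over bounded majorants, with monotonicity, upper semicontinuity and decreasing-continuity read off from the corresponding properties of the mixed modified Monge--Amp\`ere measures --- is exactly the route of \cite{BWN14} that the paper follows; the paper itself only sketches this, writing $E_g$ in closed form along the linear path $\phi_t=(1-t)\phi+t\phi_0$ and deferring the extension properties to Definition \ref{modified MA measure}.

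The step that does not survive as written is the closedness argument. You assert that the moment map is $\phi$-independent and that closedness of $\phi\mapsto\MA_g(\phi)$ amounts to the symmetry $\int_X u\,g(m)\,dd^cv\wedge\omega^{n-1}=\int_X v\,g(m)\,dd^cu\wedge\omega^{n-1}$. Both claims fail for non-constant $g$: the Hamiltonians do depend on the potential (the paper itself uses $m_{\phi_t}=t\,m_\phi+(1-t)\,m_{\phi_0}$ two lines after the statement), and by Stokes the difference of the two sides of your identity is $\int_X\bigl(u\,d^cv-v\,d^cu\bigr)\wedge d(g\circ m_\phi)\wedge\omega_\phi^{n-1}$, which has no reason to vanish on its own. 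The correct symmetry-of-the-Hessian computation is
\[
\int_X v\Bigl(\sum_i\partial_ig(m_\phi)\,\xi_i(u)\Bigr)\omega_\phi^n
+n\int_X v\,g(m_\phi)\,dd^cu\wedge\omega_\phi^{n-1}
=\bigl(u\leftrightarrow v\bigr),
\]
where $\xi_i(u)=\delta_u m^i_\phi$ is precisely the first variation of the Hamiltonians; it is this term, absent from your identity, that pairs with the cross terms in $d(g\circ m_\phi)$ produced by integration by parts (via $dm^i=-\iota_{\xi_i}\omega_\phi$ and the invariance of $u,v$) and makes everything cancel. So the cancellation mechanism you name is the right one, but it only operates once the $\phi$-dependence of $m_\phi$ is restored; as stated, your reduction would have you proving a false identity. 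A smaller point: for upper semicontinuity, Hartogs' lemma does not give $\phi_j\leq\phi+\e$ pointwise when $\phi$ is unbounded; the standard repair, compatible with the rest of your argument, is to compare with the decreasing envelopes $(\sup_{k\geq j}\phi_k)^*$ and invoke monotonicity together with continuity along decreasing sequences.
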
 

We would write $E_\eta := E_g$ in the case (\ref{choice of g}). 
The description of $E_g$ is easily specified so we briefly sketch it. 
For the path $\phi_t= (1-t)\phi +t \phi_0$ the demanded $E_g$ is computed as 
\begin{align*}
E_g(\phi) 
&= \int_0^1\frac{d}{dt} E_g(\phi_t) dt 
=\frac{1}{V}\int_0^1 dt \int_X (\phi-\phi_0)g(m_{\phi_t})\omega_{\phi_t}^n \\
&=\frac{1}{V}\sum_{i=0}^n \bigg(\begin{matrix} n \\ i \end{matrix}\bigg) 
\int_0^1t^i(1-t)^{n-i} dt \int_X (\phi-\phi_0) g(m_{\phi_t}) \omega_\phi^i \wedge \omega_0^{n-i}. 
\end{align*}
Note $m_{\phi_t}=t m_\phi +(1-t)m_{\phi_0}$ and that the last integrant is just a variant of modified Monge-Amp\`ere measure. 
Therefore we may exploit Definition \ref{modified MA measure} to derive the required property of $E_g$. 
If $\inf_P g$ is positive it follows  
\begin{equation}\label{E_g vs E}
(\sup_P g) E(\phi) \leq E_g(\phi) \leq (\inf_P g)E(\phi) 
\end{equation} 
provided $\sup_X (\phi-\phi_0)=0$.  
It implies that $E_g(\phi) > -\infty$ if $\phi$ has finite Monge-Amp\`ere energy. 
At any case we define the $g$-modified J-energy by 
\begin{equation}
J_g(\phi):= L_0(\phi)-E_g(\phi).
\end{equation}
Let again $J_\eta := J_g$ in the case (\ref{choice of g}). 
This is after all equivalent to the ordinal $J$-functional. 

\begin{lem}\label{J_g vs J}
When $g>0$, we have 
\begin{equation*}
(\inf_P g) J_g(\phi) \leq J(\phi) \leq (\sup_P g) J_g(\phi) 
\end{equation*}
for all $\phi \in \cH(X, -K_X)^K$. 
\end{lem}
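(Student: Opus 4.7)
The plan is to reduce to the normalized case $\sup_X(\phi - \phi_0) = 0$ and then deduce the claim directly from the already-established estimate (\ref{E_g vs E}).

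First I would note that $J$ and $J_g$ are both translation-invariant: $L_0(\phi + c) = L_0(\phi) + c$, and both $E$ and $E_g$ shift by the same $c$. The first shift is standard since $V^{-1}\omega_\phi^n$ is a probability measure; for the second, differentiating $s \mapsto E_g(\phi + sc)$ using $(dE_g)_\phi = \MA_g(\phi)$ and pushing forward to $P$ via Theorem \ref{DH measure} yields the rate $c \int_P g \, \DH_T$, which for the $g$ of (\ref{choice of g}) equals $1$ (the constant term contributes $\int_P \DH_T = 1$ and the centred linear term integrates to zero against $\DH_T$). Hence the $c$-shifts cancel inside $J = L_0 - E$ and $J_g = L_0 - E_g$.

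Having this in hand, I would replace $\phi$ by $\phi - L_0(\phi)$ and assume $\sup_X(\phi - \phi_0) = 0$, i.e.\ $\phi \leq \phi_0$ pointwise, so $E(\phi) \leq 0$, $E_g(\phi) \leq 0$, $J(\phi) = -E(\phi)$, and $J_g(\phi) = -E_g(\phi)$. Equation (\ref{E_g vs E}) then reads $(\sup_P g) E(\phi) \leq E_g(\phi) \leq (\inf_P g) E(\phi)$; negating all three terms flips the chain into a two-sided comparison between $J$ and $J_g$ with multiplicative constants $\inf_P g$ and $\sup_P g$, which after rearrangement is the claim. Untranslating returns the bound to the original $\phi$.

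The lemma is essentially a cosmetic repackaging of (\ref{E_g vs E}), so I do not anticipate any real obstacle. The only point worth spelling out is the translation-equivariance of $E_g$, which relies on the normalization $\int_P g \, \DH_T = 1$; this is automatic for the $g$ of (\ref{choice of g}) used throughout the paper, while a generic $g$ would contribute an extra term $c(1 - \int_P g \, \DH_T)$ that one would have to absorb by hand.
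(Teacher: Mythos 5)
Your argument is essentially identical to the paper's: normalize so that $\sup_X(\phi-\phi_0)=0$ via translation invariance and then read the claim off from (\ref{E_g vs E}); your extra remark that the translation invariance of $J_g$ requires $\int_P g\,\DH_T=1$ (automatic for the $g$ of (\ref{choice of g})) is a worthwhile clarification that the paper leaves implicit. The only caveat, shared equally by the paper's own proof, is that the literal rearrangement of $(\inf_P g)\,J(\phi)\leq J_g(\phi)\leq(\sup_P g)\,J(\phi)$ yields the constants $(\sup_P g)^{-1}$ and $(\inf_P g)^{-1}$ rather than $\inf_P g$ and $\sup_P g$ as displayed in the statement; this is harmless since every subsequent use of the lemma only needs the equivalence of $J$ and $J_g$ up to positive multiplicative constants.
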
 
\begin{proof}
Since $J_g(\phi+c)=J_g(\phi)$ for any constant $c \in \R$, we may assume $\sup_X(\phi-\phi_0)=0$. 
The claim is then a consequence of (\ref{E_g vs E}). 
\end{proof} 
 
For a given probability measure $\mu$ one can consider the Monge-Amp\`ere type equation $\MA_g(\phi)=\mu$. 
It was also shown in \cite{BWN14} Theorem $2.18$ that 
there exists the unique solution $\phi \in \cE^1(X, -K_X)^K$ iff 
the Legendre dual of the Monge-Amp\`ere energy 

\begin{equation}\label{E_g^*}
E_g^*(\mu) := 
\sup_{\phi \in \cE^1(X, -K_X)} \bigg[ E_g(\phi) -\int_X (\phi -\phi_0) d\mu\bigg]
\in \R \cup \{ \infty \} 
\end{equation}

is finite. Moreover, the above supremum is attained by the solution. 
We denote the dual of $E_\eta$ by $E_\eta^*$. 

What we will study is the $g$-modified D-energy $D_g(\phi) := L(\phi)-E_g(\phi)$ and the equation (\ref{weak Mabuchi soliton}). 
The convexity of $L$-functional follows from the main result of \cite{Bern11}, while 
a direct computation shows the fiber integration formula:  
\begin{equation}\label{fiber integration formula for E}
dd^c_\tau E_g(\phi^t) = \frac{1}{(n+1)V} \int_X g(m_\Phi)(dd^c_{x, \tau} \Phi(x, \tau))^{n+1}, 
\end{equation}
which generalizes (\ref{Hessian of E}). 
We observe that $E_g$ is affine along any psh geodesic. 
Moreover, if $g>0$, the affineness of $E_g(\phi^t)$ and $E(\phi^t)$ are equivalent for any ray $\phi^t$ on $\cE(X, \omega_0)$. 
In particular the affineness of $E_g(\phi^t)$ implies that $\phi^t$ is a psh geodesic. 
As a consequence we obtain the convexity of the modified $D$-energy. 

\begin{thm}[\cite{Bern11}, \cite{BWN14} ]\label{D is convex} 
The $g$-modified D-energy $D_g(\phi) := L(\phi)-E_g(\phi)$ is convex along any weak geodesic $\phi^t$ ($t \in [a, b]$) which is a bounded solution of (\ref{degenerate MA equation}). 
\end{thm}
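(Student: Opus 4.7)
\medskip

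\noindent\textbf{Proof plan.}
The strategy is to split $D_g=L-E_g$ into a convex piece and an affine piece along the weak geodesic $\phi^t$. Concretely, I would establish:
\begin{itemize}
\item[(i)] $L(\phi^t)$ is convex in $t$;
\item[(ii)] $E_g(\phi^t)$ is affine in $t$.
\end{itemize}
Taking the difference then gives convexity of $D_g(\phi^t)$.

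For (i), let $A\subset\C^*$ be the annulus corresponding to $t\in[a,b]$ and let $\Phi(x,\tau)=\phi^{-\log|\tau|}(x)$ be the bounded psh function on $X\times A$ associated with the weak geodesic. Berndtsson's subharmonicity theorem \cite{Bern11}, applied to the trivial family $X\times A\to A$ with the bounded psh weight $\Phi$ on the relative anticanonical bundle, says that $\tau\mapsto -\log\int_X e^{-\Phi(\cdot,\tau)}$ is psh on $A$. Since $\Phi$ depends on $\tau$ only through $|\tau|$, this psh function is $S^1$-invariant, so it is the pullback of a convex function of $t=-\log|\tau|$; this is exactly $L(\phi^t)$.

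For (ii), I would use the fiber integration formula (\ref{fiber integration formula for E}). In the smooth case the identity
\begin{equation*}
dd^c_\tau E_g(\phi^t)=\frac{1}{(n+1)V}\int_X g(m_\Phi)(dd^c_{x,\tau}\Phi)^{n+1}
\end{equation*}
is the computation spelled out just before the theorem. To pass to the bounded psh geodesic, approximate $\Phi$ from above by a decreasing sequence $\Phi_j$ of smooth $K$-invariant psh functions solving a strictly positive Monge--Amp\`ere problem; $g\circ m_{\Phi_j}$ is uniformly bounded on the compact polytope $P$ (independently of $j$, by Theorem \ref{DH measure} and (\ref{bound of g-MA})), and the continuity of the non-pluripolar Monge--Amp\`ere along monotone sequences on $\cE(X,-K_X)^K$ lets us pass to the limit in the fiber integration formula. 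Since $\Phi$ is a weak geodesic, $(dd^c_{x,\tau}\Phi)^{n+1}=0$ as a Bedford--Taylor measure, so the right-hand side is zero. Thus $dd^c_\tau E_g(\phi^t)=0$; because $E_g(\phi^t)$ is moreover $S^1$-invariant in $\tau$, this forces $t\mapsto E_g(\phi^t)$ to be affine.

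The main technical point is (ii): making the fiber integration formula rigorous at bounded (non-smooth) $\Phi$ with integrand $g(m_\Phi)$ depending on $\Phi$ through its moment map. The pieces needed, however, are already in the paper, namely the continuity of $\MA_g$ along monotone sequences (Theorem \ref{property of E_g}) and the uniform bound on $g\circ m_\Phi$ coming from the fixed polytope $P$. Step (i) is then just a citation of Berndtsson's theorem, and the conclusion follows by subtraction.
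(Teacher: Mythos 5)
Your proposal is correct and follows essentially the same route as the paper: the text immediately preceding the theorem derives convexity of $L$ from the main result of \cite{Bern11} and affineness of $E_g$ from the fiber integration formula (\ref{fiber integration formula for E}) applied to the degenerate Monge--Amp\`ere equation (\ref{degenerate MA equation}), then subtracts. Your extra care in justifying the fiber integration formula for bounded non-smooth $\Phi$ via monotone approximation and the uniform bound on $g\circ m_\Phi$ is a reasonable filling-in of details the paper leaves to \cite{BWN14}.
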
 

We will prepare the following lemma which can be seen as a variant of Theorem \ref{D is convex}. 

\begin{lem}\label{d_g is affine}
Let $D_\eta \colon \cE^1(X, \omega_0)^K \to \R$ be the modified D-energy. 
For each $\phi \in \cE^1(X, -K_X)^K$, the map ${\mathbf d}_\eta \colon K_\C \to \C$ defined by ${\mathbf d}_\eta(g):= D_\eta(g^* \phi) $ is pluriharmonic. 
In particular if $D_\eta$ is bounded from below, then ${\mathbf d}_\eta$ is constant on the center. 
\end{lem}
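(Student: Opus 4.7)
The plan is to split $D_\eta = L - E_\eta$ and show each summand is pluriharmonic in $g \in K_\C$. Since pluriharmonicity is tested on holomorphic discs, throughout I fix a holomorphic map $\tau \in U \mapsto g_\tau \in K_\C$ with $U \subset \C$ open. For the $L$-piece, because $e^{-\phi}$ transforms as a volume form on $X$ the substitution rule gives $\int_X e^{-g_\tau^*\phi} = \int_X e^{-\phi}$, so $L(g_\tau^*\phi) = L(\phi)$ is constant in $\tau$, a fortiori pluriharmonic.

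For the $E_\eta$-piece, set $\Phi(\tau, x) := g_\tau^*\phi(x)$, viewed as a psh weight on $p_X^*(-K_X)$ over $U \times X$, and let $G_{\mathrm{eval}} \colon U \times X \to X$, $(\tau,x) \mapsto g_\tau x$. The pullback formula for weights gives
\[
\Phi(\tau, x) = G_{\mathrm{eval}}^*\phi(\tau, x) - \log\abs{J_{g_\tau}(x)}^2,
\]
and the second summand is pluriharmonic because $(\tau, x) \mapsto J_{g_\tau}(x)$ is holomorphic. Consequently $dd^c\Phi = G_{\mathrm{eval}}^*\omega_\phi$ as currents on $U \times X$, and $(dd^c\Phi)^{n+1} = G_{\mathrm{eval}}^*(\omega_\phi^{n+1}) = 0$ by dimension. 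Substituting into the fiber-integration formula (\ref{fiber integration formula for E}) yields $dd^c_\tau E_\eta(g_\tau^*\phi) = 0$ when $\phi$ is smooth.

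For general $\phi \in \cE^1(X,-K_X)^K$, I approximate by a decreasing sequence $\phi_j \searrow \phi$ of smooth $K$-invariant weights. Monotonicity and continuity of $E_\eta$ along decreasing sequences (Theorem \ref{property of E_g}) give $E_\eta(g_\tau^*\phi_j) \searrow E_\eta(g_\tau^*\phi)$ pointwise in $\tau$. Each $E_\eta(g_\tau^*\phi_j)$ is harmonic in $\tau$ by the smooth case, and a decreasing pointwise limit of harmonic functions with finite limit is again harmonic (by the mean value identity combined with monotone convergence). Together with constancy of $L$, this shows $\mathbf{d}_\eta$ is pluriharmonic on $K_\C$.

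For the "constant on the center" claim: since $\phi$ is $K$-invariant, $\mathbf{d}_\eta(kg) = \mathbf{d}_\eta(g)$ for $k \in K$, so $\mathbf{d}_\eta$ restricted to the identity component $Z(K_\C)^0 \cong (\C^*)^r$ is pluriharmonic and invariant under the compact torus $(\S^1)^r$. A short computation in polar coordinates shows any such function is affine linear in the coordinates $\log\abs{z_j}$, and boundedness from below forces every linear coefficient to vanish, so $\mathbf{d}_\eta$ is constant on $Z(K_\C)^0$. The main technical point I anticipate is the vanishing $(dd^c\Phi)^{n+1} = 0$ at the weight level, which requires treating $\Phi$ carefully as a psh weight and applying the transformation rule under the biholomorphism $G_{\mathrm{eval}}$; the rest (constancy of $L$, fiber integration, the approximation step, and torus rigidity) is essentially bookkeeping.
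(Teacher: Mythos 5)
Your proof takes essentially the same route as the paper's: the key step for $E_\eta$ is the fiber-integration formula combined with the vanishing $(dd^c_{\tau,x}F^*\phi)^{n+1}=F^*(dd^c\phi)^{n+1}=0$ for the evaluation map $F(\tau,x)=g_\tau x$, and your extra details (the decreasing approximation for general $\phi\in\cE^1$ and the torus-rigidity argument on the center) correctly fill in what the paper leaves implicit. One small overclaim: $L(g_\tau^*\phi)$ need not be literally constant, since $K_\C\subset\Aut^0(X,-K_X)=\Aut^0(X)\times\G_m$ may move in the fiber-scaling direction, which shifts $L$ by $\log\abs{\chi(g_\tau)}^2$ for a character $\chi$; this is still pluriharmonic, which is all the lemma requires.
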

\begin{proof}
This is similar to \cite{His18}, Theorem $1.6$ and Remark $2.6$ which are for the case $\eta=0$.  
The statement simply interprets geodecically affineness of $D_\eta$ into the complex variables. 

The log part is obviously pluriharmonic. 
We show that ${\mathbf e}_\eta(g):=E_\eta(g^*\phi)$ is pluriharmonic. 
Let us take an arbitrary holomorphic map $g \colon \Delta \to \Aut^0(X, L)$ which sends $z\in\Delta$ in the one dimensional disk to the automorphism $g(z)$. 
The formula \ref{fiber integration formula for E} is translated to  
\begin{equation*}
dd^c E_\eta(\phi_{g(z)}) = \frac{1}{(n+1)V} \int_X (1+h_{\phi_g, \eta})(dd^c_{z, x} \phi_{g(z)}(x))^{n+1}. 
\end{equation*}
For the holomorphic map $F\colon \Delta\times X \to X$ by $F(z, x):=g(z)\cdot x$ we have   
\begin{equation*}
(dd^c_{z, x} \phi_{g(z)}(x))^{n+1} 
= (dd^c_{z, x} F^*\phi)^{n+1} 
=F^* (dd^c_{x} \phi)^{n+1} =0.  
\end{equation*}
It implies that  ${\bf e}$ is pluriharmonic. 
\end{proof} 

Using the above lemma let us show that the weak minimizer of the energy is actually the Mabuchi soliton. This forms one of the critical step in our proof of Theorem A. 

\begin{thm}\label{minimizer is a weak solution}
Assume that $T$ contains the center of the complexified Lie group $K_\C$. 
If there exists a minimizer of modified D-energy $D_\eta \colon \cE^1(X, \omega_0)^K \to \R$ 
it defines a weak solution of (\ref{weak Mabuchi soliton}). 
\end{thm}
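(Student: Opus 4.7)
The plan is to adapt the envelope-perturbation technique of Berman--Boucksom--Jonsson to the weighted setting. Given a minimizer $\phi \in \cE^1(X,-K_X)^K$ of $D_\eta$, for each smooth $K$-invariant test function $v$ and small $t$ I consider the upper $K$-invariant PSH envelope $P(\phi+tv)$ of $\phi+tv$, that is, the upper semicontinuous regularization of the pointwise supremum of all $\psi \in \PSH(X,-K_X)^K$ with $\psi \leq \phi+tv$. Since $\phi + tv$ lies between $\phi - t\sup|v|$ and $\phi + t\sup|v|$, so does $P(\phi+tv)$, giving uniform convergence $P(\phi+tv) \to \phi$ as $t \to 0$ and membership of $P(\phi+tv)$ in $\cE^1(X,-K_X)^K$. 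The minimality $D_\eta(P(\phi+tv)) \geq D_\eta(\phi)$ is then to be converted into the Euler--Lagrange equation $\MA_g(\phi) = \mu_\phi$ via a one-sided first-variation argument at $t=0^{+}$.

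The key computation is the right-derivative at $t = 0^{+}$ of each piece of $D_\eta = L - E_g$. Adapting the Berman--Boucksom differentiation formula to the weighted measure, and using the monotone continuity of $E_g$ (Theorem \ref{property of E_g}) together with the comparison (\ref{bound of g-MA}), I expect
\[
\lim_{t \to 0^{+}} \frac{E_g(P(\phi+tv)) - E_g(\phi)}{t} = \int_X v\,\MA_g(\phi).
\]
For the log term, the monotonicity of $L$ and $P(\phi+tv) \leq \phi + tv$ give $L(P(\phi+tv)) \leq L(\phi+tv)$, while $\frac{d}{dt}\big|_{t=0} L(\phi+tv) = \int_X v\, d\mu_\phi$ is immediate. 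Combining these with the minimality inequality and sending $t \to 0^{+}$ yields $\int_X v\, \MA_g(\phi) \leq \int_X v\, d\mu_\phi$. Applying the same argument to $-v$ gives the reverse inequality, so equality holds for every smooth $K$-invariant $v$. Since both $\MA_g(\phi)$ and $\mu_\phi$ are $K$-invariant, averaging extends the identity to arbitrary smooth test functions, so $\MA_g(\phi) = \mu_\phi$ as Borel measures, which is precisely the weak Mabuchi soliton equation (\ref{weak Mabuchi soliton}).

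The main obstacle is the weighted envelope differentiation formula for $E_g$ at a potential $\phi \in \cE^1$ that may be unbounded: one must verify that the concentration of $\MA_g(P(\phi+tv))$ on the contact set $\{P(\phi+tv) = \phi + tv\}$ passes to the limit, which relies on the plurifine locality of the weighted measure (Definition \ref{modified MA measure}) and the monotone continuity of $E_g$. A structural point is that, by Lemma \ref{d_g is affine}, $D_\eta$ is constant on the orbits of the center of $K_\C$, so minimizers are unique only up to that action; the hypothesis that $T$ contains the center of $K_\C$ ensures that this degeneracy is already absorbed by the torus generating the soliton vector field, so envelope perturbations can be carried out within $\cE^1(X,-K_X)^K$ without loss of information, and the resulting weak solution is meaningful for the Mabuchi soliton problem.
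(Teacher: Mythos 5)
Your proposal follows the same core strategy as the paper's proof: perturb the minimizer by the $K$-invariant envelope $P(\phi+tv)$, differentiate $E_g\circ P$ at $t=0$ via the Berman--Witt Nystr\"om formula (\cite{BWN14}, Proposition 2.16), use the monotonicity of $L$ together with $P(\phi+tv)\le\phi+tv$ to trap $D_\eta$ between $L(\phi+tv)-E_\eta(P(\phi+tv))$ and its value at $\phi$, and conclude $\int_X v\,\MA_g(\phi)=\int_X v\,d\mu_\phi$ for all $K$-invariant $v$. Where you genuinely diverge is the passage from $K$-invariant test functions to arbitrary ones. You assert directly that $\MA_g(\phi)$ and $\mu_\phi$ are $K$-invariant measures and then average against the Haar measure; the paper instead establishes $K$-invariance of the difference measure $(dD_\eta)_\phi$ by showing that ${\mathbf d}_\eta(g)=D_\eta(g^*\phi)$ is constant on all of $K_\C$ --- pluriharmonicity (Lemma \ref{d_g is affine}) gives constancy on the center, and the vanishing of the Futaki character on the derived algebra of the reductive $\fk_\C$ kills the remaining slopes. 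This Futaki-character step is exactly where the hypothesis that $T$ contains the center of $K_\C$ enters, so your closing remark that the hypothesis serves to ``absorb the degeneracy of minimizers'' does not reflect its actual role in the argument. Your direct route is simpler and appears sound, but it rests entirely on the unproved assertion of $K$-invariance of the two measures for a merely finite-energy $\phi$: you should spell out that $g^*(e^{-\phi})=e^{-g^*\phi}=e^{-\phi}$ as volume forms, that the non-pluripolar Monge--Amp\`ere operator commutes with pullback by automorphisms (use a $K$-invariant reference in the canonical approximation), and that the density $g(m_\phi)$ is $K$-invariant because $K$ commutes with $T$. Your flagging of the weighted envelope differentiation formula at unbounded $\phi\in\cE^1$ as the main analytic obstacle is consistent with the paper, which simply quotes \cite{BWN14} for it.
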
 
\begin{proof}
For a function $v$ we define the point-wise upper envelope 
\begin{equation*}
Pv := \sup \bigg\{ \psi \in \PSH(X, -K_X)^K, \psi \leq v  \bigg\}. 
\end{equation*}
The proof is due to the highly non-trivial derivation formula (\cite{BWN14}, Proposition 2.16):  
\begin{equation*}
\frac{d}{dt}\bigg\vert_{t=0} E_g(P(\phi+tu))
=\int_X u\MA_g(\phi) 
\end{equation*}
for $\phi \in \cE^1(X, -K_X)^K$, $u \in C^0(X; \R)^K$. 
This was first established in \cite{BB10} for $g=1$, $K=\{ \id \}$ case. 
If $\phi$ is a minimizer of $D_\eta $, we observe 
\begin{align*}
f(t)&:= L(\phi+tu) -E_\eta(P(\phi+tu))\\
&\geq L(P(\phi+tu)) -E_\eta(P(\phi+tu)) \\
&\geq L(\phi) -E_\eta(\phi) =f(0). 
\end{align*} 
The derivation formula yields $f'(0)=0$ and hence 
\begin{equation}\label{for K-invariant u}
\int_X u\MA(\phi) =\int_X u\mu_\phi. 
\end{equation}
for every $u \in C^0(X; \R)^K$. 

We should show that the same holds for any $u \in C^0(X; \R)$. 
By Lemma \ref{d_g is affine}, ${\mathbf d}_\eta$ is constant on the center. 
We observe that for any one-parameter subgroup $\mu \in N$ the slope of ${\bf d}(\mu(e^{-t}))$ is equivalent to the classical Futaki character. 
Since the character is defined on the reductive Lie algebra $\fk_\C$ which can be written as the direct sum of the center and the derived algebra, the slopes are nontrivial only on the center. Therefore ${\bf d}_\eta$ is actually constant on whole $K_\C$. 
Thus the measure $\mu:= (d D)_\phi$ is $K$-invariant. It then follows that for any smooth function $v$ and $g \in K$ 
\begin{equation*}
\int_X v\mu =\int_X g_*(v\mu)
=\int_X ((g^{-1})^*v)g_*(\mu) 
=\int_X ((g^{-1})^*v)\mu.  
\end{equation*}
Integrating against the Haar measure we have 
\begin{equation*}
\int_X v \mu = \int_X u\mu =0  
\end{equation*}
so that $\mu=0$ as desired. 

Conversely, if $\phi \in \cE^1(X, -K_X)^K$ is a weak solution, convexity of $D_g$ implies that $\phi$ is a minimizer.  
\end{proof} 

In the next subsection a refinement of the latter argument will show the uniqueness of the weak Mabuchi soliton. 

Let us discuss about the coercivity of the modified D-energy. 
From now on we rather start from the extremal one-parameter subgroup $\eta$. 
Let 
\begin{equation}
\Aut(X, \eta) := \bigg\{ g \in \Aut(X, -K_X): \eta(\tau) g=g\eta(\tau) \text{ for all } \tau \in \G_m. \bigg\}  
\end{equation}
The identity component is denoted by $\Aut^0(X, \eta)$.  
We take afresh $T=C(\Aut^0(X, \eta))$ as the center of the automorphisms commuting with $\eta$. 
Moreover, we entirely consider a maximal compact subgroup $K$ containing $S$. It clearly commutes with the center. 
We set 
\begin{equation}
J_T(\phi) := \inf_{\s \in T} J (\s^*\phi). 
\end{equation}

\begin{dfn}\label{coercivity of D_g}
Let $T=C(\Aut^0(X, \eta))$ and $K$ be a maximal compact subgroup of $\Aut^0(X, \eta)$, which contains the compact part of $T$. 
We say that the modified D-energy is coercive if there exists a positive constants $\e, C$ such that 
\begin{equation*}
D_\eta (\phi) \geq \e J_T (\phi) -C 
\end{equation*}
holds for every invariant metric $\phi \in \cH(X, -K_X)^K$. 
\end{dfn} 


By the standard argument we may obtain the weak minimizer from the coercivity. 
Actually for a minimizing sequence $\phi_j$, we have $\s_j \in T$ by the coercivity such that $\s_j^*\phi_j$ is contained in the sublevel set $\{ J_\eta \leq C\}$. 
Since $\{ E \geq -C\} $ is weakly compact, we obtain a weakly convergent subsequence $\s_j^*\phi_j \to \phi$ in $\cE^1(X, -K_X)^K$. 
From Lemma \ref{d_g is affine} the map $\s \mapsto D_\eta(\s^*\phi_j)$ is constant. That is, $D_\eta$ must be $T$-invariant. 
Especially $D_\eta(\s_j^*\phi_j)=D_\eta(\phi_j)$. 
The lower-semicontinuity concludes that $\phi$ is a minimizer of $D_\eta$. 
Theorem \ref{minimizer is a weak solution} concludes that the obtained minimizer is a weak Mabuchi soliton. 
It is indeed a smooth Mabuchi soliton, by Theorem \ref{regularity of Mabuchi soliton}. 
Our goal in this subsection is: 

\begin{thm}[\cite{LZ17}]\label{metric vs coercivity}
A Fano manifold $X$ admits a Mabuchi soliton if and only if $m_X >0$, $\Aut^0(X, \eta)$ is reductive, and the modified D-energy is coercive.   
\end{thm}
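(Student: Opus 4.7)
The plan is to prove the equivalence in two directions, drawing on the groundwork already laid in the preceding subsections.

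\textbf{Sufficiency (coercivity $\Rightarrow$ existence).} The passage immediately before the theorem essentially sketches this direction, so I would formalize it as follows. Given a minimizing sequence $\phi_j \in \cH(X,-K_X)^K$ for $D_\eta$, translate each by a scalar so that $\sup_X(\phi_j-\phi_0)=0$; by the remark following \eqref{definition of J}, $J(\phi_j)$ stays within a bounded distance of $d_1(\phi_j,\phi_0)$. Using the $T$-invariance of $D_\eta$ (from Lemma \ref{d_g is affine}), replace $\phi_j$ by $\sigma_j^*\phi_j$ with $\sigma_j\in T$ chosen so that $J(\sigma_j^*\phi_j) = J_T(\phi_j)$; by coercivity and Lemma \ref{J_g vs J}, the resulting sequence lies in some sublevel set $\{E\geq -C\}$. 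Weak $L^1$-compactness of this set yields a weak limit $\phi\in\cE^1(X,-K_X)^K$. Using upper semicontinuity of $E_\eta$ and lower semicontinuity of $L$ in the weak topology, $\phi$ is a minimizer of $D_\eta$. Theorem \ref{minimizer is a weak solution} then shows $\phi$ is a weak Mabuchi soliton, and the regularity theorem \ref{regularity of Mabuchi soliton} promotes $\phi$ to a smooth one.

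\textbf{Necessity (existence $\Rightarrow$ the three conditions).} Assume $\omega_\phi$ is a smooth Mabuchi soliton with $e^\rho - 1 = h_\eta$. The condition $m_X > 0$, \ie $1+h_\eta>0$ on $X$, is immediate since $e^\rho>0$; the fact that this is a numerical condition (independent of the metric) means it transfers to the statement $\inf_P(1+\langle\eta,\cdot\rangle)>0$ on the moment polytope. Reductivity of $\Aut^0(X,\eta)$ is the Matsushima-type theorem mentioned in the introduction (\cite{Mab03}, \cite{Nak18}); I would give a self-contained derivation by observing that Theorem B (uniqueness) implies that the isotropy of $\phi$ in $\Aut^0(X,\eta)$ is a real form of $\Aut^0(X,\eta)$, hence the latter is the complexification of a compact group.

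\textbf{Coercivity.} This is where the main difficulty lies and where I would spend the most effort. The strategy is the standard contradiction argument used in BBJ-type proofs, adapted to the relative setting. Suppose $D_\eta$ is not coercive: there exist $\phi_j\in\cH(X,-K_X)^K$ with $D_\eta(\phi_j)/J_T(\phi_j)\to 0$ and $J_T(\phi_j)\to\infty$. After translating by $T$ one may arrange $J(\phi_j)=J_T(\phi_j)$, which ensures the torus action does not absorb the divergence. Normalize by a constant so $E(\phi_j)=0$, and extract a subsequential weak psh-geodesic ray $\phi_t$ emanating from a reference $\phi_0$, parametrized so that $d_1(\phi_0,\phi_t)=t$ and with $\phi_t\in\cE^1(X,-K_X)^K$; convexity of $D_\eta$ along the ray (Theorem \ref{D is convex}) combined with the assumption gives a nonpositive asymptotic slope. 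By the uniqueness of the Mabuchi soliton together with reductivity, an argument analogous to that in Lemma \ref{d_g is affine} forces the ray to come from a one-parameter subgroup in $T_\C$, which contradicts the optimal-positioning condition $J(\phi_j)=J_T(\phi_j)$ (since a $T$-twist would strictly reduce $J$ along the ray). The main obstacle is producing the ray with controlled $d_1$-speed and carrying the $K$-invariance and the $J_T$-optimal normalization through the limit; this requires the compactness of Theorem \ref{completeness} and careful use of the fact that $E_\eta$ is geodesically affine whereas $L$ is convex, so that the asymptotic slope of $D_\eta$ is well-defined and nonnegative.
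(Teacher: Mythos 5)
Your sufficiency direction and your treatment of $m_X>0$ and reductivity essentially reproduce the paper's own argument: the first is the compactness argument sketched just before the theorem (weak compactness of $\{E\geq -C\}$, $T$-invariance of $D_\eta$ via Lemma \ref{d_g is affine}, then Theorem \ref{minimizer is a weak solution} and Theorem \ref{regularity of Mabuchi soliton}), and the second is Proposition \ref{m_X formula} together with Corollary \ref{reductivity}. The coercivity direction is where you depart from the paper, and there your route has a genuine gap. The paper does not run a destabilizing-ray contradiction here at all: it verifies the hypotheses of the Darvas--Rubinstein properness principle (\cite{DR15}, Theorem 3.4), namely regularity of weak minimizers and, crucially, that the \emph{center} $T=C(G)$ acts transitively on the set of smooth Mabuchi solitons. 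That transitivity is the real content of the paper's proof: one shows $C_K(G)=C(G)$ and $N_K(G)=KC(G)$ using maximality of $K$ and reductivity, and then upgrades the $G$-transitivity of Theorem \ref{uniqueness} to transitivity of $T$ modulo $K$. Without this group-theoretic step one only obtains coercivity relative to $\inf_{g\in G}J(g^*\phi)$ rather than $J_T$, which is the wrong statement for the equivariant formulation used throughout the paper.

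The unresolved step in your ray argument is the claim that uniqueness plus reductivity ``forces the ray to come from a one-parameter subgroup in $T_\C$, which contradicts the optimal-positioning condition.'' Two things fail. First, Proposition \ref{holomorphic vector field} (Berndtsson's theorem) requires the geodesic to satisfy $\abs{\phi^t-\phi_0}\leq C$ uniformly; the limit ray of a non-coercive sequence is an unbounded ray in $\cE^1$, so you cannot invoke it to produce an automorphism generating the ray. (You can conclude that each $\phi^t$ is a minimizer, hence a soliton, hence of the form $\sigma_t^*\phi^0$ with $\sigma_t\in T$ --- but only after the transitivity of $T$ has been established, which is exactly the work you skipped.) Second, and more seriously, the normalization $J(\phi_j)=J_T(\phi_j)$ does not pass to the limit: the infimum over the noncompact torus $T$ need not commute with $j\to\infty$, so you cannot conclude $J_T(\phi^t)\to\infty$ along the ray and hence cannot derive the contradiction. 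This is precisely the difficulty the paper identifies as the ``serious error'' of its first version, resolved following \cite{Li19} by Lemma \ref{boundedness} and Lemma \ref{non-triviality} together with the non-Archimedean approximation --- and that machinery appears in the proof of Theorem A, not of this theorem. Invoking \cite{DR15} is what lets the paper sidestep all of it here.
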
 

We have already explained that the coercivity implies the existence of the metric. 
The converse direction is based on \cite{DR15}. 
For the Mabuchi soliton \cite{LZ17} already obtained a result adopting a different definition of the coercivity. 
Since the difference of the formulation is subtle point for the equivariant setting, we briefly sketch the proof in our framework.  

\begin{proof} 
Let $\phi$ be the Mabuchi soliton.
Trivially $K \subset \Aut(M, \phi)$ so the maximality implies $K =\Aut(M, \phi)$. 
Let $G=\Aut^0(X, \eta)$. 
By Corollary \ref{reductivity} $G=K_\C$ is reductive. 
We consider the normalizer and the centralizer 
\begin{align*}
&N_K(G):= \{ g \in G: gkg^{-1} \subset K\}, \\
&C_K(G):= \{ g \in G: gkg^{-1}=k \text{ for every } k\in K\}. 
\end{align*} 
We first observe $C_K(G) =C(G)$. 
Indeed any $t \in C_K(G)$ we have the map $\tau \colon G \to G$ defined by $ \tau (g) =tgt^{-1}$ and this is identical for $g \in K$. 
By Corollary \ref{reductivity} it implies that $\tau$ is identical on $G$. 

From the general theory of Lie groups we know that $N_K(G)/KC_K(G)$ is finite. 
Let us show $N_K(G)=KC(G)$ in our situation. 
Since $N_K(G)/KC_K(G)$ is finite, we may write $N_K(G) =K' C_K(G)=K'C(G)$ for some maximal compact subgroup $K'$. 
By construction $K \subset K' $ so the maximality of $K$ implies $N_K(G) =KC(G)$.  

We now check that $T=C(G)$ acts transitively on the smooth Mabuchi solitons. 
By Theorem \ref{uniqueness}, for two Mabuchi solitons $\phi$ and $\phi'$ we have $f \in \Aut^0(X, \eta)$ such that $f^*\phi = \phi'$. 
Since $K = \Aut(M, \phi)=\Aut(M, \phi')$ as we have already observed, it follows $f^{-1}Kf \subset K$. Namely, $f \in N_K(G)=KC(G)$. 

The above transitivity of $T$ and the regularity of weak minimizers (Theorem \ref{regularity of Mabuchi soliton}), we may apply \cite{DR15}, Theorem $3.4$ (with $\cR=\cH(X, -K_X)^K$, $G=C(\Aut^0(X, \eta))$ there) so that have constants $\e, C$ and 
\begin{equation}
D_\eta(\phi) \geq \e \inf_{\s \in T} J(\s^*\phi) -C 
\end{equation} 
for every $\phi \in \cH(X, -K_X)^K$. 
\end{proof} 

\subsection{Uniqueness of Mabuchi soliton}\label{uniqueness of Mabuchi soliton}

We shall first check the regularity.  
In \cite{LZ17} the corresponding step is carried out by the continuity method assuming the coercivity. We here introduce a direct argument. 

\begin{thm}\label{regularity of Mabuchi soliton} 
Assume $m_X = \inf_X (1+h_\eta)$ is strictly positive. 
Then the weak Mabuchi soliton of (\ref{weak Mabuchi soliton}) is actually smooth. 
\end{thm}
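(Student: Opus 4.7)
The plan is to adapt the pluripotential regularity theory developed in \cite{BBEGZ16} for weak K\"ahler-Einstein metrics (and its soliton extension \cite{BWN14}) to the present Monge-Amp\`ere equation $\MA_g(\phi) = \mu_\phi$. The hypothesis $m_X > 0$ is precisely what makes the weight $g(m_\phi) = 1 + h_\eta$ uniformly bounded below: indeed by (\ref{extremal is orthogonal to constant}) the constant subtracted in (\ref{choice of g}) vanishes, so $g(m_\phi(x)) = 1 + h_\eta(x) \geq m_X > 0$, and compactness of the moment polytope $P$ gives a matching upper bound $\sup_P g < +\infty$.

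First I would rewrite the weak equation as
\begin{equation*}
\omega_\phi^n = \frac{V\, e^{-\phi}}{g(m_\phi) \int_X e^{-\phi}},
\end{equation*}
in which the prefactor on the right is uniformly bounded above by $m_X^{-1} V \big(\int_X e^{-\phi}\big)^{-1}$ independently of any regularity of $m_\phi$. That both sides are probability measures forces $\phi \in \cE(X, -K_X)^K$ to have full Monge-Amp\`ere mass; the convexity of $D_\eta$ along psh geodesics (Theorem \ref{D is convex}) shows that any weak solution $\phi$ is in fact a minimizer of $D_\eta$, so $L(\phi)$ and $E_\eta(\phi)$ (equivalently $E(\phi)$ by (\ref{E_g vs E})) are both finite and $\phi \in \cE^1(X, -K_X)^K$. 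The standard Skoda integrability estimate then yields $e^{-\phi} \in L^{1+\e}$ for some $\e > 0$, and Ko{\l}odziej's $L^\infty$-estimate concludes that $\phi \in L^\infty(X)$. Dinew-Ko{\l}odziej stability upgrades this to H\"older continuity.

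With $\phi$ continuous, $m_\phi$ is continuous, so $g(m_\phi)$ and $e^{-\phi}$ are continuous and bounded away from $0$ and $\infty$; the right-hand side of the Monge-Amp\`ere equation is then a continuous, strictly positive function. A standard Schauder/Evans-Krylov-Caffarelli bootstrap closes the proof: assuming inductively that $\phi \in C^{k, \a}$, the moment map $m_\phi$ (which is determined by the $T$-directional first derivatives of $\phi$) lies in $C^{k-1, \a}$, so the right-hand side lies in $C^{k-1, \a}$, and interior complex Monge-Amp\`ere regularity returns $\phi \in C^{k+1, \a}$. Iterating gives $\phi \in C^\infty$.

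The principal difficulty is the self-referential character of the equation, since the density on the right depends on $\phi$ through both $e^{-\phi}$ and the derivative-type quantity $g(m_\phi)$; this prevents a direct invocation of scalar Monge-Amp\`ere regularity for a fixed right-hand side. It is handled in two separate stages: at the $L^\infty$ step only the a priori bound $g(m_\phi)^{-1} \leq m_X^{-1}$ is used, which requires no control on $m_\phi$; in the subsequent bootstrap, differentiating $m_\phi$ costs only one derivative while complex Monge-Amp\`ere gains two, so the iteration is strictly gaining and terminates in full smoothness.
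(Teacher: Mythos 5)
Your overall strategy coincides with the paper's: use $m_X>0$ together with the two-sided bound (\ref{bound of g-MA}) to reduce to a Monge-Amp\`ere equation whose density is controlled by $\mu_\phi$ alone, invoke the uniform Skoda integrability theorem to get an $L^p$-density for $p>1$, deduce continuity, and then bootstrap to smoothness. Your route to continuity (Kolodziej's $L^\infty$-estimate followed by stability estimates for H\"older continuity) differs from the paper's, which applies the viscosity theory of \cite{EGZ11}, Theorem C, directly to $\MA_g(\phi)=\mu_\phi$; either is acceptable at this stage, since, as you correctly emphasize, only the inequality $\MA(\phi)\leq m_X^{-1}\mu_\phi$ is needed and no regularity of $m_\phi$ enters.

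The gap is in the higher-regularity step. The sentence ``with $\phi$ continuous, $m_\phi$ is continuous'' is false: $m_\phi$ is defined through the first derivatives of $\phi$ along the torus directions, via $\langle\mu,m_\phi(x)\rangle=\frac{d}{dt}\big\vert_{t=0}\phi(\mu(e^t)x)$, so continuity of $\phi$ gives no pointwise control on $m_\phi$ whatsoever. Consequently your bootstrap has no base case: to run Evans--Krylov/Schauder you need $\phi$ a priori in $C^{1,1}$ (or $C^{2,\a}$) and the density in $C^\a$, but the density involves $g(m_\phi)$, i.e.\ first derivatives of the unknown, so you are facing a complex Monge--Amp\`ere equation with gradient-dependent right-hand side. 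The missing ingredient is a Yau-type $C^2$ a priori estimate adapted to this gradient-dependent (multiplier Hermitian) setting; this is precisely what the paper invokes by citing \cite{ST09}, Theorem 1, and deferring the details to \cite{ST19}. Your ``one derivative lost, two gained'' count is a correct heuristic for the induction $C^{k,\a}\Rightarrow C^{k+1,\a}$ once $k\geq 2$, but it does not produce the initial $C^{2,\a}$ bound, which is where the substantive analytic work lies and which your proposal leaves unaddressed.
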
 

\begin{proof} 
Since $\phi$ has finite Monge-Amp\`ere energy it has zero Lelong number (see \cite{GZ17}, Exercise 10.7). 
By the uniform version of Skoda's integrability theorem (\cite{GZ17}, Theorem 8.11), $\mu_\phi$ has $L^p$-density for any $p>1$. 
Noting (\ref{bound of g-MA}) and applying the viscosity theory: \cite{EGZ11}, Theorem C to $\MA_g(\phi) = \mu_\phi$, we deduce that $\phi$ is continuous. 
We may further show $\phi$ is $C^\infty$ essentially using Yau's $C^2$-estimate. For example one can apply the idea of \cite{ST09} Theorem 1 to the present setting. 
See \cite{ST19} for the detail exposition. 
\end{proof} 

For the uniqueness the fact $\phi \in L^\infty$ is important, because we need the following. 

\begin{prop}[\cite{Bern11}, Theorem 1.2]\label{holomorphic vector field}
Let $\phi^t$ be a weak geodesic which is uniformly bounded in the sense that $\abs{\phi^t -\phi_0} \leq C$. 
If the convex function $L(\phi^t)$ is affine, there exists a $f_t \in \Aut(X, -K_X)$ such that 
\begin{equation*}
f_t^* \omega_{\phi^t}=\omega_{\phi^0}. 
\end{equation*}
Moreover $f_t=\exp(-t\RE{v})$ for some holomorphic vector $v$ lifted to $-K_X$  such that $\IM v$ preserves $\omega_{\phi^t}$. 
\end{prop}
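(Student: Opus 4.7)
I would deduce the statement from B.~Berndtsson's theorem on positivity of direct image bundles together with its rigidity in the flat case. Set $\mathcal{A} := \{e^{-b}<|\tau|<e^{-a}\}$ and $\Phi(\tau,x) := \phi^{-\log|\tau|}(x)$; by the geodesic hypothesis, $\Phi$ is an $S^1$-invariant bounded psh function on $\mathcal{A}\times X$ satisfying $(dd^c\Phi)^{n+1}=0$, and I regard it as a (singular) Hermitian metric on $p_X^*(-K_X)$ for the projection $p_X\colon \mathcal{A}\times X\to X$. The direct image $p_{\mathcal{A}*}\bigl(K_{\mathcal{A}\times X/\mathcal{A}}\otimes p_X^*(-K_X)\bigr)$ is the trivial line bundle on $\mathcal{A}$, whose fibers identify with $H^0(X,\mathcal{O}_X)=\C$ and whose natural $L^2$-metric assigns to the unit section the squared norm $\tau\mapsto\int_X e^{-\Phi(\tau,\cdot)}$. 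Berndtsson's subharmonicity theorem, applied in this bounded-weight setting, gives that $-\log\int_X e^{-\Phi(\tau,\cdot)}$ is psh on $\mathcal{A}$, which by $S^1$-invariance is precisely the well-known convexity of $t\mapsto L(\phi^t)$.

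The affineness hypothesis now forces the curvature of this direct image line bundle to vanish identically on $\mathcal{A}$. This puts us in the rigidity situation of \cite{Bern11}, Theorem~1.2: Berndtsson's curvature formula expresses the curvature as the sum of a non-negative Bergman/horizontal term and a non-negative H\"ormander-type $\bar\partial$-term encoding the Kodaira-Spencer class of the family, and simultaneous vanishing produces a global holomorphic $(1,0)$-vector field $V$ on $\mathcal{A}\times X$ with $dp_\mathcal{A}(V)=\partial_\tau$ and $\mathcal{L}_V\Phi$ pluriharmonic.

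Using $S^1$-invariance of $\Phi$ and the triviality of the fibration $\mathcal{A}\times X\to\mathcal{A}$, one may average $V$ in $\tau$ so that $V=\partial_\tau+v$ with $v$ a $\tau$-independent holomorphic vector field on $X$. Integrating the flow of $\RE V$ along real $\tau=e^{-t}$ produces the family $f_t := \exp(-t\RE v)\in\Aut(X)$, and the pluriharmonicity of $\mathcal{L}_V\Phi$ integrates to $f_t^*\phi^t=\phi^0+c(t)$ for some real constant $c(t)$; taking $dd^c$ gives $f_t^*\omega_{\phi^t}=\omega_{\phi^0}$. The lift to $-K_X$ is canonical since the anticanonical bundle is functorial in $X$, yielding $f_t\in\Aut(X,-K_X)$. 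Finally, $\IM v$ preserves $\omega_{\phi^t}$ because the $S^1$-symmetry of $\Phi$ in $\tau$ is transported by $V$ to the $S^1$-subgroup generated by $\IM v$, under which $\Phi$ is invariant.

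The main obstacle is the extraction of a genuinely holomorphic $V$ from the vanishing-curvature condition when $\Phi$ is only bounded rather than smooth. This relies on the careful Bergman-kernel and H\"ormander $\bar\partial$-analysis of \cite{Bern11}, pushing the smooth-weight arguments to the bounded-weight setting via approximation. It is exactly here that the uniform estimate $|\phi^t-\phi_0|\leq C$ in the hypothesis becomes essential; without this control one cannot reliably make sense of the direct image metric or its curvature, nor of the limit procedure that produces the holomorphic horizontal lift.
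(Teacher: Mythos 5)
The paper gives no proof of this proposition at all: it is imported verbatim from \cite{Bern11}, Theorem 1.2, and your sketch is a faithful reconstruction of Berndtsson's own argument — convexity of $L(\phi^t)$ via positivity of the $L^2$-metric on the direct image of $K_{\mathcal{A}\times X/\mathcal{A}}\otimes p_X^*(-K_X)\cong\mathcal{O}_{\mathcal{A}\times X}$, followed by the equality analysis of the curvature formula to extract a holomorphic horizontal lift. You also correctly isolate the genuine technical crux of \cite{Bern11} (making the equality analysis and the holomorphicity of the resulting vector field work for merely bounded geodesics, which is where the hypothesis $\abs{\phi^t-\phi_0}\leq C$ enters), though be aware that the $\tau$-independence of $v$, which you dispatch by ``averaging,'' is itself a nontrivial step that Berndtsson establishes rather than assumes.
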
 
\begin{rem}
By \cite{Berm16}, Proposition 3.3, we may further conclude $f_t^*\phi^t=\phi^0$. 
\end{rem}
For $\mu \in N_\R$ we denote by $\Aut(X, \mu)$ the group of bundle automorphisms preserving $\mu$.  
Set $\Aut(X, \phi)$ for a fiber metric $\phi$ in a similar manner. 

\begin{thm}\label{uniqueness} 
Let $(\omega_0, \eta_0)$ and $(\omega_1, \eta_1)$ be smooth Mabuchi solitons. 
Then there exists some $f \in \Aut^0(X, -K_X)$ such that 
\begin{equation*}
f^*\omega_1 = \omega_0, ~~~f^*\eta_1 =\eta_0. 
\end{equation*} 
If $\eta_0=\eta_1$ we have $f \in \Aut^0(X, \eta_1)$ and  
one can further take $f$ generated by the imaginary part of $\Aut(X, \phi_1)_\C$.  
\end{thm}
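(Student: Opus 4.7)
The plan is to reduce to the case $\eta_0=\eta_1$ and then apply the convexity of $D_\eta$ together with Berndtsson's rigidity result (Proposition \ref{holomorphic vector field}).

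For the reduction I would use the Matsushima-type reductivity theorem: each $\eta_i$ lives in a maximal torus of $\Aut^0(X,\eta_i)\subset\Aut^0(X,-K_X)$, and the intrinsic characterization \eqref{extremal vector} of the extremal vector field makes the two $\eta_i$ conjugate in $\Aut^0(X,-K_X)$. Thus some $g\in\Aut^0(X,-K_X)$ satisfies $g^*\eta_1=\eta_0$, and after replacing $(\omega_1,\eta_1)$ by $(g^*\omega_1,g^*\eta_1)$ I may assume $\eta_0=\eta_1=\eta$. Fixing a maximal compact subgroup $K\subset\Aut^0(X,\eta)$ containing the compact part of the torus generated by $\eta$, reductivity of $\Aut^0(X,\eta)$ and the conjugacy of its maximal compact subgroups let me arrange (after a further pull-back inside $\Aut^0(X,\eta)$) both $\phi_0,\phi_1\in\cH(X,-K_X)^K$.

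Next, connect $\phi_0$ and $\phi_1$ by the unique bounded $K$-invariant psh geodesic $\phi^t$, $t\in[0,1]$. Each smooth Mabuchi soliton solves \eqref{weak Mabuchi soliton}, so by the converse direction of Theorem \ref{minimizer is a weak solution} both endpoints minimize $D_\eta$. The convexity of $D_\eta$ along $\phi^t$ (Theorem \ref{D is convex}) combined with the affineness of $E_\eta$ along psh geodesics (via \eqref{fiber integration formula for E}) forces $L(\phi^t)=D_\eta(\phi^t)+E_\eta(\phi^t)$ to be affine in $t$. Proposition \ref{holomorphic vector field} and the remark after it then produce a holomorphic vector field $v$ on $-K_X$ with $f_t:=\exp(-t\RE v)$ satisfying $f_t^*\phi^t=\phi^0$ and $\IM v$ preserving each $\omega_{\phi^t}$. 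Setting $f:=f_1$ gives $f^*\omega_1=\omega_0$; pulling back the soliton equation $e^{\rho_1}-1=h_{\eta,\omega_1}$ along $f$ and comparing with $e^{\rho_0}-1=h_{\eta,\omega_0}$ yields $h_{f^*\eta,\omega_0}=h_{\eta,\omega_0}$, hence $f^*\eta=\eta$. Since $\IM v$ preserves $\omega_1$, the automorphism $f=\exp(-\RE v)$ is generated by the imaginary direction of $\Aut(X,\phi_1)_\C$, establishing the second statement. Composing back with the original $g$ recovers the first.

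The main obstacle is the preliminary conjugation step: arranging both metrics to be invariant under a single compact $K$ requires the conjugacy of the extremal vector fields (a priori living in different maximal tori of $\Aut^0(X,-K_X)$) \emph{and} the reductivity of $\Aut^0(X,\eta)$, neither of which is formal and both of which rest on the Matsushima-type theorem established earlier in the paper. Once that is in place the variational-plus-Berndtsson mechanism is standard, and the uniqueness of the Hamilton function modulo constants makes $f^*\eta=\eta$ fall out of the soliton equation for free.
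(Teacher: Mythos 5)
Your proposal follows essentially the same route as the paper: reduce to $\eta_0=\eta_1$ by conjugacy of maximal tori and uniqueness of the extremal one-parameter subgroup, connect the potentials by a bounded psh geodesic, use that both endpoints minimize $D_\eta$ so that the convex function $D_\eta(\phi^t)$ is affine, deduce that $L(\phi^t)$ is affine since $E_\eta$ is geodesically affine, and invoke Proposition \ref{holomorphic vector field} to produce $f_t$ with $f_t^*\phi^t=\phi^0$; your verification that $f$ preserves $\eta$ via the pulled-back soliton equation is equivalent to the paper's computation with $w=(f_t)_*v-v$.

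One caution: in your preliminary normalization you invoke the reductivity of $\Aut^0(X,\eta)$ and conjugacy of its maximal compact subgroups to place both potentials in $\cH(X,-K_X)^K$. In this paper that reductivity (Corollary \ref{reductivity}) is \emph{deduced from} Theorem \ref{uniqueness}, so using it here is circular. The step is also unnecessary: a smooth Mabuchi soliton with soliton subgroup $\eta$ is automatically invariant under the compact torus $S$ generated by $\eta$ (the Hamilton function $h_\eta=e^\rho-1$ is real), and $S$-invariance of the two endpoints --- hence of the unique bounded psh geodesic joining them --- is all that is needed to define $E_\eta$ and $D_\eta$ along the geodesic. Dropping the appeal to reductivity removes the circularity and your argument then matches the paper's.
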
 

\begin{proof}
First we consider the case $\eta = \eta_0 = \eta_1$ contained in the torus $T$.  
Take potentials $\phi^0, \phi^1$ of $\omega_0, \omega_1$ 
and bounded geodesic $\phi^t$ ($t \in [0, 1]$). 
Since $\phi^0$, $\phi^1$ are minimizers the convex function $D_\eta(\phi^t)$ should be affine. 
In particular $L(\phi^t)$ is affine. 
We may apply Theorem \ref{holomorphic vector field} so that $f_t^*\phi^t=\phi^0$. 
Observe that $\phi^t=(f_t^{-1})^*\phi^0$ is a weak Mabuchi soliton, since it is a minimizer of $D_\eta$. 
If we take the extremal vector field $v$ generating $\eta$ and set $w:=(f_t)_*v-v$, it follows $L_w\omega_0=0$ and hence $dd^c h_w=0$. 
That is, $f_t$ preserves $\eta$. 

When $\eta_0\neq \eta_1$ noting that the maximal tori are conjugate to each other we may take some $f$ so that $\eta_1=f^*\eta_0$, by the uniqueness of the extremal vector field. 
\end{proof} 

The uniqueness argument is closely related to the reductivity result. 

\begin{cor}\label{reductivity}
If a Fano manifold $X$ admits a Mabuchi soliton $(\omega_\phi, \eta)$ we have 
\begin{equation*}
\Aut^0(X, \eta)=\Aut(X, \phi)_\C. 
\end{equation*}
That is, $\Aut^0(X, \eta)$ is a complexification of the compact Lie group $\Aut(X, \phi)$. 
\end{cor}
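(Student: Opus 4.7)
The plan is to prove the two inclusions $\Aut(X,\phi)_\C\subseteq\Aut^0(X,\eta)$ and $\Aut^0(X,\eta)\subseteq\Aut(X,\phi)_\C$ separately; the nontrivial direction will follow directly from Theorem \ref{uniqueness}.

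For the easy inclusion, I would first observe that any $g\in\Aut(X,\phi)$ preserves $\omega_\phi$ and therefore the Ricci potential $\rho$. Since $(\omega_\phi,\eta)$ is a Mabuchi soliton, $e^\rho-1=h_\eta$ is precisely the Hamilton function of $\eta$ with respect to $\omega_\phi$, so $g^*h_\eta=h_\eta$ and consequently $g$ commutes with the $\G_m$-action $\eta$. This gives $\Aut(X,\phi)\subseteq\Aut(X,\eta)$; since $\Aut^0(X,\eta)$ is a connected complex Lie group containing the compact group $\Aut(X,\phi)$, it must contain the complexification $\Aut(X,\phi)_\C$ as well.

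For the reverse inclusion, I would take an arbitrary $g\in\Aut^0(X,\eta)$ and first verify that $g^*\phi$ is again a smooth Mabuchi soliton with the same $\eta$. The Ricci potential is natural, $\rho_{g^*\phi}=g^*\rho_\phi$, and the fact that $g$ commutes with $\eta$ forces the Hamilton function of $\eta$ with respect to $g^*\omega_\phi$ to equal $g^*h_\eta$, so pulling back the soliton equation $e^\rho-1=h_\eta$ preserves it. Applying Theorem \ref{uniqueness} to the pair $(g^*\phi,\phi)$ (same $\eta$) produces an $f$ generated by the imaginary part of $\Aut(X,\phi)_\C$, in particular $f\in\Aut(X,\phi)_\C$, with $f^*\omega_\phi=g^*\omega_\phi$. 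After composition with a suitable element of the scalar $\G_m$-center of $\Aut(X,-K_X)$, which lies in $\Aut(X,\phi)_\C$ since $\S^1\subset\Aut(X,\phi)$, we may arrange $f^*\phi=g^*\phi$ on the nose. Then $(gf^{-1})^*\phi=\phi$, so $gf^{-1}\in\Aut(X,\phi)$, and hence $g=(gf^{-1})f\in\Aut(X,\phi)_\C$.

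The main technical point, and the only step requiring care, is verifying that $g^*\phi$ remains a Mabuchi soliton with the same Hamilton function of $\eta$. This is a direct computation but it relies on carefully tracking the normalization of $h_\eta$ under pullback and using that $g$ literally commutes with $\eta$ as bundle automorphisms of $-K_X$. Once this is settled and Theorem \ref{uniqueness} is invoked, the final assembly into $g\in\Aut(X,\phi)_\C$ is a one-line group-theoretic identity.
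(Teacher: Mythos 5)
Your proposal is correct and follows essentially the same route as the paper: the easy inclusion $\Aut(X,\phi)\subset\Aut^0(X,\eta)$ from the Hamilton function identity $e^\rho-1=h_\eta$, then the reverse inclusion by noting $g^*\phi$ is again a Mabuchi soliton, invoking Theorem \ref{uniqueness} to produce $f\in\Aut(X,\phi)_\C$ with $f^*\phi=g^*\phi$, and concluding via $g=(gf^{-1})\circ f$. Your extra care in normalizing $f^*\phi=g^*\phi$ by a scalar in the $\G_m$-center (which the paper handles via the remark after Proposition \ref{holomorphic vector field}) is a welcome but not essentially different refinement.
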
 

\begin{proof}
From $h_\eta =1-e^\rho$ we know $\Aut(X, \phi) \subset \Aut^0(X, \eta)$. 
If we take $g \in \Aut^0(X, \eta)$, $g^*\phi$ is Mabuchi soliton hence some $f \in \Aut(X, \phi)_\C$ satisfies $f^*\phi=g^*\phi $. It follows $g =  (g \circ f^{-1}) \circ f \in \Aut^0(X, \phi)_\C$.  
\end{proof} 

Mabuchi first showed Theorem \ref{uniqueness} using the inverse-continuity method of \cite{BM85}. 
In \cite{Mab03} Corollary \ref{reductivity} is also proved by the twisted Laplacian calculas similarly to \cite{Mat57}. 
As it was shown in \cite{Nak18}, one can also derive Corollary \ref{reductivity} directly from the second variation of the Ricci-Calabi functional. 
The present proofs are based on the idea of \cite{Bern11} for the K\"ahler-Einstein metric. 
A virtue of this idea more directly links reductivity to the uniqueness. 

\subsection{Thermodynamical formalism and modified K-energy}\label{thermodynamical formalism and modified K-energy}

In this final part of the section, following the thermodynamical formalism of \cite{Berm13} and its modified version in \cite{BWN14}, we introduce the modified K-energy in terms of D-energy. 

Recall for two probability measures $\mu, \nu$ the relative entropy is defined to be 
\begin{equation}
H(\mu \vert \nu) = \int_X \log \bigg[\frac{d\mu}{d\nu}\bigg] d\mu. 
\end{equation}
Its relation with D-energy is based on the Legendre transformation formula:  
\begin{equation}\label{L transforms into H}
H(\mu \vert \mu_0) 
= \sup_{f \in C^0(X; \R)} \bigg[ \int_X f d\mu  -\log \int_X e^f d\mu_0 \bigg]. 
\end{equation}

\begin{dfn}\label{modified K-energy}
Fix a reference $\phi_0 \in \cH (X, -K_X)^K$ and $\mu_0:=\mu_{\phi_0}$.  
Let $g \colon P \to \R$ be a positive continuous function on the moment polytope. 
For $\mu$ with finite $E^*(\mu)$ we define the free energy 
\begin{equation}
F(\mu) := 
H(\mu \vert \mu_0) - E^*(\mu). 
\end{equation} 
For $\phi \in \cE^1(X, -K_X)^K$ we define the K-energy as $M(\phi):=F(\MA(\phi))$ and the modified K-energy as 
\begin{equation}\label{Chen type formula}
M_g(\phi) :=  H(\MA(\phi) \vert \mu_0) -E_g(\phi)+ \int_X (\phi-\phi_0) \MA(\phi). 
\end{equation} 
\end{dfn} 

In \cite{BWN14} the K\"ahler-Ricci soliton case (\ref{choice of g for KRS}) was discussed. In this case $M_g$ is equivalent to the energy introduced by \cite{TZ02} for smooth metrics. The treatment is valid for arbitrary $g$ including Mabuchi soliton case. 
From the definition it follows 
\begin{align*}
F(\mu) 
&= H(\mu \vert \mu_0) -E^*(\mu)  \\ 
&= \sup_{f}  \bigg[ \int_X f d\mu  -\log \int_X e^f d\mu_0 \bigg] 
- \sup_\phi \bigg[ E(\phi) -\int_X (\phi -\phi_0) d\mu \bigg] 
\end{align*}
and the second supremum is attained by the weak solution of $\MA(\phi) =\mu$. 
Therefore we obtain $M_{1}=M$, which is analogues to the Chen-Tian formula (\cite{Chen00b}). 
In particular the $g=1$ case gives the original definition of K-energy. 

\begin{rem}\label{about M_g}
\begin{itemize}
\item[$(1)$]
It seems also natural to consider the functional $F(\MA_g(\phi))$ but we adopt the above $M_g$. 
This is mainly because the convexity property of $F(\MA_g(\phi))$ is unclear. 
\item[$(2)$]
There are already several functionals called modified K-energy in the literatures, which are defined mainly to characterize the extremal K\"ahler metric. 
For example the functional 
$M':= M -E_\eta$
gives one such candidate. 
An extremal metric might not be a Mabuchi soliton unless it is K\"ahler-Einstein. 
Since $M \geq D $ clearly implies $M' \geq D_\eta $, if $D_\eta$ is coercive so does $M'$. 
It follows that if $X$ admits a Mabuchi soliton it also has an extremal K\"ahler metric. The converse does not holds. For example, $X=\P(\cO_{\P^2}\oplus \cO_{\P^2}(2))$ admits an extremal K\"ahler metric but not Mabuchi soliton. See \cite{NSY17} for the detail. 
\end{itemize}
\end{rem} 

First of all, we have the following convexity property of $M_g$. 

\begin{thm}[A slight generalization of \cite{BB17}, \cite{BDL15}]\label{convexity of M_g}
Assume $g>0$. 
The modified K-energy $M_g$ is convex along any psh geodesic in $\cE^1(X, -K_X)^K$. 
\end{thm}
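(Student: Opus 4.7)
My plan is to reduce Theorem \ref{convexity of M_g} to the now classical convexity of the unmodified K-energy $M$ along psh geodesics established in \cite{BB17}, \cite{BDL15}, by exhibiting $M_g - M$ as an affine function along such geodesics.

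First I would unpack the Chen--Tian type identity hidden in Definition \ref{modified K-energy}. Starting from the thermodynamical identity $M(\phi) = F(\MA(\phi)) = H(\MA(\phi)\vert \mu_0) - E^*(\MA(\phi))$ and using that the supremum defining $E^*(\MA(\phi))$ is attained at $\phi$ itself, so that $E^*(\MA(\phi)) = E(\phi) - \int_X (\phi-\phi_0)\MA(\phi)$, one obtains
\begin{equation*}
M(\phi) = H(\MA(\phi)\vert \mu_0) - E(\phi) + \int_X (\phi-\phi_0)\MA(\phi).
\end{equation*}
Subtracting this from \eqref{Chen type formula} yields the clean decomposition
\begin{equation*}
M_g(\phi) - M(\phi) = E(\phi) - E_g(\phi).
\end{equation*}

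Next I would observe that both $E$ and $E_g$ are affine along any psh geodesic $\phi^t$ in $\cE^1(X, -K_X)^K$. Affineness of $E$ is part of Theorem \ref{completeness}, while affineness of $E_g$ follows from the fiber integration formula \eqref{fiber integration formula for E}: the psh geodesic equation $(dd^c_{x,\tau}\Phi)^{n+1} = 0$ forces $dd^c_\tau E_g(\phi^t) = 0$, independently of the sign of $g$. Consequently $E - E_g$, and hence $M_g - M$, is affine along $\phi^t$, so convexity of $M_g$ is equivalent to convexity of $M$ along $\phi^t$.

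Finally I would invoke \cite{BB17}, extended to the finite energy class in \cite{BDL15}, which assert that the classical K-energy $M$ is convex (and lower semicontinuous) along psh geodesics in $\cE^1(X, -K_X)$. Since any psh geodesic lying in the $K$-invariant subspace $\cE^1(X, -K_X)^K$ is a fortiori a psh geodesic in the ambient space, the quoted convexity applies directly, and combining with the affineness of $M_g - M$ yields the theorem. The real conceptual obstacle is the convexity of $M$ itself, which is highly nontrivial but is already available off-the-shelf from the cited works; everything else here is a bookkeeping reduction. The hypothesis $g > 0$ enters only to guarantee that $E_g$ is finite and well-behaved on $\cE^1(X, -K_X)^K$, via \eqref{E_g vs E}, so that the decomposition above is meaningful on the full domain of $M_g$.
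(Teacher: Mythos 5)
Your proof is correct, and it takes a slightly different (and arguably cleaner) logical route than the paper. The paper's own proof does not reduce $M_g$ to $M$; it simply asserts that, since $E_g$ is geodesically affine, ``totally the same argument'' as in \cite{BB17}, \cite{BDL15} goes through with $E_g$ in place of $E$ --- i.e.\ it asks the reader to re-run the cited proofs. You instead use the $g=1$ convexity as a black box and observe that, because the supremum defining $E^*(\MA(\phi))$ is attained at $\phi$ itself (the same fact the paper uses to establish $M_1=M$), one has $M_g = M + (E - E_g)$ with $E - E_g$ finite and affine along any psh geodesic in $\cE^1(X,-K_X)^K$ when $g>0$. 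This buys you a genuine reduction: nothing in the internals of \cite{BB17}, \cite{BDL15} needs to be re-examined, whereas the paper's one-line proof implicitly requires verifying that those arguments are insensitive to replacing $E$ by $E_g$. One small point of care: since the entropy term can equal $+\infty$ along the geodesic, you should phrase the decomposition as $M_g = M + (E - E_g)$ (sum of a convex $\R\cup\{+\infty\}$-valued function and a finite affine one) rather than as the difference $M_g - M$, to avoid an $\infty-\infty$ ambiguity; this is purely presentational and does not affect the argument.
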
 

\begin{proof}
The result for $g=1$ was first proved by \cite{BB17}, assuming that the psh geodesic has bounded Laplacian. 
It was extended to arbitrary psh geodesic in $\cE^1$ by \cite{BDL15}. 
As we have already observed that $E_g$ is geodesically affine, totally the same argument works for general choice of $g>0$. 
\end{proof} 

The present choice of $M_g$ shares the same minimizer with $D$. 

\begin{thm}[\cite{BWN14}, Proposition $3.2$]
We have $M_g \geq D_g$ on $\cE^1(X, -K_X)^K$ and a metric $\phi$ attains the equality iff it is a weak Mabuchi soliton.  
The modified K-energy $M_g$ is lower bounded iff $D_g$ is. 
In this case the infimums of the both functionals coincide.  
\end{thm}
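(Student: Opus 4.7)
\emph{Plan.} The plan is to follow the Chen-Tian style of argument relating the two energies through the Legendre duality (\ref{L transforms into H}) between relative entropy and the log-Laplace transform. I would first subtract the definitions: because the $E_g$ contributions cancel, one is reduced to
\begin{equation*}
M_g(\phi) - D_g(\phi) = H(\MA_g(\phi) \vert \mu_0) + \int_X(\phi-\phi_0)\,\MA_g(\phi) - L(\phi).
\end{equation*}
I would normalize the reference so that $\int_X e^{-\phi_0} = V$, i.e., $L(\phi_0) = 0$. For $\phi \in \cE^1(X,-K_X)^K \subset \cE$, Theorem \ref{DH measure} combined with the built-in normalization $\int_P g\,\DH_T = 1$ of the weight (\ref{choice of g}) ensures $\MA_g(\phi)$ is a probability measure, so (\ref{L transforms into H}) is applicable to $\mu = \MA_g(\phi)$ with reference $\mu_0$.

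Taking test function $f = \phi_0 - \phi$ in (\ref{L transforms into H}) and computing $\int e^{\phi_0-\phi}\,d\mu_0 = (\int e^{-\phi})/(\int e^{-\phi_0})$, the Legendre inequality rearranges into $H(\MA_g(\phi)\vert\mu_0) + \int_X(\phi-\phi_0)\MA_g(\phi) \geq L(\phi)$, which is exactly $M_g(\phi) \geq D_g(\phi)$. The equality case in (\ref{L transforms into H}) forces $f$ to agree with $\log(d\MA_g(\phi)/d\mu_0)$ up to an additive constant; exponentiating and using $\int\MA_g(\phi) = 1$, this degenerates precisely to (\ref{weak Mabuchi soliton}), $\MA_g(\phi) = \mu_\phi$.

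For the lower-boundedness and infimum statements, $\inf M_g \geq \inf D_g$ is immediate from the pointwise inequality. For the reverse direction, given $\phi \in \cE^1(X,-K_X)^K$, finite Monge-Amp\`ere energy gives zero Lelong number, so by Skoda's theorem (as already used in Theorem \ref{regularity of Mabuchi soliton}) $\mu_\phi$ has $L^p$-density and hence $E_g^*(\mu_\phi) < \infty$. The solvability statement from \cite{BWN14} recorded around (\ref{E_g^*}) then produces $\tilde\phi \in \cE^1(X,-K_X)^K$ with $\MA_g(\tilde\phi) = \mu_\phi$ attaining $E_g^*(\mu_\phi)$, so $E_g(\tilde\phi) - \int(\tilde\phi-\phi_0)\mu_\phi \geq E_g(\phi) - \int(\phi-\phi_0)\mu_\phi$. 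Combining with the elementary identity $H(\mu_\phi\vert\mu_0) + \int(\phi-\phi_0)\mu_\phi = L(\phi)$ (obtained by direct computation from $\mu_\phi = e^{-\phi}/\int e^{-\phi}$ and the normalization $L(\phi_0)=0$), one obtains
\begin{equation*}
M_g(\tilde\phi) = H(\mu_\phi\vert\mu_0) - E_g^*(\mu_\phi) \leq L(\phi) - E_g(\phi) = D_g(\phi),
\end{equation*}
so $\inf M_g \leq D_g(\phi)$; taking infima yields $\inf M_g = \inf D_g$, from which the equivalence of lower boundedness follows.

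The main technical obstacle will be that $f = \phi_0 - \phi$ is unbounded for general $\phi \in \cE^1$, whereas (\ref{L transforms into H}) is naturally formulated for continuous (or at least bounded) test functions. The fix will be to approximate via the canonical truncations $\phi^{(j)} = \max(\phi, \phi_0 - j)$, apply the inequality at each level where the test function is bounded, and pass to the limit using the monotone continuity and $L^1$-upper-semicontinuity of $E_g$ from Theorem \ref{property of E_g}, the lower-semicontinuity of relative entropy under weak convergence of measures, and monotone convergence for $\int e^{-\phi^{(j)}} \to \int e^{-\phi}$. Carefully preserving the equality case through this approximation, so that the limiting identity $\MA_g(\phi) = \mu_\phi$ still characterizes equality within the finite energy class, is where the full machinery of Section \ref{space of finite energy metrics} is required.
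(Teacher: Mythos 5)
Your proposal is correct and, for the inequality and its equality case, takes essentially the paper's route: apply the Jensen/Legendre inequality (\ref{L transforms into H}) with $f=-(\phi-\phi_0)$ and $\mu=\MA_g(\phi)$, and read off the equality case from $H(\mu\,\vert\,\nu_f)\geq 0$ with $\nu_f=e^f\mu_0/\int_X e^f d\mu_0$, which yields exactly (\ref{weak Mabuchi soliton}). Two remarks. First, your opening identity silently uses $\MA_g(\phi)$ in the entropy and in the integral term, whereas (\ref{Chen type formula}) as printed has $\MA(\phi)$; your reading is the one under which the stated equality characterization holds (with the printed formula the extremals would instead satisfy $\MA(\phi)=\mu_\phi$), and it is also the reading the paper's own proof tacitly uses, so you have in effect proved the intended, corrected statement --- but be aware of the discrepancy with the displayed definition. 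Second, for the coincidence of the infima you argue in the direction dual to the paper's: the paper starts from a finite-entropy measure $\mu$, solves the Monge--Amp\`ere equation for it (via the properness result of \cite{BBEGZ16}) and then invokes the inversion formula for $L$, whereas you start from an arbitrary $\phi$, solve $\MA_g(\tilde\phi)=\mu_\phi$ using the solvability criterion around (\ref{E_g^*}), and exhibit $M_g(\tilde\phi)\leq D_g(\phi)$ directly. The two are equivalent and yours is arguably cleaner; the only step you should expand is the claim that $L^p$-density of $\mu_\phi$ gives $E_g^*(\mu_\phi)<\infty$, which requires comparing $E_g^*$ with $E^*$ via (\ref{E_g vs E}) for $g$ bounded above and below away from zero. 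Finally, the truncation scheme you anticipate for unbounded $f$ is unnecessary: the inequality and its equality case hold verbatim for the lower-semicontinuous $f=-(\phi-\phi_0)$, since they reduce to $H(\mu\,\vert\,\nu_f)\geq 0$ and only require $\int_X e^f d\mu_0<\infty$, which is guaranteed by the finiteness of $L(\phi)$.
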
 

\begin{proof}
For the reader's convenience we give the proof. 
The one-side inequality in (\ref{L transforms into H}) is a simple consequence of Jensen's inequality and actually holds for lower-semicontinuous function of the form $f=-(\phi -\phi_0)$. 
It immediately shows $M_g \geq D_g$. 

On the other hand, the supremum is attained by the solution $f$ of 
\begin{equation*}
\frac{e^f d\mu_0}{\int_X e^f d\mu_0} = d\mu.  
\end{equation*}
Consequently, $M_g(\phi) = D_g(\phi)$ iff $\phi$ is a weak Mabuchi soliton. 

It remains to show $\inf_\phi M_g =\inf_\phi D_g \in \R \cup \{ -\infty\}$. 
Set $m:= \inf_\phi M_g$. 
By the properness result \cite{BBEGZ16}, Theorem $2.18$, we have 
\begin{equation}
H(\mu \vert \mu_0) \geq \a E^*(\mu) -C 
\end{equation}
for any $\a $ smaller than Tian's $\a$-invariant. 
In particular, $H(\mu \vert \mu_0) < \infty$ implies that $\mu$ has finite energy so that some $\phi\in \cE^1(X, -K_X)^K$ solves $\MA(\phi)=\mu$. 
Substitution to (\ref{Chen type formula}) yields 
\begin{equation*}
H(\mu \vert \mu_0) \geq m+ E_g(\phi) -\int_X (\phi-\phi_0)\MA(\phi). 
\end{equation*}  
Since the infimum of the inversion formula 
\begin{equation*}
L(\phi) = \inf_\mu \bigg[ H(\mu \vert \mu_0) + \int_X (\phi- \phi_0) d\mu \bigg] 
\end{equation*}
is attained by $\mu= \mu_\phi$, it follows  
$D_g \geq	 m$. 
\end{proof} 

Totally in the same manner we observe that 
the coercivity $M_g(\phi) \geq \e \inf_{\s \in T}J_g(\s^*\phi) -C$ holds on $\cE^1(X, -K_X)^K$ iff $D_g\geq \e \inf_{\s \in T} J_g(\s^*\phi) -C$ on $\cE^1(X, -K_X)^K$.


\section{Relative uniform D-stability} 

Bearing in mind of the last section, we introduce the algebraic (non-Archimedean) counterpart of  modified energies and define the appropriate notion of stability which should characterize the existence of Mabuchi soliton. 
We first recall the notion of D-stability introduced by \cite{Berm16}. 
The terminology here is due to our previous work \cite{BHJ15}, \cite{BHJ19}. 

\subsection{Uniform D-stability}\label{uniform D-stability}

We first require that a test configuration $\pi\colon (\cX, \cL)\to \A^1$ of a polarized manifold $(X, L)$ is a family of polarized schemes defined over the affine line $\A^1$. 
In fact we may and should allow $\cL$ to be only relatively semiample and $\Q$-Cartier divisor. 
Further, $\cX$ is normal variety and endowed with a lifted $\G_m$-action $\lambda\colon \G_m \to \Aut(\cX, \cL)$ such that the projection $\pi$ is equivariant for $\lambda$ and the standard $\G_m$-action to $\A^1$. 
The datum includes the isomorphism 
\begin{equation*}
 \pi^{-1}(\A^1\setminus \{0\}) \simeq X\times (\A^1 \setminus \{0 \}) 
\end{equation*}
which sends the line bundle $\cL$ equivariantly to $L_{\A^1}=p_1^*L$. 
Although we are concerned with the case $L=-K_X$, $\cL$ is still not equivalent to $-K_{\cX/\P^1}$. 
Since we assumed $\cX$ is normal, $K_{\cX}$ is at least well-defined as a Weil divisor, however, it is even not the line bundle in general. 
Note that some literatures consider the family over the projective line $\P^1$. 
This is equivalent to our setting because one can always obtain the unique compactified family $(\bar{\cX}, \bar{\cL}) \to \P^1$ which is trivial around $\infty \in \P^1$. 

\begin{exam}
Every one-parameter subgroup $\mu \in N$ defines a product family $X_{\A^1}=X \times \A^1$ endowed with the non-trivial action: $\lambda(\s)(x, \tau):= (\mu(\s)x, \s\tau)$ for $\s \in \G_m$. 
We call it a product configuration generated by $\mu$. 
Therefore, test configuration can be seen as a far generalization of one-parameter subgroup. 
Note that the compactified family is no longer a product space. 
For example, the product space $\P^1\times \A^1$ endowed with a non-trivial $\G_m$-action is compactified to the Hirzebruch surface $\P(\cO_{\P^1}\oplus \cO_{\P^1}(d))$. 
\end{exam} 

After the compactification $(\bar{\cX}, \bar{\cL})$ we may take the intersection number \eg $\bar{\cL}^{n+1}$. 
Since we assumed $\cX$ is normal, $K_{\cX}$ is at least well-defined as a Weil divisor and $K_{\bar{\cX}}\bar{\cL}^n$ essentially gives the famous Donaldson-Futaki invariant, or equivalently, non-Archimedean K-energy $M^\NA(\cX, \cL)$ introduced in \cite{BHJ15}. 
This is a natural generalization of (\ref{L-Futaki invariant}) to arbitrary test configurations. 

Let us define the non-Archimedean D-energy. 
Recall that for given divisors $\cB, \cD$ the log-canonical threshold 
$\lct_{(\bar{\cX}, \cB)}(\cD)$ is defined to be the supremum of $c \in \R$ such that  
the log pair $(\bar{\cX}, \cB+c\cD)$ has at worst log canonical singularities. 
Choosing the boundary divisor $\cB$ linearly equivalent to $-K_{\bar{\cX}/\P^1}-\bar{\cL}$, the quantity reflects the positivity of the canonical divisor.  
Notice that in this choice the log-canonical divisor $K_{\bar{\cX}} + \cB \sim_\Q -\bar{\cL}+\pi^*K_{\P^1}$ is $\Q$-Cartier so that the log discrepancies and log canonical singularities are well defined for any $(\cX, \cL)$. 

\begin{dfn}\label{definition of D^NA}
For a test configuration $\pi \colon (\cX, \cL) \to \A^1$ we define 
\begin{equation*}
L^\NA(\cX, \cL) := 
\lct_{(\bar{\cX}, \cB)} (\cX_0) -1, ~~~
E^\NA(\cX, \cL) := 
\frac{\bar{\cL}^{n+1}}{(n+1)V},  
\end{equation*}
where $\cX_0$ is the scheme theoretic central fiber and the boundary divisor is chosen $\cB \sim_\Q -K_{\bar{\cX}/\P^1}-\bar{\cL}$. 
We say a Fano manifold $X$ is D-semistable if the non-Archimedean D-energy 
\begin{equation*}
D^\NA(\cX, \cL):= L^\NA(\cX, \cL) -E^\NA(\cX, \cL)
\end{equation*}
is semipositive for all test configurations. 
\end{dfn} 

If $(\cX, \cL)$ is the product test configuration generated by $\mu \in N$, we write $D^\NA(\cX, \cL)$ as $D^\NA(\mu)$. 
It is known to be equivalent to the Futaki character (\ref{Futaki invariant}), \ie   
\begin{equation}
D^\NA(\mu) =F(\mu)  
\end{equation} 
holds for every $\mu \in N$. 
One can also define D-stability and D-polystability of a Fano manifold. 
See \cite{Berm16}, \cite{BHJ15}, and \cite{Fuj16} for the detail treatment. 
The uniform version is more important for us. 
We say that a test configuration $(\cX', \cL')$ is a pull-back of $(\cX, \cL)$ if a birational equivariant morphism $f\colon \cX' \to \cX$ yields $\cL'=f^*\cL$. 
Two test configurations are called {\em equivalent} if they admit a common pull-back. 
It is easy to see that the above invariants have the same values for the equivalent test configurations. 
More substantially, by \cite{BHJ15}, any test configurations can be seen as a fiber metric of the Berkovich analytification $(\cX^\NA, \cL^\NA)$, evaluating each valuation on the central fiber. 
The equivalence of test configurations are precisely of corresponding non-Archimedean fiber metrics. 
The above $L^\NA$ and $E^\NA$ are actually functionals defined on these non-Archimedean fiber metrics. 
From this reason, taking a pull-back we may assume a domination $\rho \colon  (\cX, \cL) \to X_{\A^1}$ to the product family endowed with a possibly non-trivial action. 
By the projection formula the following definition is actually independent of $\rho$. 

\begin{dfn}\label{definition of J^NA}
We define 
\begin{equation*} 
L_0^\NA (\cX, \cL) := V^{-1}(\rho^*L_{\A^1})\bar{\cL}^n. 
\end{equation*}
and the non-Archimedean counterpart of Aubin's J-functional as 
$J^\NA(\cX, \cL):= L_0^\NA (\cX, \cL)- E^\NA(\cX, \cL)$. 
A Fano manifold is called uniformly D-stable if there exists a constant $\e>0$ such that 
\begin{equation*}
D^\NA(\cX, \cL) \geq \e J^\NA(\cX, \cL) 
\end{equation*}
holds for all test configurations. 
\end{dfn} 

Let us illustrate a key relation between the functionals $E, J, D$ and their non-Archimedean version. 
It explains that test configuration gives the algebraic formulation of the geodesic ray on $\cH$. 
In the sequel we denote the fiber of $\tau \in \A^1$ by $\cX_\tau$ and the restricted line bundle by $\cL_{\tau}$. 
As well, for the unit disk $\Delta$ we set $\cX_{\Delta}:=\pi^{-1}(\Delta)$ and $\cL_\Delta:= \cL\vert_{\cX_{\Delta}}$. 
For the punctured disk $\Delta^*=\Delta \setminus \{0\}$ we have the isomorphism $\cX_{\Delta^*} \simeq X \times \Delta^* $ so that identify a point of $\cX_{\Delta^*}$ with $(x, \tau)$. 
Let $\Phi$ be a smooth fiber metric of $\cL_\Delta$, having the semipositive curvarture. 
It defines the ray 
\begin{equation}\label{compatibility of ray}
\phi^t(x) = \Phi(\lambda(e^{-t})(x, 1))
\end{equation}
so that $\phi^t$ for each $t \in [0, \infty)$ defines a fiber metric of $L$, having the semipositive curvature. 
This type of ray is said to be compatible with the test configuration. 
Any two metrics defines the same asymptotic because the difference of the associated rays is bounded uniformly in $t$. 
The following type of results is predicted in the origination of K-stability and proved for arbitrary test configurations in \cite{Berm16}, \cite{BHJ19}. 

\begin{thm}\label{slope of D}
Let $F\colon \cH \to \R$ be a functional either $E, J,$ or $D$. 
For a test configuration and a ray $\phi^t$ compatible with $(\cX, \cL)$ we have 
\begin{equation*}
F^\NA(\cX, \cL) = \lim_{t \to \infty} \frac{F(\phi^t)}{t}. 
\end{equation*} 
\end{thm}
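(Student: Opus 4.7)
The plan is to treat the three elementary functionals $E$, $L_0$, and $L$ separately, since $J = L_0 - E$ and $D = L - E$, and the claimed identity is linear in $F$. Two of them are essentially intersection-theoretic, while the slope of $L$ is the genuinely analytic heart of the statement and will be the main obstacle.

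First I would handle the Monge--Amp\`ere energy $E$. After pulling back we may assume the smooth psh fiber metric $\Phi$ on $\cL_\Delta$ extends to a positively curved metric on the compactification $(\bar{\cX}, \bar{\cL})$. Applying the fiber integration formula \eqref{Hessian of E} to the function $\tau \mapsto E(\phi^{-\log|\tau|})$ shows that its $dd^c$ is a positive $(1,1)$-current on $\A^1$ whose total mass on the punctured unit disk equals $((n+1)V)^{-1}\int_{\bar{\cX}}(dd^c\Phi)^{n+1} = \bar{\cL}^{n+1}/((n+1)V)$ by Fubini on $\bar{\cX}$. A convexity/Ces\`aro argument then gives
\begin{equation*}
\lim_{t\to\infty} \frac{E(\phi^t)}{t} = E^{\NA}(\cX,\cL).
\end{equation*}
Independence of the chosen $\Phi$ (two compatible rays differ by a bounded function, so $E$ along them differs by $O(1)$) takes care of the general compatible ray.

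Next for $L_0(\phi^t) = \sup_X(\phi^t-\phi_0)$: the same reduction and the fact that $\sup_X(\Phi_\tau - \phi_0)$ grows like $-a\log|\tau|$ with $a$ equal to the generic Lelong number of $dd^c\Phi - dd^c(\rho^*\phi_0)$ along the central fiber $\cX_0$ identifies the slope. Using the projection formula along the domination $\rho\colon(\cX,\cL)\to X_{\A^1}$ and the fact that $\cX_0$ is a principal divisor, this generic Lelong number is exactly $V^{-1}(\rho^* L_{\A^1})\cdot\bar{\cL}^n = L_0^{\NA}(\cX,\cL)$. Subtracting the $E$-computation gives the $J$-case.

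The decisive step is the identity $\lim_{t\to\infty} L(\phi^t)/t = \lct_{(\bar{\cX}, \cB)}(\cX_0) - 1$ for the log term. I would transplant the integral $\int_X e^{-\phi^t}\omega_0^n$ to $\cX_\tau$ via the equivariant trivialization, recognizing $e^{-\Phi}$ together with the fiber measure as a singular volume form adapted to the log pair $(\bar{\cX}, \cB)$. The question then becomes the asymptotic order of vanishing of $\int_{\cX_\tau} e^{-\Phi}$ as $\tau\to 0$. Taking a log resolution of $(\bar{\cX}, \cB+\cX_0)$ with snc exceptional divisor, the slope is computed from the discrepancies of the log pair along the components of the central fiber; an application of Demailly's regularization of $\Phi$ by algebraic sections and an Ohsawa--Takegoshi-type extension reduces the comparison of $\int_X e^{-\phi^t}$ with the log discrepancy to a combinatorial computation on the dual complex, matching the definition of $\lct_{(\bar{\cX}, \cB)}(\cX_0)-1$. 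The main difficulty is controlling the analytic asymptotic on a variety that is only normal (not smooth): one must prove matching upper and lower slope bounds, which requires interchanging the resolution step with the Demailly approximation and ensuring uniformity across the approximating sequence. This is precisely where I would rely on the arguments of \cite{Berm16} and the non-Archimedean formalism developed in \cite{BHJ19}.

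Combining the three slopes yields the result for $E$, $J$, and $D$ simultaneously.
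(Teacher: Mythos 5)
The paper itself gives no proof of Theorem \ref{slope of D}: it is quoted from \cite{Berm16} and \cite{BHJ19}, and your sketch follows exactly the route of those references --- decompose into the elementary functionals $E$, $L_0$, $L$, compute the slope of $E$ by fiber integration over the compactified family, read off the slope of $L_0$ as an order of vanishing along the central fiber, and identify the slope of $L$ with a log canonical threshold on a log resolution. Three points in your write-up are imprecise as stated, though none is fatal. First, for $E$ the slope at $t=\infty$ is \emph{not} the mass of $dd^c_\tau E$ on the punctured unit disk: one must pass to the compactification over $\P^1$, trivial near $\infty$, so that $E(\phi^{-\log\abs{\tau}})$ is constant for $\abs{\tau}$ large; only then is the slope the total mass of $dd^c_\tau E$ over $\P^1\setminus\{0\}$, which equals $\bar{\cL}^{n+1}/((n+1)V)$ because the smooth form $(dd^c\Phi)^{n+1}$ puts no mass over $\tau=0$. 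Without this normalization at infinity the boundary derivative at $\abs{\tau}=1$ does not cancel and the ``Fubini'' identity you write is false. Second, since $\cX_0$ is in general reducible, ``the generic Lelong number along $\cX_0$'' is not well defined; the correct invariant is $\ord_{E_0}D$ for the single component $E_0$ given by the strict transform of $X\times\{0\}$ (cf.\ Proposition \ref{E in terms of action}), which is the component the supremum over $X\simeq\cX_\tau$ actually detects as $\tau\to 0$. Third, for a ray compatible with a test configuration the metric $\Phi$ is smooth and bounded on $\cX_\Delta$, so no Demailly approximation or Ohsawa--Takegoshi extension is needed for the $L$-slope: the asymptotics of $\int_{\cX_\tau}e^{-\Phi}$ reduce directly to integrals of the form $\int \prod\abs{f_i}^{2a_i}\,dV$ over the fibers of a log resolution of $(\bar{\cX},\cB+\cX_0)$, whose rate of decay in $\abs{\tau}$ is exactly $\lct_{(\bar{\cX},\cB)}(\cX_0)-1$; those approximation tools belong to Theorem \ref{BBJ}, not to this slope formula.
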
 

The above formula is indeed true for non-smooth but bounded $\Phi$ for which the semipositivity $dd^c\Phi \geq0$ holds in the sense of current. 
In particular the same result holds for the associated psh geodesic ray $\phi^t$ which is characterized by the degenerate Monge-Amp\`ere equation 
\begin{equation}\label{associated weak geodesic ray}
(dd^c_{\tau, x}\Phi)^{n+1} =0 
\end{equation} 
on $\cX_\Delta$. 
Given a smooth boundary value $\phi^0$, the bounded solution uniquely exists. 
For example \cite{Berm16} gives the solution in terms of the Peron-Bremermann type envelope. 
After \cite{CTW18} it is known to have the best-possible $C^{1, 1}$-regularity. 
We have already emphasized that the consideration of weak geodesic is necessary for the variational approach. 

Regarding Theorem \ref{slope of D}, as in \cite{DL18} we introduce for psh geodesic ray $\phi^t$ the {\em radial} energy 
\begin{equation}
\hat{F}(\Phi) := \lim_{t \to \infty} \frac{F(\phi^t)}{t}, 
\end{equation} 
which is well-defined by the convexity property. 

As a consequence of Theorem \ref{slope of D}, the coercivity of the D-energy implies that $X$ is uniformly D-stable. 
The heart of \cite{BBJ15} is showing the converse direction. 
It is known that the uniform stability implies that the automorphism group is finite (see \cite{BHJ19} for the analytic discussion and the purely algebraic proof \cite{BX18}).  
What we are going to discuss suggests the one of the treatment for general automorphism groups. 

\subsection{Associated concave function and Duistermaat-Heckman measure}\label{associated concave functiob and Duistermaat-Heckman measure}

We continuously fix an extremal one-parameter subgroup $\eta$ and a torus $T \subset \Aut^0(X, \eta)$. As in the previous subsections we denote the lattice of one-parameter subgroups by $N$ and the dual by $M$. 
Let $P \subset M_\R$ be the moment polytope of the maximal torus and 
\begin{equation*}
m_\phi\colon X \to P 
\end{equation*} 
the moment map. 
Recall that the Duistermaat-Heckman measure is the push-forward 
\begin{equation}
\DH_T:= (m_\phi)_*(V^{-1}\omega^n) 
\end{equation} 
which is also independent of the metric. 
Let us first give an algebraic definition of $\DH_T$. 
Any $\mu \in N_\R$ is identified with the affine function $G_\mu(x):=\langle \mu, x \rangle$ on $M_\R$ hence we may integrate by $\DH_T$.  
Let $k \in \N$ and $\mu_1, \dots, \mu_{N_k}$ be the weight of the $\G_m$-action on $H^0(X, kL)$, induced by $\mu$. 
For any $p \geq 1$, the equivariant Riemann-Roch formula implies 
\begin{equation}
\int_P G_\mu^p(x)  \DH_T 
= \lim_{k\to \infty} \frac{1}{k^pN_k}\sum \mu_i^p. 
\end{equation} 
If set $\DH_\mu := (h_\mu)_*(V^{-1}\omega^n)= (G_\mu)_*\DH_T $, by the Hausdorff moment theorem we obtain 
the convergence of the measures on $\R$: 
\begin{equation}\label{weight description of DH_T}
\DH_\mu = \lim_{k \to \infty} \frac{1}{N_k}\sum \delta_{\frac{\mu_i}{k}}. 
\end{equation}
A simple argument checks that $P$ is the closed convex hull of the set 
\begin{equation}
\bigg\{ \frac{\chi}{k}\in M_\Q: \chi \in M, s_\chi \in H^0(X, kL) ~\text{with}~ \s \cdot s_\chi = \chi(\s) s_\chi \bigg\}  
\end{equation}
and that 
\begin{equation}
\DH_T = \lim_{k \to \infty} \frac{1}{N_k}\sum \d_{\frac{\chi}{k}}, 
\end{equation}
where $\chi \in M$ runs for all $s_\chi \in H^0(X, kL)$. 
As a consequence 
\begin{prop}\label{m_X formula}
For every $\mu \in N_\Q$ we have 
\begin{equation*}
m_X := \inf_P G_{1+\mu} = \lim_{k \to \infty} \min \frac{k+\mu_i}{k}  
= \inf_X (1+h_\mu). 
\end{equation*}
In particular $\inf_X (1+h_\mu)$ is independent of the metric. 
\end{prop}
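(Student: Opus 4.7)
The plan is to establish the three expressions coincide through two independent comparisons and then to deduce metric-independence from the weight description.

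For the equality $\inf_P G_{1+\mu} = \inf_X(1+h_\mu)$, I would invoke the classical moment-map description recalled in the previous subsection: the image $m_\phi(X)$ equals the full polytope $P$, and by definition $h_\mu(x) = \langle \mu, m_\phi(x)\rangle = G_\mu(m_\phi(x))$. So $\inf_X h_\mu = \inf_P G_\mu$, whence adding $1$ gives the identity. This step is essentially by inspection once the moment polytope is in place.

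For the equality with $\lim_{k} \min_i (k+\mu_i)/k$ I would show matching upper and lower bounds. The lower bound is immediate from the convex-hull description of $P$ recalled just before the proposition: every $T$-weight $\chi_i$ of $H^0(X, kL)$ satisfies $\chi_i/k \in P$, so $\mu_i/k = G_\mu(\chi_i/k) \geq \inf_P G_\mu$, hence $\min_i(k+\mu_i)/k \geq \inf_P G_{1+\mu}$ for every $k$. For the matching upper bound, I would use that $G_\mu$ is affine and $P$ a rational polytope, so $\inf_P G_\mu$ is attained at a rational vertex $v \in P$; choose $k_0 \in \N$ and a nonzero section $s \in H^0(X, k_0 L)$ of $T$-weight $k_0 v$. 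Along the subsequence $k = q k_0$ the power $s^q$ realizes the weight $k v$ exactly, giving $\min_i \mu_i/k \leq G_\mu(v) = \inf_P G_\mu$. To extend this to all $k$ I would pick auxiliary sections $t_r \in H^0(X, r L)$ for each residue $0 \leq r < k_0$ (available for all sufficiently large residues by ampleness of $L$, and for the remaining small residues after enlarging $k_0$ to an appropriate multiple); then for $k = q k_0 + r$ the product $s^q t_r \in H^0(X, kL)$ carries a $T$-weight $\chi_i$ with $\chi_i/k \to v$ as $k \to \infty$, so $\mu_i/k \to G_\mu(v)$.

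Metric-independence of $\inf_X(1+h_\mu)$ then drops out for free, since the middle expression manifestly depends only on the $T$-representation on $H^0(X, kL)$. The main technical obstacle is extending the upper bound to the full sequence $k$ rather than merely to multiples of $k_0$; the residue-class construction above handles it, and as an alternative one can appeal to the weak convergence \eqref{weight description of DH_T} of the normalized weight measure to $\DH_\mu$ together with the fact that $\inf_P G_\mu$ lies in $\supp \DH_\mu$, so that every open neighborhood of it contains weights for $k$ sufficiently large.
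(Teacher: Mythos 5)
Your proposal is correct and follows essentially the same route as the paper, which states the proposition as a direct consequence of the two facts displayed just before it: the description of $P$ as the closed convex hull of the normalized weights $\chi/k$ and the weak convergence of the normalized weight measures to $\DH_T$ (equivalently $\DH_\mu$), together with the moment-map identity $h_\mu=G_\mu\circ m_\phi$. You merely fill in the two-sided bound that the paper leaves implicit; of your two arguments for the upper bound, the one via $\supp\DH_\mu$ is the closer match to the paper's stated ingredients and also sidesteps the (true, but not entirely trivial) claim that a minimizing vertex of $P$ is exactly realized by a $T$-weight of $H^0(X,k_0L)$.
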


More generally, a test configuration defines a concave function $G_{(\cX, \cL)}$ on $P$. 
For the purpose it is convenient to describe the test configuration in terms of the filtration. 
\begin{dfn} 
Let $(\cX, \cL)$ be a test configuration. 
Given $s \in H^0(X, L)$, we have a rational section 
\begin{equation*}
\bar{s}(x, \tau) = \lambda(\tau)\cdot s(\lambda(\tau^{-1})(x, \tau))
\end{equation*}
of $\cL$. 
Considering how extent $\bar{s}$ is holomorphic we obtain a filtration of the section ring, which fully recovers the test configuration. 
For each $\lambda \in \R$ we set 
\begin{equation}
F^{\lambda}H^0(X, kL) := 
\{ s \in H^0(X, kL): \tau^{-\lceil \lambda \rceil } \bar{s} \in H^0(\cX, k\cL) \}. 
\end{equation}
\end{dfn} 
We may easily show that the filtration is monotone, left-continuous, and multiplicative in both $k$ and $\lambda$.  
By \cite{BHJ15} lemma 2.14, $\lambda$-weightspace of the induced action to $H^0(\cX_0, k\cL_0)$ is given by 
\begin{equation}\label{filtration to central fiber}
H^0(\cX_0, k\cL_0)_\lambda
\simeq F^{\lambda}H^0(X, kL)/F^{\lambda+1}H^0(X, kL). 
\end{equation}
The identification also follows from Proposition \ref{T-equivariant trivialization} below. 
A non-trivial fact proved in \cite{PS07} is the linearly boundedness. Namely there exists a constant $C>0$ such that 
\begin{equation}\label{linearly bounded}
F^{kt}H^0(X, kL) = \{ 0\} ~~~~~~(\text{{\em resp}. }  H^0(X, kL))
\end{equation} 
for any $t>C$ ({\em resp}. $t <-C$) and $k \geq 1$. 
It is equivalent to say: $\abs{\lambda} \leq Ck$ for the induced $\G_m$-action. 

Imitating \cite{WN12}, we construct a concave function from the filtration. 

\begin{dfn}
For each $t \in \R$ we define $P^t$ as the closed convex hull of the set 
\begin{equation*}
\bigg\{ \frac{\chi}{k}\in M_\Q: \chi \in M, s_\chi \in F^{kt}H^0(X, kL) ~\text{with}~ \s \cdot s_\chi = \chi(\s) s_\chi\bigg\}.  
\end{equation*} 
The associated concave function is then defined as 
\begin{equation*}
G_{(\cX, \cL)}(x) 
:= \sup \{ t \in \R: x \in P^t \}. 
\end{equation*} 
\end{dfn} 
It is easy to check $G_{(\cX, \cL)}=G_\mu$ when $(\cX, \cL)$ is the product configuration generated by $\mu \in N$. Indeed from the definitions we compute 
\begin{align*}
\tau^{-kt}\overline{s_\chi}(x, \tau)
&=\tau^{-kt} \mu(\tau)\cdot s_\chi(\mu(\tau^{-1})x) \\
&=\tau^{-kt}(\mu(\tau)\cdot s_\chi)(x) \\
&= \tau^{-kt+\langle \mu, \chi \rangle}s_\chi (x). 
\end{align*}

In terms of the associated weak geodesic ray (\ref{associated weak geodesic ray}) we may extend (\ref{weight description of DH_T}) to $T$-equivariant test configurations. 
Notice that the weak geodesic ray $\phi^t$ has $C^{1, 1}$-regularity and the right-derivative 
\begin{equation}
\dot{\phi}^0
:= \inf_{t>0} \frac{\phi^t -\phi^0}{t} 
\end{equation}
pointwisely defined is hence a bounded function. 
This fact reflects the linearly boundedness (\ref{linearly bounded}).  

\begin{thm}
Let $(\cX, \cL)$ be a $T$-equivariant test configuration. 
For each $k \in \N$ $\lambda_1, \dots, \lambda_{N_k}$ denote the weights of the induced $\G_m$-action to $H^0(\cX_0, k\cL_0)$. 
The push-forward 
\begin{equation*}
\DH_{(\cX, \cL)} := \dot{\phi}^0_*(V^{-1} \omega^n) 
\end{equation*} 
defines a probability measure on $\R$, which is independent of the metric. 
Moreover, it is equivalent to 
\begin{equation*}
(G_{(\cX, \cL)})_*\DH_T = \lim_{k \to \infty} \frac{1}{N_k}\sum \delta_{\frac{\lambda_i}{k}}. 
\end{equation*}
\end{thm}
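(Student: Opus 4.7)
The plan is to reduce the assertion about $\DH_{(\cX,\cL)}$ to the pointwise identity
\begin{equation*}
\dot{\phi}^0(x) = G_{(\cX,\cL)}(m_{\phi^0}(x)) \quad \text{a.e. on } X,
\end{equation*}
and then to compare $(G_{(\cX,\cL)})_* \DH_T$ with the asymptotic weight distribution on the central fiber via a $(T\times\G_m)$-equivariant Riemann--Roch argument. Granting the pointwise identity, the metric-independence and the first equality are immediate by change of variables:
\begin{equation*}
\DH_{(\cX,\cL)} = \dot{\phi}^0_*(V^{-1}\omega^n) = (G_{(\cX,\cL)})_*(m_{\phi^0})_*(V^{-1}\omega^n) = (G_{(\cX,\cL)})_* \DH_T,
\end{equation*}
using Theorem \ref{DH measure} and the fact that $\DH_T$ does not depend on the choice of $K$-invariant metric.

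The main obstacle, and the first substantive step, is establishing this pointwise identity between $\dot{\phi}^0$ and $G_{(\cX,\cL)}\circ m_{\phi^0}$. I would use a Bergman kernel approximation which is simultaneously compatible with the $T$-action and with the $\G_m$-filtration. More precisely, because $\lambda$ commutes with $T$, each $F^{\lambda}H^0(X,kL)$ decomposes into $T$-weight pieces, and one may choose an $L^2$-orthonormal basis $\{s_{\chi,\lambda}^{(k)}\}$ of $H^0(X,kL)$ simultaneously diagonalizing the two actions, with $\chi\in M$ and $\lambda\in\Z$. Setting
\begin{equation*}
\phi_k^t(x) := \frac{1}{k}\log \sum_{(\chi,\lambda)} \abs{\bar{s}_{\chi,\lambda}^{(k)}(x,e^{-t})}^2,
\end{equation*}
one sees that the extension $\bar{s}^{(k)}_{\chi,\lambda}(x,\tau)$ vanishes to order $\lambda$ in $\tau$, so one can extract a factor $e^{-\lambda t}$ and obtain $\phi^t_k(x) \to \phi^t(x)$ pointwise (this is the standard $L^2$ Bergman approximation of weak geodesic rays, $C^{1,1}$-regularity from \cite{CTW18}). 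Taking the right-derivative at $t=0$ and passing to the $k\to\infty$ limit along sections of large value at $x$, one obtains that $\dot{\phi}^0(x)$ is the supremum over all $t\in\R$ such that $m_{\phi^0}(x)\in P^t$, which is exactly the definition of $G_{(\cX,\cL)}(m_{\phi^0}(x))$. The $T$-equivariance is used crucially to localize the computation to a fixed weight $\chi$ and read off the moment polytope coordinate $\chi/k$ from the value $m_{\phi^0}(x)$.

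For the second equality, asserting that $(G_{(\cX,\cL)})_*\DH_T$ is the limit of the normalized weight counting measures $\frac{1}{N_k}\sum_i \delta_{\lambda_i/k}$, I would use the identification (\ref{filtration to central fiber}) of weight spaces with filtration quotients together with equivariant Riemann--Roch applied on each $T$-weight stratum. Concretely, since $F^{\bullet}$ and $T$-eigenspaces are simultaneously diagonalizable, the joint distribution
\begin{equation*}
\frac{1}{N_k}\sum_{(\chi,\lambda)} (\dim_{\chi,\lambda}) \,\delta_{(\chi/k,\,\lambda/k)}
\end{equation*}
concentrates on the graph of $G_{(\cX,\cL)}$ as $k\to\infty$: projection on the first coordinate yields $\DH_T$ by the Hausdorff moment theorem (as in (\ref{weight description of DH_T})), while the conditional distribution in the second coordinate is a Dirac at $G_{(\cX,\cL)}(\chi/k)$ by the very definition of $G_{(\cX,\cL)}$ as the largest $t$ with $\chi/k\in P^t$. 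Pushing forward by the projection to the second coordinate then gives the desired equality.

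The delicate point throughout is the first step, since one must control the $C^{1,1}$-geodesic ray by a filtration whose jumps are in general rough; linear boundedness (\ref{linearly bounded}) ensures $\dot{\phi}^0$ is bounded and matches the support of the limit measure, but the pointwise identification genuinely uses the $T$-equivariance to reduce a non-Archimedean envelope computation to a weight-by-weight statement on the moment polytope.
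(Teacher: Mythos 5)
Your overall architecture differs from the paper's, and the step on which everything rests --- the pointwise identity $\dot{\phi}^0(x) = G_{(\cX,\cL)}(m_{\phi^0}(x))$ a.e. --- is a genuine gap: it is strictly stronger than the asserted equality of push-forward measures, and it is false for a general $T$-equivariant test configuration. The function $G_{(\cX,\cL)}\circ m_{\phi^0}$ is by construction constant on the fibers of the moment map $m_{\phi^0}\colon X\to M_\R$, which have real codimension only $\dim_\R T$ in $X$, whereas the geodesic tangent $\dot{\phi}^0$ has no reason to be constant on those fibers. Already when $T$ is generated by the identical one-parameter subgroup $1\in N$ (so that $m_{\phi^0}$ is the constant map and $G_{(\cX,\cL)}\circ m_{\phi^0}$ is the constant $L_0^{\NA}(\cX,\cL)$), any non-product test configuration --- a deformation to the normal cone, say --- has non-constant $\dot{\phi}^0$. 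Your Bergman-kernel sketch silently performs a toric localization: it identifies the sections dominating the kernel at $x$ with $T$-eigensections of weight $\chi/k\approx m_{\phi^0}(x)$, which is legitimate only when the $T$-weight decomposition sees the whole filtration (e.g.\ the toric case $\dim T=n$, or product configurations generated by elements of $N$). The same multiplicity issue touches your second step: a weight space $H^0(X,kL)_\chi$ may meet several jumps of $F^\bullet$, so the conditional law over $\chi/k$ need not be a Dirac mass at $G_{(\cX,\cL)}(\chi/k)$.

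For comparison, the paper never makes any pointwise claim. It quotes \cite{His16a} for the purely measure-level identity $\DH_{(\cX,\cL)}=\lim_{k}N_k^{-1}\sum_i\delta_{\lambda_i/k}$, which holds for arbitrary test configurations and immediately gives metric-independence, and then matches the moments $\sum_\chi G_{(\cX,\cL)}^p(\chi/k)$ with $\sum_i(\lambda_i/k)^p$ by pairing the filtration jump of each eigensection with a weight $\lambda_i$ through (\ref{filtration to central fiber}) and invoking the Hausdorff moment theorem. If you want to keep your route, you must either restrict to a setting in which $\dot{\phi}^0$ genuinely factors through $m_{\phi^0}$, or abandon the pointwise identity and argue, as the paper does, only at the level of the distributions $\dot{\phi}^0_*(V^{-1}\omega^n)$ and the weight-counting measures.
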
 

\begin{proof}
The identity $\DH_{(\cX, \cL)}=\lim_{k \to \infty} \frac{1}{N_k}\sum \delta_{\frac{\lambda_i}{k}}$ is shown in \cite{His16a}. 
For any $p \geq 1$ we have 
\begin{align*}
\int_\R t^p (G_{(\cX, \cL)})_* \sum \d_{\frac{\chi}{k}} 
= \int_P G_{(\cX, \cL)}^p \sum \d_{\frac{\chi}{k}} 
=\sum G_{(\cX, \cL)}^p(\frac{\chi}{k}), 
\end{align*}
where the summation is for all $s_\chi \in H^0(X, kL)$. 
In view of the Hausdorff moment theorem it remains to show 
\begin{equation*}
\sum G_{(\cX, \cL)}^p(\frac{\chi}{k}) = \sum (\frac{\lambda_i}{k})^p. 
\end{equation*}
We fix $\chi$ and by the linearly boundedness (\ref{linearly bounded}) take the largest $t$ such that $s_\chi \in F^{\lceil kt \rceil}H^0(X, kL)$ but $s_\chi \notin F^{\lceil kt \rceil+1}H^0(X, kL)$. 
From (\ref{filtration to central fiber}) such $kt$ one-to-one corresponds to $\lambda_i$ so we complete the proof.  
\end{proof} 

\begin{rem}
In \cite{WN12}, the associated concave function on the {\em Okounkov body} is in fact defined for any possibly non-equivariant test configuration. 
If $X$ is toric polarized manifold the Okounkov body and the associated concave function is equivalent to the present construction. 
In our setting $G_{(\cX, \cL)}$ is to be a piecewise-linear function.
\end{rem} 

We conclude this subsection describing the invariants $E^\NA, J^\NA$ in terms of the $\G_m$-action. 

\begin{prop}[\cite{BHJ15} Proposition $7.8$, Theorem 5.16]\label{E in terms of action}
For any test configuration, the non-Archimedean Monge-Amp\`ere energy satisfies 
\begin{align*}
E^\NA(\cX, \cL) = \int_\R t \DH_{(\cX, \cL)} = \lim_{k\to \infty} \frac{1}{N_k}\sum_{i=1}^{N_k} \frac{\lambda_k}{k}. 
\end{align*} 
The functional $L_0$ satisfies 
\begin{align*}
L_0^\NA(\cX, \cL) = \sup \supp \DH_{(\cX, \cL)} = \lim_{k\to \infty} \max_i \frac{\lambda_i}{k}. 
\end{align*} 
Moreover, $\max_i \frac{\lambda_i}{k}$ is stable in $k$. 
More precisely, it is enough to take a sufficiently divisible $k$ so that $k\cL$ is globally generated. 
If there exists a domination $\rho\colon \cX \to X_{\A^1}$, 
let $E_0$ be the strict trandsform of $X\times\{0\}$ and $D:=\cL-\rho^*L_{\A^1}$ be the unique $\Q$-divisor supported on $\cX_0$.  
We then have 
\begin{align*}
L_0^\NA(\cX, \cL) =\ord_{E_0}D.
\end{align*}  
\end{prop}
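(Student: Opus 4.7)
The plan is to derive all three identities from the weight description of the Duistermaat--Heckman measure $\DH_{(\cX,\cL)} = \lim_k N_k^{-1}\sum \delta_{\lambda_i/k}$ established in the previous theorem, combined with asymptotic equivariant Riemann--Roch applied to the compactified family $\bar\cX \to \P^1$.

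For the first identity, I would first note that the linear boundedness (\ref{linearly bounded}) forces $\lambda_i/k \in [-C,C]$ uniformly in $i$ and $k$, so the weak convergence of measures upgrades to convergence of first moments: $\int_\R t\,\DH_{(\cX,\cL)} = \lim_k (kN_k)^{-1}\sum_i \lambda_i$. The second equality then follows from equivariant (asymptotic) Riemann--Roch on the polarized $\G_m$-family $(\bar\cX,\bar\cL)$, which yields $\sum_i\lambda_i = \bar\cL^{n+1}k^{n+1}/(n+1)! + O(k^n)$ and $N_k = Vk^n/n! + O(k^{n-1})$; taking the ratio gives the claimed $\bar\cL^{n+1}/((n+1)V) = E^\NA(\cX,\cL)$.

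For the supremum--support formula, the uniform bounded support again promotes weak convergence to convergence of $\sup\supp$, giving the first equality. The stability in $k$ (for $k\cL$ globally generated) of $\max_i\lambda_i/k$ is a Fekete-type argument: the multiplicativity of $F^\bullet$ makes $e_k := \max\{\lambda : F^\lambda H^0(X,kL)\neq 0\}$ superadditive in $k$, and saturation via base-point freeness stabilizes $e_k/k$ at its limiting value. To match this with the intersection-theoretic definition $L_0^\NA = V^{-1}(\rho^*L_{\A^1})\bar\cL^n$, I would decompose $\bar\cL = \rho^*\bar L_{\P^1} + D$ on $\bar\cX$, where $D = \sum_i b_i E_i$ is the unique $\Q$-divisor supported on the central fiber and $E_0$ is the strict transform of $X\times\{0\}$. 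A projection-formula computation using $(\rho^*\bar L_{\P^1})^{n+1} = 0$ then reduces $(\rho^*L_{\A^1})\bar\cL^n$ to contributions localized on components of $\cX_0$.

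The final identity $L_0^\NA = \ord_{E_0}D = b_0$ is the content of this localization: contributions from exceptional components $E_i$ ($i\neq 0$) vanish because such $E_i$ are $\rho$-contracted to proper subvarieties of $X\times\{0\}$, so $(\rho^*\bar L_{\P^1})^n$ restricts trivially to them; the only surviving term is $b_0(\rho^*\bar L_{\P^1})^n\cdot E_0 = b_0\cdot L^n = b_0 V$, using that $\rho|_{E_0}$ is birational onto $X\times\{0\}$. The main technical obstacle in this plan is justifying the vanishing of the exceptional contributions rigorously, which rests on dimension estimates for the images $\rho(E_i)$ combined with the verticality of $D$ for the projection $\pi$; once these are in place, the three formulas assemble into the proposition.
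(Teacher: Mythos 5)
The paper does not prove this proposition at all --- it simply cites \cite{BHJ15} (Proposition 7.8 and Theorem 5.16) --- so your reconstruction is being measured against that source, whose route you essentially follow: equivariant Riemann--Roch for the first moment, multiplicativity of the filtration for the maximal weight, and the decomposition $\bar\cL=\rho^*\bar L_{\P^1}+D$ for the intersection-theoretic formula. The first-moment argument is fine (bounded support does upgrade weak convergence to convergence of $\int t\,d\mu_k$, and the weight asymptotics $\sum_i\lambda_i=\bar\cL^{n+1}k^{n+1}/(n+1)!+O(k^n)$ are standard). The genuine gap is in the claim that ``uniform bounded support promotes weak convergence to convergence of $\sup\supp$'': this is false in general, as $(1-1/k)\delta_0+(1/k)\delta_1\to\delta_0$ weakly shows. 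Portmanteau only yields $\liminf_k\max_i\lambda_i/k\geq\sup\supp\DH_{(\cX,\cL)}$. For the reverse inequality you must show that a \emph{definite fraction} of the weights stays near the top, i.e. $\dim F^{k(\lambda_{\max}-\e)}H^0(X,kL)\geq c(\e)k^n$ with $c(\e)>0$; this is where multiplicativity enters beyond Fekete (fix a nonzero $s\in F^{\lambda}H^0(X,k_0L)$ with $\lambda/k_0$ close to the limit, and use multiplication by $s^m$ to inject $H^0(X,kL)$ into a high level of the filtration of $H^0(X,(k+mk_0)L)$). Without this the identity $L_0^\NA=\sup\supp\DH_{(\cX,\cL)}$ is unproved.

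The second soft spot is the intersection computation. The identity that localizes cleanly is
\begin{equation*}
\bar\cL\cdot(\rho^*\bar L_{\P^1})^n=(\rho^*\bar L_{\P^1})^{n+1}+D\cdot(\rho^*\bar L_{\P^1})^n=0+\sum_i b_i\,(\rho^*\bar L_{\P^1})^n\cdot E_i=b_0V,
\end{equation*}
with \emph{one} factor of $\bar\cL$ and $n$ factors of $\rho^*\bar L_{\P^1}$, and with $\rho$ the domination of the \emph{trivial} product family (for a twisted target $(\rho^*\bar L_{\P^1})^{n+1}\neq0$ and $\ord_{E_0}D$ changes --- e.g.\ a product configuration dominating itself gives $D=0$). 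Telescoping the product $(\rho^*L_{\A^1})\cdot\bar\cL^n$ as literally printed in Definition \ref{definition of J^NA} instead produces cross terms $(\rho^*\bar L_{\P^1})^j\cdot D\cdot\bar\cL^{n-j}$ with $j<n$, which neither vanish on the exceptional components nor sum to $b_0V$ (that quantity computes the slope of $V^{-1}\int_X(\phi-\phi_0)\MA(\phi)$, not of $\sup_X(\phi-\phi_0)$). Your displayed surviving term $b_0(\rho^*\bar L_{\P^1})^n\cdot E_0$ belongs to the first expansion, so you have implicitly switched to the correct formula; make that explicit. The remaining ingredients --- superadditivity of $\max_i\lambda_i$, saturation via global generation giving $\max_i\lambda_i=k\,\ord_{E_0}D$ exactly, and $(\rho^*\bar L_{\P^1})^n\cdot E_i=0$ for $\rho$-contracted $E_i$ by the projection formula --- are sound.
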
 

\subsection{Relative setting}\label{relative setting}

Let us discuss the relative stability. 
From now on we fix $\eta$, $G=\Aut(X, \eta)$, $T=C(G)$, and $K\subset G$ a maximal compact subgroup containing $S$.   
The identical one-parameter subgroup to $T$ is denoted by $1 \in N$. 
Test configurations are assumed to be $G$-equivariant, as the metrics were $K$-invariant. 

\begin{dfn}\label{T-equivariant test configuration} 
Let $G$ be a reductive algebraic group. A test configuration $(\cX, \cL)$ endowed with $\G_m \times G$-action is $G$-equivariant if it is compatible with the equipped $\G_m$-action on $(\cX, \cL)$ and the $G$-action on $(X, L)=(\cX_1, \cL_1)$. 
\end{dfn} 

Notice that we imposed the commutativity with $G$, on the $\G_m$-action. 
In particular $G$ acts on the central fiber $\cX_0$. 
The $G$-equivariance is not too much restrictive, as one can see \cite{DS16}, Theorem 1, in the K\"ahler-Einstein case. 
See also Example \ref{example of G-invariant ideals} below. 

\begin{exam}
Consider a $G$-invariant ideal $I \subset \cO_X$. 
Let $\rho\colon \cX\to X_{\A^1}$ be the normalization of the blow-up along the ideal $\cJ:=I\cO_{X_{\A^1}}+\tau\cO_{X_{\A^1}}$ and $E$ be the exceptional divisor.  
We take $\e>0$ and set $\cL:= \rho^*L_{\A^1} -\e E$.  
This typical test configuration called {\em deformation to the normal cone} is intensively studied in \cite{RT07}. 
Indeed one can show that $\cL$ is ample for every sufficiently small $\e$. 
If for example the support $V$ of $\cJ$ is smooth we may write $E=\P(N_{V/X} \oplus \cO_X)$ as the normal cone. 
The induced $\G_m$-action is trivial on the normal bundle $N_{V/X}$ and the is the simple multiplication on $\cO_X$. 
Since $\cJ$ is $G$-invariant, $(\cX, \cL)$ inherits the $G$-action so that $\rho$ is equivariant. 
In this construction we observe that the two actions actually commute to each other. 

Let us consider the case $G=\Aut(X, -K_X)$.  
For example, $\P^2$ does not have any $G$-invariant ideal. 
If $X$ is the one point blow-up of $\P^2$ any $G$-invariant ideal is supported on the exceptional divisor. 
We may check that the deformation to the normal cone prevent $X$ to be D-semistable. 
\end{exam} 

The starting point here is to take the inner product of such a test configuration with arbitrary one-parameter subgroups, extending the definition of \cite{FM95}. 
The equipped $\lambda\colon \G_m \to \Aut(\cX, \cL)$ induces the action to  $H^0(\cX_0, k\cL_0)$ for every $k\geq 1$.  
Since $\cX$ is normal and is a family over the curve, it is flat. 
It follows $H^0(\cX_0, k\cL_0) \simeq H^0(X, kL)$ for any sufficiently large $k$. 
In fact we may have a $G$-equivariant trivialization of the vector bundle $\pi_*(k\cL)$ over $\A^1$. 

\begin{prop}\label{T-equivariant trivialization}
The $G$-equivariant algebraic vector bundle $E=\pi_*(k\cL)$ on the affine line $\A^1$ is $G$-equivariantly isomorphic to $E_0 \times \A^1$. 
\end{prop}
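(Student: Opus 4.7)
The plan is to descend to a module-theoretic statement over $k[t] = \cO(\A^1)$, solve it using the structure of graded modules over this PID together with reductivity of $G$, and then sheafify back.

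First I would set up the algebraic object. Since $\pi$ is flat (as $\cX$ is normal over the smooth curve $\A^1$) and $\cL$ is $\pi$-ample, for $k$ sufficiently large we have $R^i\pi_*(k\cL) = 0$ for $i > 0$, and $E = \pi_*(k\cL)$ is locally free of rank $N_k$ with $E_\tau \cong H^0(\cX_\tau, k\cL_\tau)$ for every $\tau \in \A^1$. Because $G$ commutes with the $\G_m$-action on $\cX$ and $\G_m$ acts transitively on $\A^1 \setminus \{0\}$, the group $G$ preserves every fiber $\cX_\tau$ (once it preserves $\cX_1$); in particular $G$ acts trivially on the base $\A^1$. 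Thus $M := H^0(\A^1, E) = H^0(\cX, k\cL)$ is a finitely generated module over $k[t]$, equipped with a $\Z$-grading from the $\G_m$-action (with $t$ placed in degree $1$) and a $k[t]$-linear $G$-action commuting with the grading. Local freeness of $E$ translates to $M$ being a free $k[t]$-module.

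Next I would establish the key \emph{graded $G$-equivariant lifting lemma}: the quotient $E_0 = M/tM$ admits a graded $G$-equivariant section into $M$. For each weight $\lambda$ the sequence
\begin{equation*}
0 \longrightarrow t \cdot M_{\lambda - 1} \longrightarrow M_\lambda \longrightarrow (E_0)_\lambda \longrightarrow 0
\end{equation*}
is an exact sequence of \emph{finite-dimensional} $G$-representations, the finite-dimensionality of $M_\lambda$ following from $M$ being finitely generated as a graded $k[t]$-module (equivalently, from the linear boundedness \eqref{linearly bounded}). Since $G$ is reductive, complete reducibility produces a $G$-equivariant splitting $\sigma_\lambda \colon (E_0)_\lambda \to M_\lambda$; assembling these yields a graded $G$-equivariant section $\sigma \colon E_0 \to M$.

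To finish, extend $\sigma$ $k[t]$-linearly to a $(k[t], G \times \G_m)$-equivariant homomorphism
\begin{equation*}
\Phi \colon E_0 \otimes_k k[t] \longrightarrow M.
\end{equation*}
By construction $\Phi$ reduces to the identity modulo $t$, so graded Nakayama forces surjectivity; both sides are free $k[t]$-modules of the same rank $N_k = \dim_k E_0$, so $\Phi$ is an isomorphism. Sheafifying over $\A^1$ gives a $(G \times \G_m)$-equivariant identification $E \cong E_0 \otimes_k \cO_{\A^1}$, and forgetting the $\G_m$-structure proves the statement. The main obstacle is the lifting step: one simultaneously needs the $\G_m$-grading to keep each weight space $M_\lambda$ finite-dimensional so that representation-theoretic tools apply, and one needs reductivity of $G$ to split the sequence equivariantly. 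Without reductivity, the structure theorem for graded modules over the graded PID $k[t]$ still furnishes a $\G_m$-equivariant trivialization, but it cannot in general be chosen compatible with $G$.
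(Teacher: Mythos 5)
Your proof is correct, and it reaches the same conclusion by a mildly different packaging of the same two inputs. The paper decomposes $H^0(\A^1,E)$ into $\G_m\times G$-isotypic pieces, introduces the filtration $F^\lambda V_\chi$ of the generic fiber $V=H^0(X,kL)$ as the image of the weight spaces under restriction to $\tau=1$, identifies $H^0(\A^1,E)\cong\bigoplus_\lambda \tau^{-\lambda}F^\lambda V$ and $E_0\cong\bigoplus_\lambda F^\lambda V/F^{\lambda+1}V$, and then obtains the trivialization by choosing a basis of $V$ compatible with the filtration and the isotypic decomposition --- which is again an appeal to complete reducibility. You instead split the surjections $M_\lambda\to(E_0)_\lambda$ onto the central fiber degree by degree and close with graded Nakayama plus a rank count. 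The substance is identical in both arguments: finite-dimensionality of each $\G_m$-weight space of the finitely generated graded $k[t]$-module, and linear reducibility of the reductive $G$ to split short exact sequences of its finite-dimensional algebraic representations. What the paper's route buys is the explicit identification of the weight spaces $H^0(\cX_0,k\cL_0)_\lambda$ with $F^\lambda H^0(X,kL)/F^{\lambda+1}H^0(X,kL)$, i.e.\ (\ref{filtration to central fiber}), which is used elsewhere in the text; your route is shorter and avoids the filtration bookkeeping (in particular the computation of the kernel of $H^0(\A^1,E)\to V$ as $(\tau-1)H^0(\A^1,E)$), but it delivers only the trivialization itself and not that identification. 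Your preliminary observation that $G$ preserves each fiber $\cX_\tau$ --- deduced from equivariance at $\tau=1$ together with commutation with the $\G_m$-action, which is transitive on $\A^1\setminus\{0\}$ --- makes explicit a point the paper leaves implicit.
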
 
\begin{proof} 
For the case $G=\{\id \}$ we refer \cite{BHJ15} Proposition 1.3. 
Taking $M$ into the account the same argument works. 
Indeed from the commutativity of the first component $\G_m$ with the second $G$, $G$-action does not effect. 
Let $M_G$ be the lattice of weights and 
\begin{equation}
H^0(\A^1, E) = \bigoplus_{(\lambda, \chi) \in \Z \oplus M_G} H^0(\A^1, E)_{(\lambda, \chi)} 
\end{equation}
be the decomposition to the irreducible representations. 
Set $V:=E_1=H^0(X, kL)$, $V_\chi :=H^0(X, kL)_\chi$ and $F^\lambda V_\chi$ as the image of $H^0(\A^1, E)_{(\lambda, \chi)}$ under the restriction map $H^0(\A^1, E) \to E_1$. 
Definition \ref{T-equivariant test configuration} implies $F^\lambda V_\chi \subset V_\chi$. 
Since $\tau$ has weight $-1$ with respect to the $\G_m$-action on the base $\C$, multiplication by $\tau$ induces $F^{\lambda+1} V_\chi \subset F^{\lambda} V_\chi$. 
Since $F^\lambda V_\chi = V_\chi$ for $\lambda \ll 0$ and $V=\oplus_\chi V_\chi$, the above map sending $\sum \tau^{-\lambda} v_\lambda$ to $\sum v_\lambda$ is surjective. 
On the other hand, if $\sum \tau^{-\lambda} v_\lambda$ lies in the kernel, 
$w_\lambda:= -\sum_{\lambda' \geq \lambda} v_{\lambda'}$ in $F^\lambda V := \oplus_\chi F^\lambda V_\chi $ vanishes for $\lambda \ll 0$. 
Since $v_\lambda =w_{\lambda+1}-w_\lambda$, it means that $\sum \tau^{-\lambda} v_\lambda$ is in $(\tau-1)H^0(\A^1, E)$. 
Thus we have $H^0(\A^1, E)/(\tau-1)H^0(\A^1, E)\simeq V$ and the equivariant isomorphism 
\begin{equation*}
E\vert_{\A^1\setminus \{0\}} \simeq V \times \A^1\setminus \{0\}. 
\end{equation*} 
Similarly, by sending $\sum \tau^{-\lambda} v_\lambda$ to $v_{\lambda}$ modulo $F^{\lambda+1}V$ we may show 
\begin{equation*}
E_0 \simeq \bigoplus_{\lambda \in \Z} F^\lambda V/F^{\lambda +1}V. 
\end{equation*} 
It follows the equivariant isomorphism 
\begin{equation*}
H^0(X, E) \simeq \bigoplus_{\lambda \in \Z} \tau^{-\lambda} F^\lambda V. 
\end{equation*}
By choosing a basis compatible the filtration and $F^\lambda V = \oplus_\chi F^\lambda V_\chi $, we obtain a required equivariant trivialization. 
\end{proof} 

Now consider $G=\Aut^0(X, \eta)$. 
Because the test configuration is assumed to be $G$-equivariant,  
given $\mu \in N$ we may simultaneously diagonalize the two actions on $H^0(X, kL)$ and $H^0(\cX_0, k\cL_0)$ so that each weights $\lambda_i$ and $\mu_i$ are assigned for the common vectors under the equivariant trivialization. 
In the sequel we may take any such $\lambda_i$ and $\mu_i$.  

\begin{dfn}[\cite{His16b}]
Let $(\cX, \cL)$ be a $T$-equivariant test configuration. 
For any one-parameter subgroup $\mu \in N$ we have the limit 
\begin{equation*}
\langle (\cX, \cL), \mu \rangle
:= \lim_{k \to \infty} \frac{1}{k^2N_k} \sum_{i=1}^{N_k} \lambda_i \mu_i.  
\end{equation*} 
When $(\cX, \cL)$ is a product configuration generated by some $\lambda \in N$, we denote 
$\langle (\cX, \cL), \mu \rangle=: \langle \lambda, \mu \rangle$. 
\end{dfn} 

For the identical one-parameter subgroup $1 \in N$ 
we observe $\mu_i =k$ and hence Proposition \ref{E in terms of action} shows 
\begin{equation*}
\langle (\cX, \cL), 1 \rangle 
= \lim_{k \to \infty} \frac{1}{kN_k} \sum_{i=1}^{N_k} \lambda_i. 
\end{equation*} 
It is easy to check that homogeneity naturally extends the above inner product to $\mu \in N_\Q$. 
We may further extend the definition to $\mu \in N_\R$ by the following description. 

\begin{thm}[\cite{His16b}]\label{analytic formula for inner product of test configuration and 1-PS}
Let $(\cX, \cL)$ be a $T$-equivariant test configuration and $\mu \in N_\Q$. 
For the associated weak geodesic ray $\phi^t$ and the Hamilton function $h_\mu$ we have 
\begin{equation*}
\langle (\cX, \cL), \mu \rangle
= \frac{1}{V} \int_X \dot{\phi}^0 h_\mu \omega^n.  
\end{equation*}
\end{thm}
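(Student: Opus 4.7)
Both sides are $\Q$-linear in $\mu$, so it suffices to treat $\mu \in N$. The left-hand side is an algebraic second mixed moment of the joint weights $(\lambda_i, \mu_i)$, while the right-hand side is the corresponding mixed moment of the joint Duistermaat-Heckman push-forward $(\dot\phi^0, h_\mu)_*(V^{-1}\omega^n)$. The plan is to approximate the $C^{1,1}$ weak geodesic ray $\phi^t$ by explicit Bergman-type (Fubini-Study) rays, for which the identity reduces to a direct trace computation, and then pass to the limit.

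Using Proposition \ref{T-equivariant trivialization}, fix a smooth reference $\phi_0 \in \cH(X, -K_X)^K$ and, for each large $k$, a $\phi_0$-orthonormal basis $(s_{k,i})_{i=1}^{N_k}$ of $H^0(X, kL)$ that simultaneously diagonalizes the $\G_m$-action from $(\cX, \cL)$ (with weights $\lambda_i$) and the $T$-action via $\mu$ (with weights $\mu_i$). Thus each $s_{k,i}$ lies in a $T$-isotypic component of some character $\chi_i \in M$ with $\mu_i = \langle \mu, \chi_i\rangle$. Define the Fubini-Study ray
\[
\psi_k^t(x) := \frac{1}{k}\log \sum_{i=1}^{N_k} |s_{k,i}(x)|^2_{k\phi_0}\, e^{-2t\lambda_i/k},
\]
a smooth subgeodesic ray compatible with $(\cX, \cL)$. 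By Phong-Sturm-type approximation in the filtered setting (see \cite{PS07}), $\psi_k^t \to \phi^t$ in $L^1$ and $\dot\psi_k^0 \to \dot\phi^0$ in $L^1(V^{-1}\omega^n)$, while the linear boundedness (\ref{linearly bounded}) of the filtration yields a uniform $L^\infty$ bound on $\dot\psi_k^0$. Since $h_\mu$ is bounded and metric-independent, dominated convergence reduces the theorem to verifying
\[
\lim_{k\to\infty} \frac{1}{V}\int_X \dot\psi_k^0 \, h_\mu\, \omega^n_{\phi_0} = \langle (\cX, \cL), \mu \rangle.
\]

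The last identity is established by direct computation: differentiating $\psi_k^t$ at $t=0$ produces a $\lambda$-weighted Bergman density, and combined with the classical concentration estimate that $|s_{k,i}|^2_{k\phi_0}\omega^n_{\phi_0}$ peaks near $m_{\phi_0}^{-1}(\chi_i/k)$, one obtains
\[
\frac{1}{V}\int_X \dot\psi_k^0 \, h_\mu\, \omega_{\phi_0}^n = -\frac{2}{k^2 N_k}\sum_i \lambda_i\mu_i + o(1),
\]
where cross terms $\int s_{k,i}\overline{s_{k,j}}h_\mu\,\omega_{\phi_0}^n$ with $\chi_i \neq \chi_j$ vanish by $T$-equivariance. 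The main obstacle will be promoting this heuristic to uniform estimates over all $N_k$ basis vectors simultaneously; this requires the standard Bergman-kernel asymptotics (Tian-Yau-Zelditch-Catlin, together with their equivariant/moment-map refinements) and was carried out in detail in \cite{His16b}.
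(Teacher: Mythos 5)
The paper itself offers no proof of this statement: it is imported verbatim from \cite{His16b}, so there is no in-paper argument to compare yours against. Judged on its own terms, your plan has the right overall shape and is in the spirit of the cited source: reduce to $\mu\in N$ by linearity, use Proposition \ref{T-equivariant trivialization} to pick a basis simultaneously diagonalizing the $\G_m$-action of $(\cX,\cL)$ and the $T$-action, build equivariant Fubini--Study rays, kill cross terms by $T$-invariance of $h_\mu$ and $\omega_{\phi_0}$, and identify the limit through equivariant Bergman/moment-map asymptotics.

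Two concrete problems remain. First, your normalization is wrong: with $\psi_k^t=\frac1k\log\sum_i|s_{k,i}|^2_{k\phi_0}e^{-2t\lambda_i/k}$ one gets $\dot\psi_k^0=-\frac{2}{k^2}\,\bigl(\sum_i\lambda_i|s_{k,i}|^2\bigr)/\bigl(\sum_j|s_{k,j}|^2\bigr)$, which is $O(1/k)$ by the linear boundedness $\abs{\lambda_i}\leq Ck$ of (\ref{linearly bounded}); hence $\dot\psi_k^0\to 0$, not $\dot\phi^0$. The exponent must not carry the extra $1/k$, and even after that correction your displayed intermediate identity converges to $-2\langle(\cX,\cL),\mu\rangle$ rather than $\langle(\cX,\cL),\mu\rangle$, so the signs and factors have to be rechecked against the paper's conventions for $d^c$, for $h_\mu=\langle\mu,m_\phi\rangle$, and for the compatibility $\phi^t(x)=\Phi(\lambda(e^{-t})(x,1))$. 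Second, and more seriously, the two steps you invoke as known --- the $L^1$ convergence $\dot\psi_k^0\to\dot\phi^0$ and the uniform concentration of the weighted Bergman densities over all $N_k$ basis vectors --- are not routine consequences of Phong--Sturm plus Tian--Yau--Zelditch; they are essentially equivalent to the identity $\DH_{(\cX,\cL)}=\dot\phi^0_*(V^{-1}\omega^n)$ of \cite{His16a} and its joint refinement in \cite{His16b}, which are proved there by filtration/Okounkov-body and $L^2$-extension techniques rather than by derivative convergence of Bergman rays (whose initial slopes converge only in rather weak senses). As written, the crux of the theorem is deferred to the reference rather than established.
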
 

In view of the above result, it is also natural to define 
\begin{equation}
\langle \Phi, \mu \rangle
:= \frac{1}{V} \int_X \dot{\phi}^0 h_\mu \omega^n 
\end{equation} 
for arbitraty psh geodesic $\Phi$ with $C^{1, 1}$-regularity. 
When $\Phi$ is associated with a test configuration the right-hand side gives the slope of the affine function $E_g(\phi^t)$.  

\begin{cor}\label{slope of modified E}
Let $(\cX, \cL)$ be a test configuration and $\phi^t$ the associated weak geodesic ray. 
For any $\mu \in N_\Q$ putting $g(x):= \langle x, \mu \rangle -\int_P \langle x, \mu \rangle\DH_T$ we have 
\begin{equation*}
\langle (\cX, \cL), \mu \rangle = \lim_{t \to \infty}  \frac{E_g(\phi^t)}{t}. 
\end{equation*} 
\end{cor}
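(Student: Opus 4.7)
Plan: The proof combines three ingredients already available in the excerpt: (i) the geodesic affineness of $E_g$ that follows from the fiber-integration identity \eqref{fiber integration formula for E}; (ii) the differentiation formula $(dE_g)_\phi=\MA_g(\phi)$ from Theorem \ref{property of E_g}; and (iii) the analytic description of the inner product supplied by Theorem \ref{analytic formula for inner product of test configuration and 1-PS}.

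First, I would note that for the psh geodesic ray $\Phi$ associated to $(\cX,\cL)$ one has the degenerate Monge-Amp\`ere equation $(dd^c_{\tau,x}\Phi)^{n+1}=0$ on $\cX_{\Delta}$, so substituting into \eqref{fiber integration formula for E} yields $dd^c_\tau E_g(\phi^t)=0$. Hence $t\mapsto E_g(\phi^t)$ is affine on $[0,\infty)$, and in particular
\begin{equation*}
\lim_{t\to\infty}\frac{E_g(\phi^t)}{t}=\frac{d}{dt}\bigg|_{t=0^+}E_g(\phi^t).
\end{equation*}
This reduces the problem to identifying the right derivative at the smooth endpoint $\phi^0$.

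Second, I would compute this derivative via Theorem \ref{property of E_g}, which at smooth points gives
\begin{equation*}
\frac{d}{dt}\bigg|_{t=0^+}E_g(\phi^t)=\int_X \dot\phi^0\,\MA_g(\phi^0)=\frac{1}{V}\int_X \dot\phi^0\,g(m_{\phi^0})\,\omega_{\phi^0}^n.
\end{equation*}
The right-derivative $\dot\phi^0=\inf_{t>0}(\phi^t-\phi^0)/t$ is pointwise defined and uniformly bounded thanks to the $C^{1,1}$-regularity of the weak geodesic (together with the linear boundedness \eqref{linearly bounded}), so the integral is well-defined. For the rigorous justification one approximates $\Phi$ from above by a decreasing sequence of smooth subgeodesics whose potentials share a common $L^\infty$-bound for $\dot\phi^0$, then passes to the limit using the continuity of $E_g$ along monotone sequences stated in Theorem \ref{property of E_g}.

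Third, substitute the explicit formula $g(m_{\phi^0}(x))=h_\mu(x)-c$ with $c=\int_P\langle y,\mu\rangle\,\DH_T$ into the expression for the slope:
\begin{equation*}
\frac{d}{dt}\bigg|_{t=0^+}E_g(\phi^t)=\frac{1}{V}\int_X \dot\phi^0\,h_\mu\,\omega^n-\frac{c}{V}\int_X\dot\phi^0\,\omega^n.
\end{equation*}
Theorem \ref{analytic formula for inner product of test configuration and 1-PS} identifies the first term as $\langle(\cX,\cL),\mu\rangle$, and the subtraction is compatible with the trace-free normalization built into the choice of $g$ (so that $\MA_g(\phi^0)$ has zero total mass; equivalently, $\dot\phi^0$ may be freely shifted by constants). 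This finishes the proof.

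The main obstacle is the second step: making the derivative formula $dE_g=\MA_g$ rigorous along the merely $C^{1,1}$ geodesic rather than a smooth path. The cleanest route is to reduce to the smooth case by truncating the weak geodesic (as in \cite{BDL15}) and invoking the upper-semicontinuity and monotone continuity of $E_g$ on $\PSH(X,-K_X)^K$; once the existence of $\dot\phi^0 \in L^\infty$ is in hand, the computation becomes the manipulation carried out above.
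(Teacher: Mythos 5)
Your overall route is the same as the paper's implicit one: the corollary is meant to follow by combining the affineness of $E_g$ along the associated psh geodesic ray (via the fiber integration formula and the degenerate Monge--Amp\`ere equation) with the identification of the slope $\int_X\dot\phi^0\,\MA_g(\phi^0)$ supplied by Theorem \ref{analytic formula for inner product of test configuration and 1-PS}; your first two steps fill in exactly the technical justifications the paper leaves unsaid, and they are fine.

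Your final step, however, has a genuine gap. Writing $c=\int_P\langle x,\mu\rangle\,\DH_T$, your computation correctly gives
\begin{equation*}
\lim_{t\to\infty}\frac{E_g(\phi^t)}{t}
=\frac{1}{V}\int_X\dot\phi^0\,h_\mu\,\omega^n-\frac{c}{V}\int_X\dot\phi^0\,\omega^n
=\langle(\cX,\cL),\mu\rangle-c\,E^\NA(\cX,\cL),
\end{equation*}
but the reason you offer for discarding the second term does not work: the vanishing of the total mass of $\MA_g(\phi^0)$ lets you shift $\dot\phi^0$ by a constant without changing $\int_X\dot\phi^0\,\MA_g(\phi^0)$, yet it says nothing about $\int_X\dot\phi^0\,\omega^n$, which equals $V\,E^\NA(\cX,\cL)$ and is nonzero for a general test configuration. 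One can see the term is really there: twisting the $\G_m$-action by $a$ times the identical one-parameter subgroup changes $\langle(\cX,\cL),\mu\rangle$ by $a\,c$ while leaving the slope of $E_g$ unchanged (since $\int_X(h_\mu-c)\,\omega^n=0$), so the identity as stated cannot hold for all test configurations unless $c=0$. This discrepancy is inherited from the statement itself; in the only case the paper uses --- $g=1+h_\eta$, giving $E_\eta^\NA(\cX,\cL)=\langle(\cX,\cL),1+\eta\rangle$ --- one has $\int_P\langle\eta,x\rangle\,\DH_T=0$ by \eqref{extremal is orthogonal to constant}, so the constant contributes nothing and your argument closes. To be correct in general you should either record the formula with the extra term $-c\,E^\NA(\cX,\cL)$, or restrict to $\mu$ satisfying $\int_P\langle x,\mu\rangle\,\DH_T=0$ (which covers $\eta$).
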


Now we choose the extremal one-parameter subgroup $\eta \in N_\Q$ to define the non-Archimedean counterpart of $E_\eta$.

\begin{dfn}
The non-Archimedean counterpart of the modified Monge-Amp\`ere energy $E_\eta$ is defined to be  
\begin{equation*}
E_\eta^\NA(\cX, \cL) := \langle (\cX, \cL), 1+\eta \rangle.  
\end{equation*}
We introduce the modifed non-Archimedean energies as 
$D_\eta^\NA(\cX, \cL) := L^\NA(\cX, \cL) - E^\NA_\eta(\cX, \cL)$ and 
$J_\eta^\NA(\cX, \cL) := L_0^\NA(\cX, \cL) - E^\NA_\eta(\cX, \cL)$. 
\end{dfn}

Note that $J_\eta^\NA$ is not necessarily non-negative, just as $J_\eta$ was. 
We shall see that $T \subset \Aut(X, \eta)$ is enough to examine the positivity of $J_\eta^\NA$. 

\begin{prop}
If $m_X > 0$, then $J_\eta^\NA(\cX, \cL)\geq 0$ and the equality holds iff the $T$-equivariant $(\cX, \cL)$ is the trivial test configuration. 
\end{prop}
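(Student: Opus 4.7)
The plan is to reduce both invariants $L_0^\NA$ and $E_\eta^\NA$ to integrals against the derivative $\dot{\phi}^0$ of the $C^{1,1}$ psh geodesic ray associated to $(\cX,\cL)$. By Theorem \ref{analytic formula for inner product of test configuration and 1-PS} together with the linear extension of the pairing to $N_\R$,
\[
E_\eta^\NA(\cX,\cL) \;=\; \langle (\cX,\cL),\,1+\eta\rangle \;=\; \frac{1}{V}\int_X \dot{\phi}^0(1+h_\eta)\,\omega^n,
\]
and by Proposition \ref{E in terms of action}, $L_0^\NA(\cX,\cL) = \sup_X \dot{\phi}^0$, the essential and pointwise suprema coinciding by continuity of $\dot{\phi}^0$. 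Since $\int_X h_\eta\,\omega^n = 0$ by (\ref{extremal is orthogonal to constant}), the weight $(1+h_\eta)V^{-1}\omega^n$ is a probability measure, and so
\[
J_\eta^\NA(\cX,\cL) \;=\; \frac{1}{V}\int_X \bigl(\sup_X \dot{\phi}^0 - \dot{\phi}^0\bigr)(1+h_\eta)\,\omega^n.
\]

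Non-negativity is then clear: the bracket factor is pointwise non-negative and the weight is everywhere strictly positive, since Proposition \ref{m_X formula} together with the hypothesis $m_X > 0$ yields $1+h_\eta \geq m_X > 0$ on $X$. For the equality case, vanishing of the above integral combined with strict positivity of the weight forces $\dot{\phi}^0 = \sup_X \dot{\phi}^0$ almost everywhere, hence identically by continuity. Setting this constant value $c$, I find $L_0^\NA(\cX,\cL) = c$, and since $h_1 \equiv 1$ also $E^\NA(\cX,\cL) = V^{-1}\int_X \dot{\phi}^0\,\omega^n = c$ via Proposition \ref{E in terms of action}, whence $J^\NA(\cX,\cL) = 0$.

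To conclude triviality I would invoke the classical characterization of \cite{BHJ15} stating that a normal test configuration with $J^\NA = 0$ is trivial. The main obstacle lies precisely in this last step: the analytic inequality and its equality analysis are routine given the tools assembled in the previous subsections, but passing from $J^\NA = 0$ to triviality of the $T$-equivariant $(\cX,\cL)$ requires a non-trivial structural input about the central fiber. A more self-contained alternative is to notice that constancy of $\dot{\phi}^0$ forces the Duistermaat--Heckman measure $\DH_{(\cX,\cL)}$ to be a single Dirac mass, which, through the $T$-equivariant trivialization of $\pi_*(k\cL)$ supplied by Proposition \ref{T-equivariant trivialization}, forces the induced filtration on $H^0(X,kL)$ to have all weights equal and thereby exhibits $(\cX,\cL)$ as the polarized product, i.e.\ as the trivial test configuration.
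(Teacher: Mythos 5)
Your argument is correct and is essentially the paper's own proof: both reduce $J_\eta^\NA(\cX,\cL)$ to the analytic expression $\sup_X \dot{\phi}^0 - V^{-1}\int_X \dot{\phi}^0(1+h_\eta)\,\omega^n$ via Theorem \ref{analytic formula for inner product of test configuration and 1-PS} and Proposition \ref{E in terms of action}, use $m_X=\inf_X(1+h_\eta)>0$ to force $\dot{\phi}^0$ to be constant in the equality case, and then invoke \cite{BHJ15}, Theorem A (the paper phrases this as ``$\dot{\phi}^0\equiv 0$ after rescaling the $\G_m$-action implies triviality,'' which is the same input as your ``$J^\NA=0$ implies trivial''). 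Your closing remark about the Duistermaat--Heckman measure degenerating to a Dirac mass is a reasonable alternative gloss on that final citation, but it is not needed.
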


\begin{proof}
It is immediate from Theorem \ref{analytic formula for inner product of test configuration and 1-PS} and \ref{E in terms of action} that 
\begin{equation}\label{analytic J_eta}
J_\eta^\NA(\cX, \cL) = \sup_X \dot{\phi}^0 - \frac{1}{V}\int_X \dot{\phi}^0 (1+h_{\eta})\omega^n. 
\end{equation}
Since we may rescale the $\G_m$-action to have $\sup_X \dot{\phi}^0=0$, from the formula $m_X = \inf_X (1+h_\eta)>0$ implies $J_\eta^\NA(\cX, \cL) >0$, otherwise $\dot{\phi}^0$ is identically zero. 
By \cite{BHJ15} Theorem A, $\dot{\phi}^0 \equiv 0$ implies that $(\cX, \cL)$ is trivial. That is, the product configuration with the trivial action. 
\end{proof}

In terms of the associated concave function, we may write 
\begin{equation}
J_\eta^\NA(\cX, \cL)= 
\max_P G_{(\cX, \cL)} - \frac{1}{V} \int_P G_{(\cX, \cL)}G_{1+\eta} \DH_T. 
\end{equation}
In our definition of stability we assume $m_X > 0$. 
By Proposition \ref{m_X formula}, this additional assumption is very much easier to check than the positivity of $D_\eta^\NA$ for all test configurations. 

Let us return to a general $\mu \in N$ and take a $G$-equivariant trivialization so that the weights $\lambda_i$ and $\mu_i$ are assigned for the common vectors. 
We endow a new $\G_m$-action with the space $(\cX, \cL)$ such that the weights are given by $\lambda_i+\mu_i$. 
Since $T=C(G)$, it indeed gives a $G$-equivariant test configuration which we will denote by $(\cX_\mu, \cL_\mu)$. 
If $\Phi$ is the weak geodesic ray associated with $(\cX, \cL)$, it is easy to see that 
\begin{equation}
\phi_\mu^t(x) := \Phi(\lambda(e^{-t})\mu(e^{-t})(x, 1))  
\end{equation} 
gives the geodesic ray associated with $(\cX_\mu, \cL_\mu)$. 
The homogeneity naturally extends the definition to arbitrary $\mu \in N_\Q$. 
From Theorem \ref{analytic formula for inner product of test configuration and 1-PS}, we may further observe that $J^\NA(\cX_\mu, \cL_\mu)$ is continuous in $\mu \in N_\Q$. 
The next rationality lemma is a key to show the existence of the metric in Theorem A. 

\begin{lem}\label{rationality}
The functional $\mu \mapsto J_\eta^\NA(\cX_\mu, \cL_\mu)$ is rationally piecewise-linear convex function in $N_\R$. 
It is moreover strictly convex in $N_\R/\R$. 
Especially the infimum 
\begin{equation}
J^\NA_T(\cX, \cL):= \inf_{\mu \in N_\R} J^\NA(\cX_\mu, \cL_\mu)  
\end{equation} 
is attained by a rational $\mu$. 
\end{lem}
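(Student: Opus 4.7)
The plan is to reduce everything to the rational piecewise-linear concave function $G_{(\cX,\cL)}$ on the moment polytope $P \subset M_\R$. Because $(\cX_\mu,\cL_\mu)$ was constructed so that the weights on $H^0(\cX_0,k\cL_0)$ shift from $\lambda_i$ to $\lambda_i+\mu_i$, the description of $G_{(\cX,\cL)}$ in terms of the filtration gives the clean identity
\begin{equation*}
G_{(\cX_\mu,\cL_\mu)} = G_{(\cX,\cL)} + G_\mu, \qquad G_\mu(x):=\langle \mu,x\rangle.
\end{equation*}
Plugging into the formula for $J_\eta^\NA$ displayed just above the lemma yields
\begin{equation*}
J_\eta^\NA(\cX_\mu,\cL_\mu) = \max_P\!\bigl(G_{(\cX,\cL)}+G_\mu\bigr) - \tfrac{1}{V}\!\int_P \bigl(G_{(\cX,\cL)}+G_\mu\bigr) G_{1+\eta}\,\DH_T.
\end{equation*}
The integral piece is affine in $\mu$, so all of the geometry lies in the maximum.

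First I would verify rational piecewise-linearity and convexity. The map $\mu\mapsto\max_{x\in P}\bigl(G_{(\cX,\cL)}(x)+\langle\mu,x\rangle\bigr)$ is a supremum of affine forms in $\mu$, hence convex; because $P$ is a rational polytope and $G_{(\cX,\cL)}$ is rational piecewise-linear, the supremum is attained at a vertex of the associated rational polyhedral subdivision of $P$ and thus equals the pointwise maximum of finitely many rational affine functions of $\mu$. Adding the affine correction preserves rational piecewise-linearity and convexity on $N_\R$.

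The main obstacle is strict convexity on $N_\R/\R$, which I would establish as properness modulo $\R\cdot 1$. Computing asymptotic slopes in direction $v\in N_\R$ gives
\begin{equation*}
\lim_{t\to+\infty}\tfrac{1}{t}J_\eta^\NA(\cX_{\mu+tv},\cL_{\mu+tv}) = \max_P G_v - \tfrac{1}{V}\!\int_P G_v\,G_{1+\eta}\,\DH_T,
\end{equation*}
with the mirror expression $\tfrac{1}{V}\!\int_P G_v\,G_{1+\eta}\DH_T-\min_P G_v$ as $t\to-\infty$. Under the standing assumption $m_X>0$, the density $G_{1+\eta}$ is strictly positive, so $G_{1+\eta}\DH_T$ is a positive probability measure on $P$; whenever $v\notin\R\cdot 1$ the Hamilton function $G_v$ is non-constant on $P$ (the identical one-parameter subgroup is the only element of $N_\R$ with constant Hamilton function on $X$), and hence the weighted average $\int_P G_v\,G_{1+\eta}\DH_T$ lies strictly between $\min_P G_v$ and $\max_P G_v$. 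Both one-sided asymptotic slopes are therefore strictly positive, giving properness on $N_\R/\R$.

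Finally, by the rational piecewise-linearity the infimum is attained on a face of the rational polyhedral decomposition of $N_\R$; the invariance under $\mu\mapsto\mu+c\cdot 1$ (from $G_1\equiv 1$ on $P$, which causes the shift by $c$ to cancel between the max and the integral) together with the properness just proved shows that this face, viewed in $N_\R/\R$, is compact, and hence a rational vertex of it provides the desired minimizer $\mu\in N_\Q$.
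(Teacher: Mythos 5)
Your argument is correct and follows essentially the same route as the paper: the paper works directly with the joint weights $\lambda_i,\mu_i$ on $H^0(X,kL)$, observing that $L_0^\NA(\cX_\mu,\cL_\mu)=\max_i(\lambda_i+\mu_i)/k$ is a maximum of finitely many rational affine functions of $\mu$ (stable in $k$ once $k\cL$ is globally generated) while the energy term is affine in $\mu$ --- which is exactly your decomposition, transported to the moment polytope via $G_{(\cX_\mu,\cL_\mu)}=G_{(\cX,\cL)}+G_\mu$ and the identity $(G_{(\cX,\cL)})_*\DH_T=\lim_k\frac{1}{N_k}\sum\delta_{\lambda_i/k}$ that the paper establishes beforehand. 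The only place you go beyond the paper's proof is the properness modulo $\R\cdot 1$, which the paper asserts as obvious and you correctly derive from $m_X>0$ and the non-constancy of $G_v$ on $P$ for $v\notin\R\cdot 1$.
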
 
\begin{proof}
The result was observed in \cite{His18}. 
Indeed by Proposition \ref{E in terms of action} we see that 
\begin{equation*}
J_\eta^\NA(\cX_\mu, \cL_\mu) 
=\max_i \frac{\lambda_i +\mu_i}{k}- \frac{1}{N_k}\sum_{i=1}^{N_k} (\lambda_i +\mu_i)(1+\eta_i).  
\end{equation*}
Thanks to Proposition \ref{E in terms of action} the first term is independent of $k$, as soon as $k\cL$ is globally generated. Note that the condition is independent of $\mu$.  
The second term is affine in $\mu$. 
Therefore, as the function in $\mu$, it is the maximum for finite number of affine functions. 
The function is obviously non-negative and proper in $N_\R/\R$. 
\end{proof} 

The notation $J_T$ and $J_T^\NA$ are consistent. 
We indeed have the slope formula which is the main ingredient in deriving the stability in Theorem A. 

\begin{thm}[\cite{His18}, Theorem B]\label{slope of J_T}
Let $(\cX, \cL)$ be a $T$-equivariant test configuration and $\phi^t$ be the associated weak geodesic ray. We have 
\begin{equation*}
J^\NA_T(\cX, \cL)=\lim_{t \to \infty} \frac{J_T (\phi^t)}{t}. 
\end{equation*} 
\end{thm}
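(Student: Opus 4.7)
The plan is to reduce $J_T(\phi^t)$ to an infimum over one-parameter subgroups $\mu \in N_\R$, compare against the non-Archimedean quantities $J^\NA(\cX_\mu, \cL_\mu)$ via the slope formula of Theorem \ref{slope of D}, and interchange the infimum with the $t \to \infty$ limit through a uniform linear lower bound $J(\phi_\mu^t) \geq t J^\NA(\cX_\mu, \cL_\mu) - C$ with $C$ independent of $\mu$. The reduction is immediate: since $\phi^t \in \cH(X, -K_X)^K$ and $S \subset K$, the compact part of any $\sigma \in T$ acts trivially on $\phi^t$, the non-compact real part $T_{\R,+}$ is parametrised as $\{\mu(e^{-t}) : \mu \in N_\R\}$, and the identity $\mu(e^{-t})^*\phi^t = \phi_\mu^t$ (recorded just before Lemma \ref{rationality}, using the commutativity of the $\lambda$- and $\mu$-actions) gives
\begin{equation*}
J_T(\phi^t) = \inf_{\mu \in N_\R} J(\phi_\mu^t).
\end{equation*}
By continuity of $J(\phi_\mu^t)$ and $J^\NA(\cX_\mu, \cL_\mu)$ in $\mu$ (the latter via Lemma \ref{rationality}), it suffices to treat rational $\mu \in N_\Q$ throughout.

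For the easy ``$\leq$'' inequality, fix any $\mu \in N_\Q$. Theorem \ref{slope of D} applied to the test configuration $(\cX_\mu, \cL_\mu)$ gives $J(\phi_\mu^t)/t \to J^\NA(\cX_\mu, \cL_\mu)$, and since $J_T(\phi^t)/t \leq J(\phi_\mu^t)/t$ we obtain $\limsup_t J_T(\phi^t)/t \leq J^\NA(\cX_\mu, \cL_\mu)$; the infimum over $\mu$ yields $\limsup_t J_T(\phi^t)/t \leq J_T^\NA(\cX, \cL)$.

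For the reverse direction the uniform lower bound is the heart of the matter. Since $\phi_\mu^0 = \phi^0$, and $E$ is affine along the psh geodesic $\phi_\mu^t$ with slope $E^\NA(\cX_\mu, \cL_\mu)$ (Theorem \ref{completeness} and Theorem \ref{slope of D}), one has $E(\phi_\mu^t) = E(\phi^0) + t E^\NA(\cX_\mu, \cL_\mu)$. The $S^1$-invariant psh lift of $\phi_\mu^t$ to $\Delta \times X$ is convex in $t = -\log|\tau|$ for each fixed $x$, so $\phi_\mu^t(x) \geq \phi^0(x) + t\, \dot\phi_\mu^0(x)$, and combining this with Proposition \ref{E in terms of action}'s identity $\sup_X \dot\phi_\mu^0 = L_0^\NA(\cX_\mu, \cL_\mu)$ gives
\begin{equation*}
L_0(\phi_\mu^t) = \sup_X(\phi_\mu^t - \phi_0) \geq \inf_X(\phi^0 - \phi_0) + t\, L_0^\NA(\cX_\mu, \cL_\mu).
\end{equation*}
Subtracting yields $J(\phi_\mu^t) \geq t J^\NA(\cX_\mu, \cL_\mu) - C$ with $C := E(\phi^0) - \inf_X(\phi^0 - \phi_0)$ manifestly independent of $\mu$. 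Dividing by $t$, taking the infimum over $\mu \in N_\Q$, and letting $t \to \infty$ produces $\liminf_t J_T(\phi^t)/t \geq J_T^\NA(\cX, \cL)$. The main obstacle one might worry about is extracting a near-minimizer $\mu_t$ for each $t$ and keeping it bounded modulo the line $\R \cdot 1$ along which $J^\NA$ is translation-invariant; the uniformity of $C$ in $\mu$ is precisely what sidesteps that difficulty.
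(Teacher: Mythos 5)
Your argument is correct. Note first that the paper itself contains no proof of Theorem \ref{slope of J_T}: the statement is imported from \cite{His18}, Theorem B, with only the remark that the minimizer $\s \in T$ of $J_T(\phi^t)$ depends on $t$ and that fixing one torus is the technical crux; the proof of Lemma \ref{boundedness} further hints that the original argument runs through the $I$-functional and the pseudo-triangle inequality (\ref{pseudo-triangle inequality}) to compare the twisted rays. Your route is genuinely different and more elementary: the uniform linear lower bound $J(\phi_\mu^t)\geq t\,J^\NA(\cX_\mu,\cL_\mu)-C$ with $C=E(\phi^0)-\inf_X(\phi^0-\phi_0)$ independent of $\mu$ is exactly what neutralizes the $t$-dependence of the minimizer, and it follows from just two soft facts --- $E$ is affine along the twisted psh geodesic ray with slope $E^\NA(\cX_\mu,\cL_\mu)$, and $t\mapsto\phi_\mu^t(x)$ is convex for each fixed $x$, whence $L_0(\phi_\mu^t)\geq\inf_X(\phi^0-\phi_0)+t\sup_X\dot{\phi}_\mu^0$. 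Two points deserve an explicit word in a write-up. First, you invoke $\sup_X\dot{\phi}_\mu^0=L_0^\NA(\cX_\mu,\cL_\mu)$, whereas Proposition \ref{E in terms of action} literally gives $L_0^\NA=\sup\supp\DH$, i.e.\ the essential supremum of $\dot{\phi}_\mu^0$ with respect to $\omega^n$; fortunately your inequality only needs $\sup_X\dot{\phi}_\mu^0\geq L_0^\NA(\cX_\mu,\cL_\mu)$, which is automatic. Second, the whole scheme rests on the assertion (made in the paper just before Lemma \ref{rationality}) that the twist $\phi_\mu^t=\mu(e^{-t})^*\phi^t$ of the psh geodesic ray is again the psh geodesic ray associated with $(\cX_\mu,\cL_\mu)$; this should be cited or verified explicitly, since it licenses the affineness of $E(\phi_\mu^t)$ as well as the application of Theorem \ref{slope of D} and Proposition \ref{E in terms of action} to the twisted configuration.
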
 

Notice that $\s \in T$ attaining the infimum of $J_T(\phi^t)$ depends on $t$. 
It is at least technically crucial to fix one torus in obtaining this sort of slope formulas. 
See \cite{His18}, Remark 1.8. 
Based on the results, we now arrive at the definition of the desired stability. 

\begin{dfn}\label{relative uniform D-stability}
A Fano manifold $X$ is uniformly relatively D-stable if $m_X >0$, $G=\Aut(X, \eta)$ is reductive, and there exists a constant $\e>0$ such that 
\begin{equation*}
D_\eta^\NA(\cX, \cL) \geq \e J_T^\NA(\cX_\mu, \cL_\mu) 
\end{equation*}
holds for any $G$-equivariant test configuration. 
We say that $X$ is relatively D-semistable if $D_\eta^\NA(\cX, \cL)\geq 0$ for any $G$-equivariant test configuration.   
\end{dfn}

\begin{rem}
In a recent preprint \cite{Yao19} it was proved that The obstruction $m_X>0$ about the automorphism group  automatically follows from the condition $D_\eta^\NA(\cX, \cL) \geq \e J_T^\NA(\cX_\mu, \cL_\mu)$. 
\end{rem} 

Our formulation endows the test configurations with large symmetry $G$.
This is considerably effective in checking the stability of specific Fano manifolds. 

\begin{exam}\label{example of G-invariant ideals}
As we have observed, there is no $G$-invariant ideal when $X=\P^2$. 
It simply implies that $\P^2$ is uniformly relatively D-stable. 
When $X$ is the one point blow-up of $\P^2$, we have the deformation to the normal cone $(\cX, \cL)$ for the exceptional divisor. 
We may check that $(\cX, \cL)$ dominates the product test configuration generated by $\eta$. 
It means that $D_\eta^\NA (\cX, \cL)=J_T^\NA(\cX, \cL)=0$. 
Indeed $X$ admits a Mabuchi soliton, and hence it is uniformly relatively D-stable, by the following general result. 
See \cite{Yao17} for investigation of the general toric Fano manifolds. 
\end{exam} 

In our framework explained so far, existence of the metric naturally implies the stability. 

\begin{thm}\label{metric implies stability}
If a Fano manifold admits a Mabuchi soliton, then it is uniformly relatively D-stable. 
\end{thm}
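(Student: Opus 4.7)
The aim is to derive the three conditions of Definition \ref{relative uniform D-stability} from the existence of a smooth Mabuchi soliton $(\omega_\phi, \eta)$. Two of them are essentially free: since $1+h_\eta = e^\rho > 0$ at every point of the compact manifold $X$, the infimum $m_X = \inf_X(1+h_\eta)$ is strictly positive; and by Corollary \ref{reductivity} the group $G = \Aut^0(X, \eta)$ is reductive. What remains is to produce a uniform constant $\e > 0$ such that
\begin{equation*}
D_\eta^\NA(\cX, \cL) \geq \e J_T^\NA(\cX, \cL)
\end{equation*}
for every $G$-equivariant test configuration $(\cX, \cL)$.

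For this, I would invoke Theorem \ref{metric vs coercivity}, which converts the existence of the soliton into coercivity of the modified D-energy: there exist constants $\e, C > 0$ with $D_\eta(\phi) \geq \e J_T(\phi) - C$ for every smooth $K$-invariant metric $\phi \in \cH(X, -K_X)^K$. Standard density and semicontinuity arguments extend this to the whole finite-energy space $\cE^1(X, -K_X)^K$, using that $L$ is lower semicontinuous, $E_\eta$ is monotone and upper semicontinuous by Theorem \ref{property of E_g}, and $J_T$ is controlled by $d_1$.

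Given an arbitrary $G$-equivariant test configuration $(\cX, \cL)$, I associate the $C^{1,1}$ psh weak geodesic ray $\phi^t$ emanating from a smooth $K$-invariant reference $\phi^0$. Because the equipped $\G_m$-action commutes with $G$, hence with $K$, the ray stays in $\cE^1(X, -K_X)^K$. Applying the extended coercivity, dividing by $t$, and letting $t \to \infty$ yields
\begin{equation*}
\lim_{t\to\infty} \frac{D_\eta(\phi^t)}{t} \geq \e \lim_{t\to\infty} \frac{J_T(\phi^t)}{t}.
\end{equation*}
The right-hand limit equals $\e J_T^\NA(\cX, \cL)$ by Theorem \ref{slope of J_T}. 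The left-hand limit equals $D_\eta^\NA(\cX, \cL) = L^\NA(\cX, \cL) - E_\eta^\NA(\cX, \cL)$ once one combines the slope formula for $L$ from Theorem \ref{slope of D} with the slope formula for the modified Monge--Amp\`ere energy: decomposing $E_\eta = E + E_{h_\eta}$ (valid up to additive constant since $(dE_\eta)_\phi = \MA(\phi) + h_\eta\MA(\phi)$), the first term has slope $\langle (\cX, \cL), 1\rangle = E^\NA(\cX, \cL)$ and the second has slope $\langle (\cX, \cL), \eta\rangle$ by Corollary \ref{slope of modified E} applied to $g = h_\eta$, summing to $\langle(\cX, \cL), 1+\eta\rangle = E_\eta^\NA(\cX, \cL)$.

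The main obstacle is essentially this bookkeeping around the $E_\eta$-slope, since Theorem \ref{slope of D} is stated only for the unmodified functionals and the modified version must be assembled piece by piece; care is required to see that the normalizing constant $\int_P \langle \eta, x\rangle\,\DH_T$ appearing in the choice of $g$ in (\ref{choice of g}) vanishes thanks to (\ref{extremal is orthogonal to constant}), so that the slope formulas of the two sections align correctly. A subsidiary point is verifying that coercivity, originally stated on $\cH(X, -K_X)^K$, persists along the $C^{1,1}$ geodesic ray; this is routine given the semicontinuity results collected in Section \ref{space of finite energy metrics} and Theorem \ref{property of E_g}.
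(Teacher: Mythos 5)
Your proof is correct and takes essentially the same route as the paper: the paper's own proof is precisely the two-step argument of invoking Theorem \ref{metric vs coercivity} for coercivity and then passing to the non-Archimedean inequality via the slope formulas of Theorem \ref{slope of D} and Theorem \ref{slope of J_T}. The extra bookkeeping you supply --- assembling the slope of $E_\eta$ from Corollary \ref{slope of modified E} with the normalizing constant killed by (\ref{extremal is orthogonal to constant}), and extending coercivity from $\cH(X,-K_X)^K$ to the geodesic ray --- is exactly the detail the paper leaves implicit.
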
 

\begin{proof}
By Theorem \ref{metric vs coercivity} we have the coercivity. 
As a consequence of Theorem \ref{slope of D} and Theorem \ref{slope of J_T}, the coercivity implies the stability. 
\end{proof}


\section{Variational approach and proof of the main theorem} 

Standing on the preparation of the last two sections we give a proof of Theorem A. 
After we organized the formulation, the argument is now a simple extension of the variational approach \cite{BBJ15}, to the relative and equivariant setting.  

\subsection{Convergence of weak geodesics}\label{convergence of weak geodesics}

Existence of the metric implies the stability, by Theorem \ref{metric implies stability}. 
Let us assume that a Fano manifold is uniformly relatively D-stable, in the sense of Definition \ref{relative uniform D-stability}. 
Since $J$ and $J_\eta$ are equivalent, one may use $J_\eta$ in replace of $J$. 
In view of the thermodynamical formalism, we have already observed in section \ref{thermodynamical formalism and modified K-energy} that the coercivity properties of the modified D and K-energy (see Definition \ref{modified K-energy}) are equivalent. 
We shall suppose that the modified K-energy is not coercive and lead the contradiction.  

The first step is to construct a weak geodesic ray in which direction the modified K-energy (and therefore D-energy) is not coercive. 
If the coercivity of Definition \ref{coercivity of D_g} fails, we have a sequence $\phi_j \in \cH(X, -K_X)^K$ and $\e_j \to 0$ $(j=1, 2, \dots)$ so that 
\begin{equation}\label{not coercive}
M_\eta(\phi_j) \leq \e_j J(\s^*\phi_j) -j 
\end{equation}
holds for {\em any} $\s \in T$. 
Since both sides are preserved by the constant rescaling $\phi \mapsto \phi+c$ we may take 
\begin{equation}\label{sup}
\sup_X (\phi_j-\phi_0) =0. 
\end{equation} 
We may moreover assume 
\begin{equation}\label{E diverges}
T_j:=-E(\phi_j) \to \infty,  
\end{equation} 
otherwise the uniform version of Skoda's integrability and the weak-compactness of the level set $\{ \phi \in \cE^1(X, -K_X)^K: E(\phi) \geq -C\} $ imply  
\begin{equation*}
D_\eta (\phi_j) \geq -\log C -E_\eta(\phi_j) 
\geq -\log C'. 
\end{equation*} 
Here we used again the comparison (\ref{E_g vs E}) of $E$ and $E_\eta$. 
Then (\ref{not coercive}) yields $J(\phi_j) \to \infty$, which contradicts to the assumption $E(\phi_j) \geq - C$ with (\ref{sup}). 

Let us take a weak geodesic $\phi_j^t$ $(0\leq t \leq -E(\phi_j))$ which joins $\phi_0$ to $\phi_j$. 
For the convergence of $\phi_j^t$ we need to control the relative entropy. 

\begin{thm}[\cite{BBEGZ16}, Theorem 2.17]\label{strong compactness}
The sublevel set 
\begin{equation*}
\bigg\{ \phi \in \cE^1(X, -K_X)^K: H(\MA(\phi)\vert \mu_0) \leq C,~ \sup_X (\phi-\phi_0)=0 \bigg\} 
\end{equation*} 
is compact in the $d_1$-topology. 
\end{thm}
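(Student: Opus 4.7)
The plan is to first extract a weak $L^1$-limit, identify it with a member of the set, and then upgrade $L^1$-convergence to $d_1$-convergence by exploiting the entropy bound. First I would note that the normalization $\sup_X(\phi-\phi_0)=0$ keeps the family inside an $L^1$-compact set of normalized $\omega_0$-psh functions, so any sequence $\phi_j$ in the set has an $L^1$-convergent subsequence $\phi_j \to \phi$ with $\phi \in \PSH(X, -K_X)$. The $K$-invariance passes to the limit by averaging against the Haar measure, and the sup-normalization is preserved after adjusting by a constant thanks to upper semicontinuity of $\sup_X(\cdot-\phi_0)$.

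Next I would verify that the limit remains in the set. Invoking the properness inequality recalled at the end of Section \ref{thermodynamical formalism and modified K-energy}, namely $H(\MA(\phi)\,|\,\mu_0) \geq \alpha E^*(\MA(\phi)) - C'$ for $\alpha$ below Tian's $\alpha$-invariant, one obtains from the entropy bound together with $\sup_X(\phi_j-\phi_0)=0$ a uniform lower bound on $E(\phi_j)$. Upper semicontinuity of $E$ in the $L^1$-topology then yields $E(\phi) > -\infty$, so $\phi \in \cE^1(X, -K_X)^K$. The entropy bound at the limit follows from weak convergence of $\MA(\phi_j)$ to $\MA(\phi)$ along the subsequence (ensured by the entropy control, which rules out escape of mass onto pluripolar sets) together with lower semicontinuity of relative entropy under weak convergence of measures.

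The heart of the argument is upgrading $L^1$-convergence to $d_1$-convergence. By Theorem \ref{completeness}, the $d_1$-topology on $\cE^1$ is the coarsest refinement of the $L^1$-topology making $E$ continuous, so one reduces to showing $E(\phi_j) \to E(\phi)$. Upper semicontinuity of $E$ gives one inequality; for the other I would use that entropy-bounded densities relative to $\mu_0$ form a uniformly integrable family by the Dunford--Pettis criterion, which permits passage to the limit in the telescoping expression for $E(\phi_j) - E(\phi)$ in terms of mixed Monge--Amp\`ere integrals. The main obstacle is exactly this energy convergence step: $E$ is merely upper semicontinuous in the $L^1$-topology, so the jump can genuinely occur in the absence of a priori control, and the entropy bound is the quantitative input preventing mass escape and delivering true continuity along the subsequence.
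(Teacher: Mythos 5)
The paper gives no proof of this statement: it is imported verbatim from \cite{BBEGZ16}, Theorem 2.17, so your proposal can only be measured against the source argument. Your overall architecture (extract an $L^1$-limit, show it stays in the set via the properness inequality $H \geq \alpha E^* - C'$ and upper semicontinuity of $E$, then upgrade to $d_1$-convergence by proving $E(\phi_j)\to E(\phi)$ and invoking Darvas' characterization of the strong topology) is the right one and matches the source. The reduction of $d_1$-convergence to energy convergence, and the use of $H \geq \alpha E^{*} - C'$ to bound $-E(\phi_j)$ from above, are both correct.

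There are, however, two genuine gaps in the decisive step. First, you propose to pass to the limit in ``the telescoping expression for $E(\phi_j)-E(\phi)$ in terms of mixed Monge--Amp\`ere integrals'' using uniform integrability of the densities. But the entropy bound controls only the top-degree measure $\MA(\phi_j)$; it says nothing about the mixed terms $\omega_{\phi_j}^i\wedge\omega_\phi^{n-i}$ for $0<i<n$, whose densities with respect to $\mu_0$ need not be uniformly integrable, so Dunford--Pettis does not apply to them. The argument that actually works replaces the telescoping formula by the concavity inequality $E(\phi_j)-E(\phi)\geq \int_X(\phi_j-\phi)\MA(\phi_j)$, which involves only the top power, and then shows $\int_X(\phi_j-\phi)\,d\MA(\phi_j)\to 0$. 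Even for that single term, weak $L^1$-compactness of the densities $f_j=d\MA(\phi_j)/d\mu_0$ is not enough, because the other factor $\phi_j-\phi$ varies with $j$: one needs the H\"older--Young duality between the Orlicz classes $L\log L$ (where the entropy bound places $f_j$) and $\exp L$ (where the uniform Skoda inequality places $\phi_j-\phi$, upgrading $L^1$-convergence to convergence in that norm). Your proposal omits the Skoda input entirely. Second, your claim that the entropy control by itself yields weak convergence $\MA(\phi_j)\to\MA(\phi)$ is circular as stated: uniform integrability gives weak $L^1$-compactness of the measures, but identifying the weak limit with $\MA(\phi)$ requires the strong ($d_1$) convergence you are in the process of proving. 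The correct order is to first establish $E(\phi_j)\to E(\phi)$, deduce $d_1$-convergence and hence $\MA(\phi_j)\to\MA(\phi)$, and only then apply lower semicontinuity of the relative entropy to conclude $H(\MA(\phi)\vert\mu_0)\leq C$.
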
 

Since $g >0$ it is sufficient to control $H(\MA_g(\phi)\vert \mu_0)$. 
In view of the formula (\ref{Chen type formula}), the entropy bound is reduced to control $M_g$. 
Indeed we may control the last two terms in (\ref{Chen type formula}) by $E$, observing (\ref{E_g vs E}) and the elementary estimate 
\begin{equation}
(n+1)E(\phi) \leq \int_X (\phi-\phi_0) \MA(\phi) \leq E(\phi). 
\end{equation} 
The convexity of the modified K-energy now implies 
\begin{align}\label{convexity}
M_\eta (\phi_j^t) 
\leq \frac{t}{-E(\phi_j)} M_\eta (\phi_j) 
 \leq \frac{t}{-E(\phi_j)} (\e_j J (\s^*\phi_j)-j). 
\end{align} 
In particular for $\s=\id$ we obtain the bound of $M_\eta (\phi_j^t)$. 
It follows from Theorem \ref{strong compactness} that for each fixed $T$, $\phi_j^t$ $(0 \leq t \leq T)$ is contained in a compact subset with respect to the strong topology. 
The geodecity as well implies 
\begin{equation*}
d_1(\phi_j^t, \phi_j^s) =d_1(\phi_j^1, \phi_0) \abs{t-s}
\leq C(J(\phi_j^1)+1) \abs{t-s} 
\end{equation*}
for any $t, s \geq 0$. 
By Ascoli's theorem, passing through a subsequence if necessary, we conclude that $\phi_j^t$ strongly converges to $\phi^t$. 
It is immediate from $E(\phi_j^t)=-t$ that $E(\phi^t)= -t$.

\subsection{Demailly type approximation}\label{Demailly type approximation}

The second step is to approximate $\phi^t$ constructed in the above by a sequence of test configurations. 
It is the non-Archimedean analogue of Demailly's approximation theorem for plurisubharmonic functions. 
Given $\phi^t$, the relation (\ref{compatibility of ray}) gives the singular $K$-invariant metric $\Phi$ on $L_{\A^1}$, defined over $X_{\A^1\setminus \{0\}}=\C^* \times X$. 
Since $\sup_X (\phi^t -\phi_0)=0$, the plurisubharmonicity uniquely extends $\Phi$ to $\A^1$. 
Now for a sufficiently large $m \in \N$ we take the multiplier ideal sheaf $\cJ(m\Phi)$ and the normalized blow-up $\rho_m\colon  \cX^{(m)} \to \A^1$, endowed with the exceptional divisor $E_m$ and the line bundle 
\begin{equation}
\cL^{(m)} := \rho_m^*L_{\A^1} - \frac{1}{m+m_0} E_m.  
\end{equation} 
We may show that $\cL^{(m)}$ is relatively semiample line bundle. See \cite{BBJ15}, Lemma $5.6$ for the proof. 
We may check that the test configuration $(\cX^{(m)}, \cL^{(m)})$ inherits the equivariant $G$-action, since $\cJ(m\Phi)$ is $G$-invariant. 
Note that the central fiber $\cX_0$ is the union of the strict transform $E_0$ of $X \times \{ 0\}$ and the exceptional divisor $E_m$.  
The $\G_m$-action of $(\cX^{(m)}, \cL^{(m)})$ is trivial on $E_0$ so that it commutes with the $G$-action. 

\begin{thm}[\cite{BBJ15}, Theorem $5.4$ and $6.4$ for the $T=\{\id\}$ case]\label{BBJ}
For the above test configurations constructed from $\phi^t$, we have 
\begin{align*}
&E_\eta^\NA(\cX^{(m)}, \cL^{(m)}) \geq \lim_{t \to \infty} \frac{E_\eta(\phi^t)}{t}, \\
&\lim_{m \to \infty} L^\NA(\cX^{(m)}, \cL^{(m)}) = \lim_{t \to \infty} \frac{L(\phi^t)}{t}.  
\end{align*}
\end{thm}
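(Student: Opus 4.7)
The plan is to adapt the proof of \cite{BBJ15}, Theorems 5.4 and 6.4 (which handle the $T=\{\id\}$, unmodified case) to the present $G$-equivariant, $\eta$-modified setting. Two observations make the extension essentially mechanical. First, $\phi^t$ is $K$-invariant as a $d_1$-limit of $K$-invariant smooth rays, so the singular metric $\Phi$ on $L_{\A^1}$ it induces via (\ref{compatibility of ray}) is $K$-invariant; since $\cJ(m\Phi)$ is a coherent algebraic ideal sheaf and $K$ is Zariski-dense in $G=K_\C$ (reductivity of $G$ is part of our standing stability hypothesis), the ideal is automatically $G$-invariant. Consequently $\cX^{(m)}$ and $\cL^{(m)}=\rho_m^*L_{\A^1}-\tfrac{1}{m+m_0}E_m$ inherit a $G$-action commuting with the $\G_m$-action on $\A^1$, so $(\cX^{(m)},\cL^{(m)})$ is $G$-equivariant. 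Second, since $g\ge 0$ the modified Monge--Amp\`ere energy $E_\eta=E_g$ is monotone (Theorem~\ref{property of E_g}), and by Corollary~\ref{slope of modified E} its non-Archimedean slope along any equivariant test configuration equals the radial energy of the associated psh geodesic ray.

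The $L^\NA$ convergence is independent of $\eta$ and follows BBJ essentially verbatim. The identity $L^\NA(\cX^{(m)},\cL^{(m)})=\lct_{(\bar\cX^{(m)},\cB)}(\cX^{(m)}_0)-1$ recasts the invariant as an integrability threshold of $e^{-m\Phi}$ near the central fiber. The upper bound $\limsup_m L^\NA\le \hat L(\Phi)$ is a direct Skoda--Demailly integrability estimate combined with $\hat L(\Phi)=\lim_t L(\phi^t)/t$. For the lower bound one uses the subadditivity $\cJ((m+m')\Phi)\supset \cJ(m\Phi)\cdot\cJ(m'\Phi)$: this forces the psh envelopes $\Phi^{(m)}:=\tfrac{1}{m+m_0}\log\sum |s_i^{(m)}|^2$, built from generators of $\cJ(m\Phi)$, to approach $\Phi$ strongly enough for their $L$-slopes to converge to $\hat L(\Phi)$.

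For the $E_\eta^\NA$ inequality, let $\tilde\phi^t_m$ denote the psh geodesic ray on $X_{\Delta^*}$ associated with $(\cX^{(m)},\cL^{(m)})$ via (\ref{associated weak geodesic ray}). The key geometric fact, identical to \cite{BBJ15}, Proposition~5.8, is the pointwise domination $\tilde\phi^t_m\ge \phi^t - C_m$ with $C_m$ independent of $t$: indeed $\Phi$ is itself a candidate for the Perron--Bremermann envelope that defines $\tilde\phi^t_m$, up to the bounded shift $C_m$ produced by the divisor correction $\tfrac{1}{m+m_0}E_m$. Applying monotonicity of $E_g$ gives $E_\eta(\tilde\phi^t_m)\ge E_\eta(\phi^t) - O(C_m)$, and dividing by $t$ and sending $t\to\infty$, together with Corollary~\ref{slope of modified E} on the left-hand side, yields
\begin{equation*}
E_\eta^\NA(\cX^{(m)},\cL^{(m)}) = \lim_{t\to\infty}\frac{E_\eta(\tilde\phi^t_m)}{t} \geq \lim_{t\to\infty}\frac{E_\eta(\phi^t)}{t}.
\end{equation*}

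The main obstacle is the domination $\tilde\phi^t_m\ge \phi^t - C_m$ itself, which is the technical core of BBJ and rests on Perron--Bremermann envelope theory for the singular family $\cX^{(m)}_\Delta$. The equivariance, which is the genuinely new element here, is however straightforward: one restricts all envelopes to $G$-invariant psh candidates, and the resulting envelope is still maximal because $\Phi$ is already $G$-invariant. The upgrade from $E$ to $E_\eta$ is routine once monotonicity of $E_g$ is invoked, since the weight $g$ is continuous and (by our hypothesis $m_X>0$) positive on the moment polytope $P$.
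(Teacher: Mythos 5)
Your overall architecture matches the paper's: reduce to \cite{BBJ15}, get the $G$-equivariance of $(\cX^{(m)},\cL^{(m)})$ from the $K$-invariance (hence $G$-invariance) of $\cJ(m\Phi)$, and conclude the $E_\eta^\NA$ inequality from monotonicity of $E_\eta$ together with Corollary \ref{slope of modified E}. The $L^\NA$ part and the equivariance discussion are fine. But there is a genuine gap in your justification of the key domination.

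You derive $\tilde\phi^t_m \ge \phi^t - C_m$ by asserting that ``$\Phi$ is itself a candidate for the Perron--Bremermann envelope that defines $\tilde\phi^t_m$, up to the bounded shift $C_m$ produced by the divisor correction $\tfrac{1}{m+m_0}E_m$.'' This is not correct as stated. To make $\rho_m^*\Phi$ into a competitor for the envelope on $\cL^{(m)}=\rho_m^*L_{\A^1}-\tfrac{1}{m+m_0}E_m$ you must subtract a metric on $\tfrac{1}{m+m_0}E_m$; the natural choice $\tfrac{1}{m+m_0}\log\abs{s_{E_m}}^2$ is an \emph{unbounded} correction, and the resulting metric on $\cL^{(m)}$ is psh only if the generic Lelong numbers of $\rho_m^*\Phi$ along each component $E_j$ of $E_m$ satisfy $\nu_{E_j}(\rho_m^*\Phi)\ge \tfrac{1}{m+m_0}\ord_{E_j}\cJ(m\Phi)$. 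The valuative criterion for multiplier ideals only gives the \emph{lower} bound $\ord_{E_j}\cJ(m\Phi)\ge m\,\nu_{E_j}(\Phi)-A(E_j)$; the upper bound you need is precisely the content of Demailly's approximation theorem, whose proof is the Ohsawa--Takegoshi extension theorem. So your envelope-candidacy argument either fails or silently presupposes the very estimate it is meant to replace.

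The correct mechanism --- and the one the paper uses, explicitly flagging Ohsawa--Takegoshi as ``the key point'' --- runs in the opposite direction: one builds the algebraic metric $\Phi_m=\tfrac{1}{m+m_0}\log\sum_i\abs{s_i}^2$ from generators of $\cJ(m\Phi)$, which \emph{is} automatically a genuine positively curved metric on $\cL^{(m)}_\Delta$ (its singularities are exactly those prescribed by $E_m$), and then Ohsawa--Takegoshi/Demailly yields $\Phi_m\ge \Phi-C_{m,r}$ on $\B(0,r)\times X$ with $C_{m,r}$ independent of $t$. Since any bounded positively curved metric on $\cL^{(m)}_\Delta$ computes the non-Archimedean slope, one gets
\begin{equation*}
E_\eta^\NA(\cX^{(m)},\cL^{(m)})=\lim_{t\to\infty}\frac{E_\eta(\phi_m^t)}{t}\ \ge\ \lim_{t\to\infty}\frac{E_\eta(\phi^t-C_{m,r})}{t}=\lim_{t\to\infty}\frac{E_\eta(\phi^t)}{t},
\end{equation*}
with no need to pass through the Perron--Bremermann envelope at all. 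Replace your candidacy argument with this and the rest of your proof goes through.
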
 

We need the modified $E_\eta$ in the above, however, the proof is the same as \cite{BBJ15}. 
Indeed, using Demailly's approximation theorem locally, we have the estimate 
\begin{equation}
\Phi_m \geq \Phi -C_{m, r}
\end{equation}
on the shrunken area $\B(0, r)\times X$. 
The constant $C_{m, r}$ is necessarily independent of $t$. 
Since the modified Monge-Amp\`ere energy is monotone, we apply Corollary \ref{slope of modified E} to obtain 
\begin{align*}
E_\eta^\NA(\cX^{(m)}, \cL^{(m)})
&= \lim_{t\to \infty} \frac{E_\eta(\phi^t_m)}{t} \\
&\geq \lim_{t\to \infty} \frac{E_\eta(\phi^t-C_{m, r})}{t}
=\lim_{t \to \infty}\frac{E_\eta(\phi^t)}{t}.
\end{align*}
The key point in the above is the Ohsawa-Takegoshi $L^2$-extension theorem \cite{OT87} used in Demailly's approximation. 

Let us next consider the upper bound of $D(\cX_m, \cL_m)$. 
With the convexity of the D-energy, the assumption (\ref{not coercive}) for $\s =\id$ immediately implies  
\begin{equation}
\lim_{t \to \infty} \frac{D_\eta (\phi^t)}{t} \leq  0. 
\end{equation} 
It then follows 
\begin{equation}\label{upper bound of L}
\begin{aligned}
\lim_{m \to \infty} L^\NA(\cX^{(m)}, \cL^{(m)})
&= \lim_{t \to \infty} \frac{L(\phi^t)}{t} \\
&=\lim_{t \to \infty} \frac{D_\eta(\phi^t) +E_\eta (\phi^t)}{t}
\leq \hat{E_\eta} (\Phi) =\langle \Phi, \eta \rangle. 
\end{aligned}
\end{equation} 
This is satisfactory for our purpose. 

We aim for the lower bound estimate of $L^\NA$ to get the contradiction. 
From now on we follow the strategy of \cite{Li19} modifying the original idea of \cite{BBJ15}.  
Let us take some $\mu_m \in \N_\Q$ by Lemma \ref{rationality} so that 
\begin{equation*}
J^\NA(\cX^{(m)}_{\mu_m}, \cL^{(m)}_{\mu_m}) 
= J^\NA_{T}(\cX^{(m)}, \cL^{(m)}) := \inf_{\mu \in N_\R} J^\NA(\cX^{(m)}. \cL^{(m)}) 
\end{equation*} 
holds. 
To subtract the convergent subsequence of $\mu_m$, let us serve another simple proof of the boundedness lemma in \cite{Li19}.  

\begin{lem}[\cite{Li19}, discussion in the section 5.4]\label{boundedness} 
The achievements $\mu_m$ is bounded in the vector space $N_\R$. 
Specifically, taking a norm of $N_\R$ we have a constant independent of $m$ such that 
\begin{equation*}
\abs{\mu_m} \leq C. 
\end{equation*} 
\end{lem}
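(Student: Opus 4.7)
The plan is to exploit the convexity of
$$A_m(\mu) := J_\eta^\NA(\cX^{(m)}_\mu, \cL^{(m)}_\mu)$$
on $N_\R$, together with the fact that its asymptotic (recession) function is $\mu \mapsto J_\eta^\NA(\mu)$—the non-Archimedean modified $J$-energy of the product configuration generated by $\mu$—which is strictly positive on $N_\R / \R \cdot 1$ and, crucially, is independent of $m$. First I would normalize: shifting $\mu$ by $c \cdot 1$ shifts both $L_0^\NA(\cX_\mu, \cL_\mu)$ and $E_\eta^\NA(\cX_\mu, \cL_\mu)$ by the same constant (using $\int_P G_{1+\eta} \DH_T = 1$), so $A_m$ descends to $N_\R / \R \cdot 1$. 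Pick each $\mu_m$ in its coset so that $\int_P G_{\mu_m} G_{1+\eta} \DH_T = 0$; under this normalization, the given norm on $N_\R$ restricted to the normalizing hyperplane is equivalent to the quotient norm, and it suffices to bound $|\mu_m|$ there.

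For the uniform upper bound, since $\mu_m$ is the minimizer,
$$A_m(\mu_m) \leq A_m(0) = J_\eta^\NA(\cX^{(m)}, \cL^{(m)}).$$
By Theorem~\ref{BBJ} and the normalization $\sup_X(\phi^t - \phi_0) = 0$, we have $L_0^\NA(\cX^{(m)}, \cL^{(m)}) \to \hat L_0(\Phi) = 0$ and $E_\eta^\NA(\cX^{(m)}, \cL^{(m)}) \to \hat E_\eta(\Phi)$, both finite. Hence $A_m(\mu_m) \leq C$ uniformly in $m$.

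For the lower bound, I would use the explicit description
$$A_m(\mu) = \max_{x \in P}\bigl[G^{(m)}(x) + G_\mu(x)\bigr] - \int_P G^{(m)} G_{1+\eta}\, \DH_T - \int_P G_\mu G_{1+\eta}\, \DH_T,$$
coming from Proposition~\ref{E in terms of action} applied to the twist $(\cX^{(m)}_\mu, \cL^{(m)}_\mu)$. Evaluating the max at the argmax of $G_\mu$ and bounding $G^{(m)}$ there by $\min_P G^{(m)}$ gives
$$A_m(\mu) \geq J_\eta^\NA(\mu) + \min_P G^{(m)} - E_\eta^\NA(\cX^{(m)}, \cL^{(m)}).$$
Combined with the upper bound and the positive-homogeneous lower bound $J_\eta^\NA(\mu) \geq c \, |\mu|$ on the normalization hyperplane (valid because $J_\eta^\NA > 0$ on the compact sphere in $N_\R / \R \cdot 1$), this yields $|\mu_m| \leq C'/c$ as soon as the error terms are uniformly bounded.

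The hard part is controlling $\min_P G^{(m)}$ uniformly in $m$, i.e.\ a uniform lower bound on $\inf_X \dot\phi^0_m$. The Demailly-type construction gives $\phi^t_m \leq \phi^t + C/m$ from above, but lower bounds are delicate since the limiting geodesic ray $\phi^t$ need not lie in $L^\infty$. My approach would be to argue by contradiction: if $\mu_m$ were unbounded, pass to a subsequence with $\mu_m/|\mu_m| \to \nu$, $|\nu| = 1$, and use the uniform convergence of $A_m(\lambda \nu)/\lambda$ to the recession $J_\eta^\NA(\nu) \geq c > 0$ on the compact sphere, together with the upper bound $A_m(\mu_m) \leq C$, to force a contradiction. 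This recasts the difficulty as uniform control of the asymptotic slopes of the convex functions $A_m$, which is more tractable than controlling $\min_P G^{(m)}$ pointwise and is the main content of the "simple proof" advertised.
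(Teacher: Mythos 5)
Your approach is genuinely different from the paper's, and its first half is fine: normalizing on $N_\R/\R\cdot 1$ and using $A_m(\mu_m)\leq A_m(0)=J_\eta^\NA(\cX^{(m)},\cL^{(m)})$, which is uniformly bounded via Theorem \ref{BBJ} and the normalizations $E(\phi^t)=-t$, $\sup_X(\phi^t-\phi_0)=0$, is correct and parallels the paper's observation that $J^\NA_T(\cX^{(m)},\cL^{(m)})\leq J^\NA(\cX^{(m)},\cL^{(m)})=-E^\NA(\cX^{(m)},\cL^{(m)})\leq 1$. The gap is exactly where you flag it, and your proposed workaround does not close it. The inequality $A_m(\mu)\geq J_\eta^\NA(\mu)+\min_P G^{(m)}-E_\eta^\NA(\cX^{(m)},\cL^{(m)})$ requires $\min_P G^{(m)}=\inf_X\dot\phi^0_m$ to be bounded below uniformly in $m$, and the available data (uniform bounds on $\max_P G^{(m)}$ and on $\int_P G^{(m)}\,\DH_T$) do not imply this: a concave function on $P$ with bounded maximum and bounded mean can still plunge arbitrarily far down near the boundary of $P$. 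The recession-function contradiction then fails for a structural reason: since $\nu_m=\mu_m/\abs{\mu_m}$ lies on the segment from $0$ to the minimizer $\mu_m$, convexity gives $A_m(\nu_m)\leq A_m(0)$, i.e.\ an \emph{upper} bound on $A_m$ along that segment, whereas you need a lower bound; and the convergence $A_m(\lambda\nu)/\lambda\to J_\eta^\NA(\nu)$ is uniform in $m$ precisely when the missing bound on $\min_P G^{(m)}$ holds, so nothing has been gained. A model counterexample to the purely convex-analytic claim is $f_m(x)=\e\max(0,\abs{x}-R_m)$: non-negative, $f_m(0)=0$, recession function $\e\abs{\cdot}>0$ on the unit sphere, yet it admits minimizers at distance $R_m\to\infty$.

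The paper's proof sidesteps the issue entirely by never estimating $\inf_X\dot\phi^0_m$. It bounds $J^\NA(-\mu_m)$, which is comparable to $\abs{\mu_m}$ on $N_\R/\R$, via the pseudo-triangle inequality (\ref{pseudo-triangle inequality}) for the I-functional applied to the rays: $c_n I(\psi^0_{(-\mu_m)},\psi^0)\leq I(\psi^0_{(-\mu_m)},\psi^t)+I(\psi^t,\psi^0)=I(\psi^0,\psi^t_{\mu_m})+I(\psi^t,\psi^0)$, whose slopes in $t$ are controlled by $J_T^\NA(\cX^{(m)},\cL^{(m)})$ and $J^\NA(\cX^{(m)},\cL^{(m)})$ respectively, both uniformly bounded as above. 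To salvage your route you would have to prove a uniform lower bound on $\inf_X\dot\phi^0_m$ in the Demailly-type approximation, which is not available; I recommend adopting the I-functional argument instead.
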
 

\begin{proof}
It is sufficient to bound $J^\NA (-\mu_m)$ or equivalent I-functional 
\begin{equation*}
\lim_{t \to \infty} \frac{I(\mu_m(e^{t})^*\psi^0, \psi^0)}{t}
\end{equation*}
for some fixed $\psi^0$. See (\ref{I-functional}) for the definition. 
We may write $\mu_m(e^{t})^*\psi^0=\psi^0_{(-\mu_m)}$ if regard $\psi^0$ as the constant ray. 
Let $\psi^t$ be the associated ray with $(\cX^{(m)}, \cL^{(m)}$).  
Our trick using the pseudo-triangle inequality (\ref{pseudo-triangle inequality}) is 
\begin{align*}
c_n I(\psi^0_{(-\mu_m)}, \psi^0)
&\leq I(\psi^0_{(-\mu_m)}, \psi^t) + I(\psi^t, \psi^0) \\
&= I(\psi^0, \psi^t_{\mu_m}) +I(\psi^t, \psi^0). 
\end{align*} 
We may control the first term by $J^\NA_{T}(\cX^{(m)}, \cL^{(m)})$ and the second term by $J^\NA(\cX^{(m)}, \cL^{(m)})$. 
These two are bounded because 
\begin{equation*}
J^\NA(\cX^{(m)}, \cL^{(m)})
= -E^\NA(\cX^{(m)}, \cL^{(m)}) \leq \frac{-E(\phi^t)}{t}=1
\end{equation*}
and obviously $J^\NA_{T}(\cX^{(m)}, \cL^{(m)}) \leq J^\NA(\cX^{(m)}, \cL^{(m)})$. 
Notice that the similar argument was also applied to the proof of the slope formula. 
\end{proof} 

By the above lemma, taking a subsequence if necessary we may assume $\mu_m$ converges to some $\mu \in N_\R$. 
The next lemma crucial in the proof shows that the twisted ray $\phi_\mu^t$ is non-trivial. 
This is the point we use the assumption (\ref{not coercive}) for arbitrary $\s \in T$. 

\begin{lem}[\cite{Li19}, Corollary 5.3]\label{non-triviality} 
The radial J-energy of the twisted ray $\phi_\mu^t$ is strictly positive. Namely 
\begin{equation*}
\hat{J}_\eta(\Phi_\mu) >0. 
\end{equation*} 
\end{lem}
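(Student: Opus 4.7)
\medskip
\noindent\textbf{Proof proposal.}
My plan is a proof by contradiction: I assume $\hat{J}_\eta(\Phi_\mu)=0$ and derive a conflict with the failure of coercivity (\ref{not coercive}). Because $g>0$, Lemma \ref{J_g vs J} makes $J$ and $J_\eta$ equivalent, so the assumption is the same as $\hat{J}(\Phi_\mu)=0$. Unravelling the definition of the twisted ray by means of the identification (\ref{compatibility of ray}) applied to the twisted test configuration,
\begin{equation*}
\phi_\mu^t(x)=\Phi(\lambda(e^{-t})\mu(e^{-t})(x,1))=(\mu(e^{-t})^*\phi^t)(x),
\end{equation*}
so $\hat{J}(\Phi_\mu)=0$ translates into $J(\mu(e^{-t})^*\phi^t)=o(t)$. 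Since $\mu_m \to \mu$ and each $\mu_m$ realizes $J_T^{\NA}$ on $(\cX^{(m)},\cL^{(m)})$ by Lemma \ref{rationality}, continuity of the slope in the one-parameter-subgroup variable shows that $\mu$ is the asymptotically optimal twist of $\Phi$; in particular one also obtains $J_T(\phi^t)/t\to 0$ along the limit ray.

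The next step is to transfer this sublinear decay back to the endpoints $\phi_j$ of the approximating geodesics $\phi_j^t$. Using strong $d_1$-convergence $\phi_j^t \to \phi^t$ at every fixed $t$, together with continuity of $J$ (and hence of the infimum $J_T$) in $d_1$, a diagonal extraction produces $\tau_j\to\infty$ with $J_T(\phi_j^{\tau_j})/\tau_j \to 0$. The key technical point is then to upgrade this to $J_T(\phi_j)/T_j\to 0$: because $J_T$ is the infimum over $T$ of the convex functions $J(\sigma^*\cdot)$ along the weak geodesic $\phi_j^s$, the slope formula of Theorem \ref{slope of J_T} together with the convergence $\mu_m\to\mu$ of Lemma \ref{boundedness} gives $J_T(\phi_j)/T_j$ asymptotically equal to $\hat{J}_T(\Phi)=\hat{J}(\Phi_\mu)=0$.

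Finally, I exploit the tightest form of the non-coercivity, $M_\eta(\phi_j)\leq \epsilon_j J_T(\phi_j)-j$, together with the convexity inequality (\ref{convexity}) at an interior time $t=T_j/2$,
\begin{equation*}
M_\eta(\phi_j^{T_j/2})\leq \tfrac{1}{2}M_\eta(\phi_j)\leq \tfrac{1}{2}\big(\epsilon_j J_T(\phi_j)-j\big).
\end{equation*}
By Step~2, $\epsilon_j J_T(\phi_j)=o(T_j)$, and by the strong convergence $\phi_j^{T_j/2}\to\phi^{T_j/2}$ combined with the lower semicontinuity of $M_\eta$ on the compact level set of Theorem \ref{strong compactness} (controlled by the entropy bound coming from (\ref{convexity}) and the formula (\ref{Chen type formula})), the left-hand side stays bounded below along a subsequence. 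This forces $j$ to be bounded, which is absurd.

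\medskip
\noindent\textbf{Main obstacle.} The crux is the second step: $J_T$ is only an infimum of convex functions and not itself convex, so comparing $J_T$ at the endpoint $\phi_j$ with its values along the limit ray requires combining the slope formula of Theorem \ref{slope of J_T} with the rationality and convergence results Lemma \ref{rationality}, Lemma \ref{boundedness}, in a way that commutes the $t\to\infty$ and $j\to\infty$ limits. The use of the \emph{whole} failure-of-coercivity hypothesis (over every $\sigma\in T$, not just $\sigma=\mathrm{id}$) enters precisely here, through the identification of the optimal shift with the limiting one-parameter subgroup $\mu$.
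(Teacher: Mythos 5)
Your strategy (contradiction from $\hat{J}_\eta(\Phi_\mu)=0$, transferred back to the endpoints and fed into a midpoint-convexity estimate) is genuinely different from the paper's, and its final step does not close. You bound $M_\eta(\phi_j^{T_j/2})\leq \tfrac12\bigl(\e_j J_T(\phi_j)-j\bigr)$ and then claim the left-hand side ``stays bounded below along a subsequence'' via Theorem \ref{strong compactness}. But the evaluation point $\phi_j^{T_j/2}$ escapes to infinity along the ray (since $T_j\to\infty$), so that compactness statement does not apply: the entropy bound extracted from (\ref{convexity}) at time $T_j/2$ degenerates linearly in $T_j$, and since $E(\phi_j^{T_j/2})=-T_j/2$ the best general lower bound available from (\ref{Chen type formula}) is of the form $M_\eta(\phi_j^{T_j/2})\geq -CT_j$. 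Combining this with your upper bound only yields $j\leq C'T_j+o(T_j)$, which is perfectly consistent with $j,T_j\to\infty$; no contradiction results. The intermediate ``transfer'' step $J_T(\phi_j)/T_j\to 0$, which you yourself flag as the main obstacle, is also not actually established — and it is not even needed for your inequality, because $\e_j J_T(\phi_j)\leq\e_j J(\phi_j)=\e_j T_j=o(T_j)$ holds automatically from the normalizations.

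The paper's proof runs the estimate in the opposite, and much shorter, direction. Nonnegativity of the relative entropy in (\ref{Chen type formula}) gives the elementary lower bound $M(\s^*\phi_j)\geq C-nJ(\s^*\phi_j)$ together with $M(\phi_j)=M(\s^*\phi_j)$; feeding this into the non-coercivity hypothesis (\ref{not coercive}) — crucially, for \emph{every} $\s\in T$, which is where the full strength of the hypothesis enters — yields
\begin{equation*}
J(\s^*\phi_j)\geq\frac{j+C}{n+\e_j}\longrightarrow+\infty
\end{equation*}
uniformly in $\s\in T$. This forces $J(\phi_\mu^t)\to+\infty$ along the twisted limit ray, and since $J$ has at most linear growth along geodesics, a zero radial slope would keep $J(\phi_\mu^t)$ bounded; hence $\hat{J}(\Phi_\mu)>0$, and $\hat{J}_\eta(\Phi_\mu)>0$ by Lemma \ref{J_g vs J}. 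You should replace the midpoint-convexity argument by this direct divergence estimate.
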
 
\begin{proof}
Here we follow the discussion of \cite{Li19}. 
In view of (\ref{J_g vs J}), it is sufficient to prove $\hat{J}(\Phi_\mu) >0$. 
We observe from (\ref{Chen type formula}) that 
\begin{align*}
M(\phi_j) =M(\s^*\phi_j) \geq C-nJ(\s^*\phi_j)
\end{align*} 
holds. The assumption (\ref{not coercive}) then yields 
\begin{align*}
J(\s^*\phi_j) \geq \frac{j+C}{n+\e_j} \to +\infty. 
\end{align*}
It implies $J(\phi_\mu^t) \to +\infty$. 
Since $J$ has linear growth along geodesics, 
we obtain $\hat{J}(\Phi_\mu) >0$.  
\end{proof}

The above two lemmas furnish the proof of the main theorem. 
Let us decompose the L-functional as 
\begin{align*}
L^\NA (\cX^{(m)}, \cL^{(m)}) 
= D_\eta^\NA(\cX^{(m)}, \cL^{(m)}) +E_\eta^\NA (\cX^{(m)}, \cL^{(m)}). 
\end{align*}
By the uniform stability (with in mind of (\ref{J_g vs J}) again) there exists some $\e'>0$ such that the right-hand side is not less than 
\begin{align*} 
&\e' J_\eta^\NA(\cX^{(m)}_{\mu_m}, \cL^{(m)}_{\mu_m}) + E_\eta^\NA (\cX^{(m)}, \cL^{(m)}) \\
&= \e' L_0^\NA (\cX^{(m)}_{\mu_m}, \cL^{(m)}_{\mu_m}) +(1-\e')E_\eta^\NA (\cX^{(m)}_{\mu_m}, \cL^{(m)}_{\mu_m}) - \langle \mu_m, 1+\eta\rangle . 
\end{align*}
As we may assume $1-\e' >0$, the second term is now controlled by Theorem \ref{BBJ}. 
Since the functional $L_0(\phi)$ is monotone in $\phi$, we may conclude 
\begin{align*} 
\lim_{m \to \infty} L^\NA (\cX^{(m)}, \cL^{(m)}) 
 & \geq \e' \hat{L_0}(\Phi_\mu) +(1-\e')\hat{E_\eta}(\Phi_\mu) - \langle \mu, 1+\eta\rangle  \\
 &= \e' \hat{J_\eta}(\Phi_\mu) + \hat{E_\eta}(\Phi). 
\end{align*}
By Lemma \ref{non-triviality} this is strictly greater than $\langle \Phi, \eta \rangle$ and it 
contradicts to \ref{upper bound of L}. 


\end{document}